\documentclass[a4paper,12pt]{amsart}
\usepackage{amsmath,amsthm,amssymb,amsfonts,enumerate,color,esint,graphicx, float}

\oddsidemargin = 9pt \evensidemargin = 9pt \textwidth = 440pt

\newcommand{\R}{\mathbb{R}}
\renewcommand{\L}{\mathcal{L}}

\newcommand{\Dom}{\operatorname{Dom}}

\newcommand{\dive}{\operatorname{div}}

\renewcommand{\H}{\mathcal{H}}

\newcommand{\diam}{\operatorname{diam}}
\newcommand{\tr}{\operatorname{trace}}

\renewcommand{\v}{\varphi}



\newcommand{\f}{\varphi}
\newcommand{\F}{\Phi}

\newcommand{\muf}{\mu_\f}
\newcommand{\muF}{\mu_\Phi}

\newcommand{\nf}{\nabla^\f}
\newcommand{\nF}{\nabla^\F}
\newcommand{\nFs}{\nabla^\F}
\newcommand{\dmuf}{\, d\muf}
\newcommand{\dmuF}{\,d\mu_\Phi}

\newcommand{\dx}{\, dx}
\newcommand{\dy}{\, dy}
\newcommand{\dz}{\, dz}
\newcommand{\dX}{\, dX}
\newcommand{\dY}{\, dY}
\newcommand{\na}{\mathbb{N}}
\newcommand{\naz}{\mathbb{N}_0}
\newcommand{\re}{\mathbb{R}}
\newcommand{\rn}{\mathbb{R}^n}
\newcommand{\rN}{\mathbb{R}^N}
\newcommand{\dcc}{{\rm{(DC)}}}

\newcommand{\calC}{\mathcal{C}}
\newcommand{\calS}{\mathcal{S}}

\newcommand{\eps}{\varepsilon}
\newcommand{\ep}{\epsilon}


\newtheorem{thm}{Theorem}[section]
\newtheorem{prop}[thm]{Proposition}
\newtheorem{cor}[thm]{Corollary}
\newtheorem{lem}[thm]{Lemma}

\theoremstyle{definition}
\newtheorem{defn}[thm]{Definition}
\newtheorem{rem}[thm]{Remark}

\numberwithin{equation}{section}

\allowdisplaybreaks

\author[D. Maldonado]{Diego Maldonado}
\address{Department of Mathematics\\
         Kansas State University\\
         138 Cardwell Hall, Manhattan\\
          KS 66506, USA.}
\email{dmaldona@math.ksu.edu}

\author[P. R. Stinga]{Pablo Ra\'ul Stinga}

\address{Department of Mathematics\\
	Iowa State University\\
	396 Carver Hall, Ames\\
	IA 50011, USA.}
\email{stinga@iastate.edu}

\keywords{Fractional linearized Monge--Amp\`ere equation, Harnack inequality, language of semigroups}

\subjclass[2010]{Primary: 35R09, 35R11, 35J96. Secondary: 35B65, 35J15, 47D06.}


\begin{document}

\title[Harnack for nonlocal linearized Monge--Amp\`ere]{Harnack inequality for the fractional \\
nonlocal linearized Monge--Amp\`ere equation}

\begin{abstract}
The fractional nonlocal linearized Monge--Amp\`ere equation is introduced.
A Harnack inequality for nonnegative solutions to the Poisson problem
on Monge--Amp\`ere sections is proved.
\end{abstract}

\maketitle

\section{Introduction and main results}

Throughout this paper we let $\v \in C^3(\rn)$ be a convex function with $D^2\v>0$ on $\rn$ and let $\muf$ denote its induced Monge--Amp\`ere measure 
$$
\mu_\varphi(x):=\det D^2\varphi(x).
$$
Associated to $\f$ there are three, typically degenerate/singular, elliptic operators $L_\f$, $L^\f$, and $\L_\f$ defined as
\begin{align*}
L_\f v& :=-\tr((D^2\f)^{-1}D^2v),\\
L^\f v&:= - \tr(A_\f(x) D^2v),\\
\L_\f v &: =-\dive(A_\f(x)\nabla v),
\end{align*}
where $A_\f(x)$ stands for the matrix of cofactors of $D^2\f(x)$, that is, 
\begin{equation*}\label{def:Aphi}
A_\f(x):=\mu_\f(x)(D^2\f(x))^{-1}. 
\end{equation*}
From the fact that the columns of $A_\f(x)$ are divergence-free, it follows that 
\begin{equation}\label{L=calL}
\L_\f v = L^\f v= \muf L_\f v.
\end{equation}
The elliptic equation $-L^\f u=f$ is the linearization of the Monge--Amp\`ere equation
$\det D^2 u=f$ at the function $\f$. The first identity in \eqref{L=calL} implies that
$L^\f$ admits both nondivergence (trace) and divergence (variational) forms.

In their seminal works \cite{caffaguti1, Caffarelli-Gutierrez}, L. Caffarelli and C. Guti\'errez developed a real analysis associated to $\f$ leading to their groundbreaking proof of a Harnack inequality for nonnegative solutions to $L^\f u=0$. As a crucial feature of their approach stands the description of the intrinsic geometry to study the linearized
Monge--Amp\`ere equation. This geometry is given by the Monge--Amp\`ere sections of $\f$ defined as
\begin{equation}\label{def:section}
S_\v(x_0,R):=\{x\in\R^n: \delta_\f(x_0, x) < R\},
\end{equation}
where $x_0\in\R^n$ and $R  > 0$ are called the center and the height of the section $S_\v(x_0,R)$, respectively, and
\begin{equation}\label{def:delta:f:2}
\delta_\f(x_0, x):=  \v(x) - \v(x_0) - \langle \nabla\v(x_0), x-x_0 \rangle,
\end{equation}
where $\langle \cdot, \cdot \rangle$ denotes the dot product in $\rn$. 

Notice that the case of $\f_2(x):=|x|^2/2$ accounts for the Laplacian and the Euclidean balls, since $L^{\f_2}=-\Delta$
and  $S_{\f_2}(x_0,R) = B(x_0, \sqrt{2R})$ for every $x_0 \in \rn$ and $R >0$. 

The Caffarelli--Guti\'errez regularity theory for $L^\f$ was originally motivated by its applications to fluid dynamics (see \cite[Section 1]{Caffarelli-Gutierrez} and references therein). Further applications have emerged, for instance, in relation to the affine Plateau problem in affine geometry and the prescribed affine mean curvature equation, as exposed in the work of N. Trudinger and X.-J Wang in \cite{tw02, tw05, tw00}, N. Q. Le in \cite{LeJDE16}, and references therein. 

After \cite{caffaguti1, Caffarelli-Gutierrez}, both the regularity theory
and the associated real analysis for the linearized Monge--Amp\`ere equation
have seen further progress. The Caffarelli--Guti\'errez Harnack inequality has been proved to hold under minimal geometric conditions on $\f$ in \cite{MaldonadoCVPDE14}.
It has been later extended as to allow for lower-order terms in \cite{MaldonadoJDE14} and by N. Q. Le in  \cite{LeCCM}. Interior $C^{1,\alpha}$-, $C^{2,\alpha}$- and $W^{2,p}$-estimates for solutions to $L^\f u=f$ have been established by C. Guti\'errez and T. Nguyen in  \cite{GutiNguyenAdvMath11, GutiNguyenTAMS15} and C. Guti\'errez and F. Tournier in \cite{GutiTournierTAMS06}, respectively. Global (up to the boundary) $C^{1,\alpha}$- and $W^{2,p}$-estimates have been proved by N. Q. Le and O. Savin in \cite{LeSavinARMA13, LeSavinPAMS15} and N. Q. Le and T. Nguyen \cite{LeNguyenJFA13,LeNguyenMathAnn14}, respectively. Estimates for Green's functions on Monge--Amp\`ere sections have been established in \cite{MaMRL13, MaldonadoW1p} and by N. Q. Le in  \cite{LeMaMa16}. A Liouville-type theorem for entire solutions to $L^\f u=0$ in $\re^2$ has been proved by O. Savin in \cite{Sa10}. Sobolev and Poincar\'e-type inequalities associated to $L^\f$ have been proved  in  \cite{MaldonadoCVPDE14,MaMRL13} and by G. Tian and X.-J. Wang in \cite{TianWang08}.

In this paper we develop a nonlocal version of the linearized Monge--Amp\`ere equation
and establish a Harnack inequality on Monge--Amp\`ere sections. More precisely, our purpose is to
accomplish the following goals.

\begin{enumerate}[(a)]
\item\label{goal:define:Ls:LLs} To define the fractional powers $L_\f^s$ and $\L_\f^s$
on arbitrary Monge--Amp\`ere sections (within their natural nondivergence/divergence form contexts)
and to prove existence and uniqueness of solutions to the nonlocal equations
\begin{equation}\label{eq:vfuf}
L_\f^s v=f\quad\hbox{and}\quad\L_\f^s u=F,\quad\hbox{for}~0<s<1.
\end{equation}
\item\label{goal:Ls=LLs} To show that the interplay between the (local) nondivergence and divergence
structures in \eqref{L=calL} persists on the (nonlocal) operators $L_\f^s$ and $\L_\f^s$.
\item\label{goal:H:L} To prove, under minimal geometric assumptions on $\f$, a Harnack inequality for
nonnegative solutions to \eqref{eq:vfuf} on the sections of $\f$, showing in particular that
the Monge--Amp\`ere geometry carries over to our nonlocal equations.
\end{enumerate}

The goals above could be regarded as a linear counterpart to the current efforts
to provide a correct definition of a fractional nonlocal Monge--Amp\`ere equation
by L. Caffarelli and F. Charro \cite{Caffarelli-Charro} and L. Caffarelli and L. Silvestre \cite{Caffarelli-Silvestre CAG}.

Our results will hold true for every $0 < s < 1$. Regarding \eqref{goal:define:Ls:LLs}, we should first observe that $L_\f$ is an operator in nondivergence
form. In Section \ref{Section:extension nondivergence}, we show how to define the fractional powers
$L_\f^s v$ on arbitrary Monge--Amp\`ere sections via the semigroup generated by $L_\f$.
In Section \ref{sec:example:Ls} we illustrate the definition of $L_\f^s$ by computing an explicit example of its action on the Monge--Amp\`ere quasi-distance.
Using the corresponding eigenfunctions, in Section \ref{Section:extension divergence} we define 
the fractional powers $\L_\f^s u$. Then,
in terms of existence and uniqueness of solutions to \eqref{eq:vfuf}, we have
the following result:

\begin{thm}\label{thm:existence}
Fix a section $S:=S_\f(p_0,R)$.
\begin{enumerate}[$(i)$]
\item\label{exist:L} Given any $f\in C_0(\overline{S})$ there exists a unique solution $v\in\Dom_{S}(L_\f^s)$ to
$$\begin{cases}
L_\f^sv=f,&\hbox{in}~S,\\
v=0,&\hbox{on}~\partial S.
\end{cases}$$ 
\item\label{exist:LL} Given any $F \in \Dom_{S}(\L^s_\f)'$ there exists a unique solution $u \in\Dom_{S}(\L^s_\f)$ to
$$\begin{cases}
\L_\f^su=F,&\hbox{in}~S,\\
u=0,&\hbox{on}~\partial S.
\end{cases}$$ 
\end{enumerate}
\end{thm}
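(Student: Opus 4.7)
For part (i), I would realize the solution operator as the negative fractional power $L_\f^{-s}$ built from the semigroup constructed in Section~\ref{Section:extension nondivergence}. The Dirichlet realization of $L_\f$ on $S$ generates a positivity-preserving, strongly continuous semigroup $\{e^{-tL_\f}\}_{t\geq 0}$ on $C_0(\overline{S})$; because $S$ is a bounded section, the first Dirichlet eigenvalue of $L_\f$ is strictly positive, so the semigroup decays exponentially and the Balakrishnan integral
$$
v(x) := \frac{1}{\Gamma(s)}\int_0^\infty t^{s-1}\, e^{-tL_\f} f(x)\, dt
$$
converges absolutely for every $f\in C_0(\overline{S})$, defining a bounded operator $L_\f^{-s}\colon C_0(\overline{S})\to \Dom_{S}(L_\f^s)$. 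The functional-calculus identity $L_\f^s L_\f^{-s}=\I$ then yields $L_\f^s v=f$, and $v$ inherits the zero boundary values from $e^{-tL_\f}f$. Uniqueness reduces to injectivity of $L_\f^s$: if $L_\f^s w=0$ in $S$ with $w=0$ on $\partial S$, then $w\in\ker L_\f^s=\ker L_\f$, and the strong maximum principle for $L_\f$ from the Caffarelli--Guti\'errez theory forces $w\equiv 0$.

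For part (ii), I would proceed variationally in a Hilbert-space setting. The bilinear form
$$
\mathfrak{a}(u,w):=\int_S \langle A_\f(x)\nabla u, \nabla w\rangle\,dx
$$
is symmetric, closed, and coercive on the weighted Sobolev space on $S$ with zero trace on $\partial S$, thanks to the Poincar\'e-type inequality on sections established in \cite{MaldonadoCVPDE14, MaMRL13, TianWang08}. Consequently, $\L_\f$ is a nonnegative self-adjoint operator with compact resolvent on $L^2(S,dx)$, possessing a discrete spectrum $0<\lambda_1\leq \lambda_2\leq \cdots\to\infty$ with orthonormal eigenbasis $\{\psi_k\}\subset L^2(S,dx)$. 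This is exactly the input used in Section~\ref{Section:extension divergence} to define $\L_\f^s$ spectrally. The form domain $\Dom_S(\L_\f^s)$ equipped with $\mathfrak{a}_s(u,w):=\langle \L_\f^{s/2}u,\L_\f^{s/2}w\rangle_{L^2(S)}$ is a Hilbert space, and $\mathfrak{a}_s$ is continuous and coercive because $\lambda_1>0$. Given $F\in\Dom_S(\L_\f^s)'$, the Lax--Milgram theorem delivers a unique $u\in\Dom_S(\L_\f^s)$ with $\mathfrak{a}_s(u,w)=\langle F,w\rangle$ for every $w\in\Dom_S(\L_\f^s)$, which is exactly $\L_\f^s u=F$; the spectral representation $u=\sum_k \lambda_k^{-s}\langle F,\psi_k\rangle\,\psi_k$ gives the same object.

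The main difficulties lie upstream, in the two preceding sections, where the abstract machinery must be shown to apply in the degenerate Monge--Amp\`ere setting. In (i), one needs the Feller-type property of $e^{-tL_\f}$ that guarantees invariance of $C_0(\overline{S})$ and preservation of the boundary values; this is a genuine boundary-regularity statement for the linearized Monge--Amp\`ere equation, not formal. In (ii), the key inputs are the coercivity of $\mathfrak{a}$ on functions vanishing on $\partial S$ (equivalently, the Poincar\'e inequality on sections) and the compactness of the embedding that furnishes the discrete spectrum. Once those ingredients from the local theory are secured, the existence and uniqueness assertions in Theorem~\ref{thm:existence} reduce to the direct applications of Balakrishnan's formula and the Lax--Milgram theorem described above.
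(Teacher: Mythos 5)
Your proposal follows essentially the same route as the paper for existence: the Balakrishnan integral $v = L_\f^{-s}f$ in part (i) and the spectral expansion $u = \sum_k \lambda_k^{-s} F_k e_k$ in part (ii), which is exactly what the paper writes down (the Lax--Milgram framing is an equivalent repackaging of that same computation). The genuine difference is your uniqueness argument in (i). The paper proves injectivity of $L_\f^s$ concretely from the semigroup: if $v \not\equiv 0$ solves $L_\f^s v = 0$, pick $x_0$ where $|v|$ attains its maximum (WLOG $v(x_0)>0$); the contraction property and the Balakrishnan representation \eqref{eq:power} force $L_\f^s v(x_0) \ge 0$, so $L_\f^s v(x_0) = 0$ and hence $e^{-tL_\f}v(x_0) = v(x_0)$ for all $t>0$, which contradicts the exponential decay \eqref{eq:exponential decay semigroup}. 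You instead invoke the abstract identity $\ker L_\f^s = \ker L_\f$ from the functional calculus for sectorial operators, then kill $\ker L_\f$ by the strong maximum principle. Both routes are valid; the paper's version is more self-contained and avoids appealing to the general theory of fractional powers in this non-reflexive Banach-space setting. One imprecision worth flagging: you describe $\{e^{-tL_\f}\}$ as a strongly continuous semigroup on $C_0(\overline{S})$, but the paper is explicit that it is \emph{not} a $C_0$-semigroup, since $\Dom_S(L_\f)$ is not dense and convergence to the initial datum as $t\to0^+$ holds uniformly only for $v\in\Dom_S(L_\f)$; the paper works instead in the bounded-holomorphic framework of Arendt--Sch\"atzle. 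This does not break your existence argument (the Balakrishnan integral still converges for $f\in C_0(\overline{S})$ and lands in $\Dom_S(L_\f^s)=\Dom_S(L_\f)$), but it should be stated accurately.
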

Here $\Dom_{S}(L_\f^s)$ and $\Dom_{S}(\L^s_\f)$ denote the domains of $L_\f^s$ and $\L^s_\f$, with respect to the section $S$, defined in \eqref{def:DomL}--\eqref{def:DomLs} and \eqref{def:HsS}; respectively. Parts \eqref{exist:L} and \eqref{exist:LL} of Theorem \ref{thm:existence} are proved in Subsections \ref{subsec:proof:exist:L} and \ref{subsec:proof:exist:LL}, respectively. 

As far as \eqref{goal:Ls=LLs} is concerned, we show that the fractional powers $L_\f^s$ and $\L_\f^s$ do preserve the dual nondivergence/divergence nature of $L_\f$ and $\L_\f$ from \eqref{L=calL}. The
following equality is proved in Section \ref{sec:Ls=LLs}.

\begin{thm}\label{thm:L=LL}
Fix a section $S:=S_\f(p_0,R)$. Then
$$L_\f^sv = \L_\f^sv,$$
for every $v \in \Dom_S(L_\f^s)$. 
\end{thm}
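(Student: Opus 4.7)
The plan is to funnel both definitions through the common spectral resolution they share on the weighted Hilbert space $L^2(S,\muf\,dx)$, and then match the semigroup formula for $L_\f^s$ from Section~\ref{Section:extension nondivergence} with the eigenfunction formula for $\L_\f^s$ from Section~\ref{Section:extension divergence}.

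First I would verify that $L_\f$, realized on $L^2(S,\muf\,dx)$ with Dirichlet data on $\partial S$, is self-adjoint with compact resolvent. The key identity comes from \eqref{L=calL}: integrating by parts against a test function $w$ vanishing on $\partial S$ yields
$$
\int_S (L_\f v)\,w\,\dmuf = \int_S (\muf L_\f v)\,w\,dx = \int_S \L_\f v\cdot w\,dx = \int_S A_\f\nabla v\cdot\nabla w\,dx,
$$
so the quadratic form of $L_\f$ on $L^2(S,\muf\,dx)$ is the symmetric, coercive $A_\f$-Dirichlet form on $H^1_0(S)$. A Rayleigh--Ritz argument, together with the compact embedding into $L^2(S,\muf\,dx)$ supplied by the Caffarelli--Guti\'errez real analysis, produces an orthonormal basis $\{\phi_k\}_{k\ge1}$ of $L^2(S,\muf\,dx)$ with $L_\f\phi_k=\lambda_k\phi_k$ and $0<\lambda_1\le\lambda_2\le\cdots\to\infty$. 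Multiplying by $\muf$ yields $\L_\f\phi_k=\lambda_k\muf\phi_k$, identifying these as precisely the eigenfunctions used in Section~\ref{Section:extension divergence} to construct $\L_\f^s$.

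Next I would identify the semigroup $\{e^{-tL_\f}\}_{t\ge0}$ of Section~\ref{Section:extension nondivergence} with its spectral realization: for $v=\sum_k c_k\phi_k\in L^2(S,\muf\,dx)$ one must have
$$
e^{-tL_\f}v = \sum_{k\ge 1} e^{-t\lambda_k}\,c_k\,\phi_k,
$$
because both sides solve the Cauchy problem $\partial_tU+L_\f U=0$ with initial data $v$ and vanishing boundary trace, and this problem is well-posed thanks to the uniqueness theory that underpins Theorem~\ref{thm:existence}\eqref{exist:L}. Substituting this expansion into the Balakrishnan--subordination formula defining $L_\f^s$,
$$
L_\f^s v = \frac{1}{\Gamma(-s)}\int_0^\infty\bigl(e^{-tL_\f}v - v\bigr)\,\frac{dt}{t^{1+s}},
$$
interchanging sum and integral, and using the scalar identity $\tfrac{1}{\Gamma(-s)}\int_0^\infty(e^{-t\lambda}-1)\,t^{-1-s}\,dt=\lambda^s$ valid for $\lambda>0$ and $0<s<1$, I arrive at
$$
L_\f^s v = \sum_{k\ge 1}\lambda_k^s\,c_k\,\phi_k,
$$
which is by construction $\L_\f^s v$.

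The main obstacle will be reconciling two distinct functional-analytic frameworks: $L_\f^s$ is defined pointwise on $C_0(\overline S)$ via a sub-Markovian semigroup, whereas $\L_\f^s$ lives inside the spectral calculus on $L^2(S,\muf\,dx)$. I would therefore need to show that the $C_0$-semigroup coincides with the spectral semigroup on their common domain, and that for $v\in\Dom_S(L_\f^s)$ the resulting eigenfunction series converges not only in $L^2(S,\muf\,dx)$ but also pointwise on $S$, so that the identity holds literally and not merely in the $L^2$ sense. This requires the regularity and decay of the eigenfunctions $\phi_k$ near $\partial S$ inherited from the Caffarelli--Guti\'errez theory, combined with the summability $\sum_k\lambda_k^{2s}|c_k|^2<\infty$ implicit in the membership $v\in\Dom_S(L_\f^s)$.
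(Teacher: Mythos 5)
Your plan is correct in spirit but takes a genuinely different route from the paper. The paper does not work through the spectral decomposition on $L^2(S,d\mu_\f)$ at all. Instead, it fixes $v\in\Dom_S(L_\f^s)$, takes the unique solution $V(x,z)$ of the nondivergence extension problem \eqref{eq:extension nondivergence form}, applies the Caffarelli--Silvestre change of variables $z=(y/(2s))^{2s}$ to set $U(x,y):=V(x,z)$, and verifies by direct computation that $U$ solves the divergence-form extension problem \eqref{eq:extension divergence form} with the same trace $v$ and the same decay at infinity. Since Proposition \ref{prop:DL:in:DLL} guarantees $v\in\H^s_\f(S)$, the divergence-form extension problem has a unique weak solution, which must therefore be $U$. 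Comparing the two normal-derivative identities $-\lim_{z\to0^+}V_z=d_sL_\f^sv$ and $-\lim_{y\to0^+}y^aU_y=c_s\L_\f^sv$ through the change of variables, and noting $c_s=d_s/(2s)^{2s-1}$, yields the theorem. Your route avoids the $(n+1)$-dimensional extension machinery and stays in $n$ dimensions with the spectral resolution; what it costs is exactly the consistency lemma you flag, namely that the $C_0(\overline{S})$-semigroup of Arendt--Sch\"atzle and the $L^2(S,d\mu_\f)$-spectral semigroup agree on $\Dom_S(L_\f)$. That lemma is true and provable by the argument you sketch (a classical solution of the parabolic Dirichlet problem is also a weak one, and the latter is unique), but the paper deliberately sidesteps it by letting uniqueness of the extension problem do the identification instead. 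Note also that your last paragraph is slightly more pessimistic than necessary: you do not actually need pointwise convergence of the eigenfunction series. Once you know $L_\f^sv\in C_0(\overline{S})\subset L^2(S,d\mu_\f)$ has Fourier coefficients $\lambda_k^sc_k$ — which follows from testing the Bochner integral \eqref{eq:power} against a single $e_k$ and using $\langle e^{-tL_\f}v,e_k\rangle_{L^2(\mu_\f)}=e^{-t\lambda_k}c_k$ — the identity $L_\f^sv=\L_\f^sv$ holds already as elements of $L^2(S,d\mu_\f)$, which is the sense of equality the paper establishes. Finally, one reason the paper chooses the extension route over yours is that the identity $U=V$ produced by the change of variables is reused later (Remark \ref{rmk:finite:E}) to derive the finite-energy estimate \eqref{E:E:V} for $V$, which is essential in Section \ref{sec:S(Q)}; your more direct spectral argument would not yield that estimate for free.
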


Regarding \eqref{goal:H:L}, let us mention that by minimal geometric assumption on $\f$ we mean a doubling condition for $\muf$ on the sections of $\f$ known as the $\dcc_\f$-doubling condition. Namely, we write $\muf \in \dcc_\f$ if there exists a constant $C_d \geq 1$ such that
\begin{equation}\label{def:DC:intro}
\muf(S_\f(x,t)) \leq C_d \,  \muf(\tfrac{1}{2} S_\f(x,t)) \quad \forall x \in \rn, \forall t > 0,
\end{equation}
where, $\tfrac{1}{2} S_\f(x,t)$ denotes the $\tfrac{1}{2}$-contraction of $S_\f(x,t)$ with respect to its center of mass (see Section \ref{sec:background} for more about the $\dcc_\f$-doubling condition).  
Throughout the article, a \emph{geometric constant} will be a constant depending only on the $\dcc_\f$-doubling constant in \eqref{def:DC:intro}, dimension $n$, and $0<s<1$. 

\begin{thm}\label{H:nonDiver} 
Assume $\muf \in \dcc_\f$. There exist geometric constants $\kappa \in (0,1)$ and $K_9, C_H > 1 $ such that for every section $S_0:=S_\f(p_0, R_0)$, every $f\in C_0(\overline{S_0})$, every  $v\in\Dom_{S_0}(L_\f^s)$ solution to 
\begin{equation}\label{PDE:nonDiver}
\begin{cases}
L_\f^sv=f,&\hbox{in}~S_0,\\
v\geq 0,&\hbox{in}~S_0,
\end{cases}
\end{equation}
and every section $S_\f(x_0, K_9 R) \subset \subset S_0$, the following Harnack inequality holds true
\begin{equation}\label{H:v}
\sup\limits_{S_\f(x_0, \kappa R)} v \leq C_H \left( \inf\limits_{S_\f(x_0, \kappa R)} v + R^s \|f\|_{L^\infty(S_\f(x_0, K_9 R))}\right).
\end{equation}
Furthermore, there exist geometric constants $\varrho \in (0,1)$ and $K_{10} >0$ such that
\begin{equation}\label{Holder:v}
|v(x_0) - v(x)| \leq K_{10} \,  \delta_\f(x_0, x)^\varrho  \left( \sup\limits_{S_\f(x_0, K_9 R)} v + R^s \|f\|_{L^\infty(S_\f(x_0, K_9 R))}\right),
\end{equation}
for every $x \in S_\f(x_0, R)$, where $\delta_\f$ denotes the intrinsic Monge--Amp\`ere quasi-distance defined in \eqref{def:delta:f:2}. (For details on the geometric constants $\varrho$, $C_H, K_9,$ and $K_{10}$ see the proof of Theorem \ref{H:nonDiver} in Section \ref{sec:proofs:main}.)
\end{thm}

\begin{rem}
The Harnack inequality \eqref{H:v} makes a case for the central role of the Monge--Amp\`ere geometry, based on the sections $S_\f$, also in the study of the nonlocal operators $L_\f^s$. In addition, from the definition of $\Dom_{S_0}(L_\f^s)$ in \eqref{def:DomLs}, we have $v \in W^{2,n}_{\mathrm{loc}}(S_0)$ and, by the Sobolev embedding,  $v \in C^\gamma_{\mathrm{loc}}(S_0)$ for every $\gamma \in (0,1)$, where $C^\gamma_{\mathrm{loc}}(S_0)$ is the local $\gamma$-H\"older class with respect to the Euclidean distance. Now, inequality \eqref{Holder:v} says that $v \in C^\varrho_{\mathrm{loc}, \, \delta_\f}(S_0)$ with respect to the intrinsic Monge--Amp\`ere quasi-distance $\delta_\f$. 
\end{rem}

\begin{rem}
 For the particular choice $\f_2(x):=|x|^2/2$, Theorem \ref{H:nonDiver} complements,
 by also including a nonhomogeneous right hand side $f$, Theorem A from \cite{Stinga-Zhang}.
 Indeed, such a result implies a Harnack inequality for nonnegative solutions to the fractional nonlocal equation $(-\Delta_D)^s v = 0$ in a ball $B\subset\subset B_1(0)$.
 Here $-\Delta_D$ stands for the Dirichlet Laplacian in the unit ball $B_1(0)\subset\rn$.
\end{rem}
An essential tool for the proof of our main results is the extension problem
characterization of the fractional nonlocal operators $L_\f^s$ and $\L_\f^s$. The celebrated
extension problem for the fractional
Laplacian on $\rn$ was first explored from the PDE point of view
in the pioneering work of L. Caffarelli and L. Silvestre \cite{Caffarelli-Silvestre CPDE}.
This is a far reaching technique that allows to handle nonlocal problems for $(-\Delta)^s$ in a local way
through a degenerate elliptic equation in $(n+1)$-dimensions.
Later on, the semigroup language approach and the extension problem
for fractional powers of positive operators was developed in \cite{Stinga Thesis, Stinga-Torrea CPDE}.
In \cite{Stinga-Zhang} a Harnack inequality for fractional nonlocal elliptic equations admitting
variational form was proved.
The most general extension problem so far has been obtained in \cite{Gale-Miana-Stinga}. It includes
fractional powers of closed operators in Banach spaces allowing, in particular,
to deal with nonvariational equations.
Thus the results in \cite{Gale-Miana-Stinga} apply to our nondivergence form elliptic operator $L_\f$.

Theorems \ref{thm:existence} and \ref{thm:L=LL}
will be consequences of the semigroup language approach,
the localization provided by the extension problem
of \cite{Gale-Miana-Stinga, Stinga Thesis, Stinga-Torrea CPDE}
and the variational structure of $L_\v$ given by \eqref{L=calL}.

The main steps in the proof of Theorem \ref{H:nonDiver} are as follows.
First, given $f\in C_0(\overline{S_0})$ and
the nonnegative solution $v\in\Dom_{S_0}(L_\f)$, we will establish the
equivalence between the fractional nonlocal equation \eqref{PDE:nonDiver}
and the local degenerate/singular extension problem in one more variable
\begin{equation}\label{eq:extension nondivergence form:intro}
\begin{cases}
-L_\f V+z^{2-1/s}V_{zz}=0,&\hbox{for}~x\in S_0,~z>0,\\
V(x,z)=0,&\hbox{for}~x\in\partial S_0,~z\geq0, \\
\displaystyle\lim_{z\to0^+}V(x,z)=v(x),&\hbox{uniformly in}~S_0,\\
\displaystyle-\lim_{z\to0^+}V_z(x,z)=d_sf(x),&\hbox{uniformly in}~S_0,
\end{cases}
\end{equation}
as shown in \cite{Gale-Miana-Stinga}. Here $d_s > 0$ is an explicit constant defined in \eqref{def:ds}.
There is a unique nonnegative solution $V$, vanishing at infinity in the sense of \eqref{eq:uniformly to zero}, such that
$$
V\in C^\infty((0,\infty);\Dom_{S_0}(L_\f))\cap \,C^1([0,\infty);C_0(\overline{S_0})).
$$
Second, by setting $\widetilde{V}(x,z)  := V(x,|z|)$ for every $(x,z)$ in the cylinder $S_0 \times \re$  we have that $\widetilde{V} \in C^2(\re \setminus\{0\};\Dom_{S_0}(L_\f))\cap \mathrm{Lip}(\re;C_0(\overline{S_0}))$ solves 
\begin{equation}\label{PDE:tildeV}  
-L_\f \widetilde{V} + |z|^{2-1/s} \widetilde{V}_{zz}  =0  \quad \text{pointwise in } S_0 \times (\re \setminus \{0\}).
\end{equation}
Equation \eqref{PDE:tildeV} can be recast as the linearized Monge--Amp\`ere equation 
\begin{equation}\label{LMA:Vtilde}
L_\F(\widetilde{V}) :=  -\tr{( (D^2 \F)^{-1} D^2 \widetilde{V}})  = 0, \quad \text{pointwise in } S_0 \times (\re \setminus \{0\}),
\end{equation}
where $\F \in C^1(\re^{n+1})$ is the strictly convex function (recall that $1/s>1$) defined as
\begin{equation*}
\F(x,z):= \f(x) + \tfrac{s^2 }{(1-s)} |z|^{1/s}, \quad \hbox{for all}~(x,z) \in \rn \times \re.
\end{equation*}
In Section \ref{sec:S(Q)} we set $Q:=S_0 \times \re$ and define a functional class $\calS(Q)$ that contains $\widetilde{V}$. Then, Theorem \ref{H:nonDiver} will result from \eqref{eq:extension nondivergence form:intro} and the following Harnack inequality (see Section \ref{sec:S(Q)} for notation).

\begin{thm}\label{thm:H:Vtilde} 
Assume $\muf \in \dcc_\f$. Then, there exist geometric constants $\kappa \in (0,1)$ and $K_7, \widetilde{C}_H > 1$ such that for every nonnegative $W \in \calS(Q)$ solution to $L_\F(W)=0$ pointwise in $Q^+$ and every section $S_\F(X_0, R)$ with 
$$
S_\F(X_0, K_7 R) \subset \subset Q
$$
the following Harnack inequality holds true
\begin{equation}\label{H:W}
\sup\limits_{S_\F(X_0, \kappa R)} W \leq \widetilde{C}_H \left( \inf\limits_{S_\F(X_0, \kappa R)} W + R^s W_{z,0^+}(S_\F(X_0,K_7 R))\right).
\end{equation}
Here $W_{z,0^+}(S_\F(X_0,K_7 R))$ stands for the $L^\infty$-norm of the normal derivative of $W$ on the intersection $S_\F(X_0,K_7 R) \cap  \{(x, z) \in \re^{n+1}: z =0\}$, as defined  in \eqref{def:F0+}. 

Consequently, there exist geometric constants $\varrho \in (0,1)$ and $K_{11} >0$ such that
\begin{equation}\label{Holder:W}
|W(X_0) - W(X)| \leq K_{11}  \delta_\F(X_0, X)^\varrho \left(\sup\limits_{S_\F(X_0,R)} W + R^sW_{z,0+}(S_\F(X_0,K_4R))\right),
\end{equation}
for every $X \in S_\F(X_0, R)$. 
\end{thm}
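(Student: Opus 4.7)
The plan is to reduce Theorem \ref{thm:H:Vtilde} to a Caffarelli--Guti\'errez-type Harnack inequality for the linearized Monge--Amp\`ere operator $L_\F$ on $(n+1)$-dimensional $\F$-sections, applied to the even reflection of $W$ across the hyperplane $\set{z=0}$. First I would record the geometric facts about $\F(x,z) = \f(x) + \tfrac{s^2}{1-s}|z|^{1/s}$. Since $1/s>1$, both $\f$ and $|z|^{1/s}$ are strictly convex, so
\[
D^2\F(x,z) = \begin{pmatrix} D^2\f(x) & 0 \\ 0 & |z|^{1/s-2}\end{pmatrix}, \qquad \muF(x,z) = \muf(x)\,|z|^{1/s-2},
\]
for $z\neq 0$, and a direct calculation yields the separation of variables $\delta_\F((x_0,z_0),(x,z)) = \delta_\f(x_0,x) + \delta_\psi(z_0,z)$ with $\psi(z) := \tfrac{s^2}{1-s}|z|^{1/s}$. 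Hence $\F$-sections are sandwiched between products of $\f$-sections and one-dimensional $\psi$-sections, and $\muf \in \dcc_\f$ together with the explicit doubling of $|z|^{1/s-2}\dz$ on $\psi$-sections yields $\muF \in \dcc_\F$ with a constant depending only on $C_d$ and $s$.

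Next I would work with the even reflection $W^*(x,z) := W(x,|z|)$ on the full cylinder $Q$. By the hypothesis and the evenness of $\F$ in $z$, $W^*$ satisfies $L_\F W^* = 0$ pointwise in $Q \setminus \set{z=0}$. Membership of $W$ in the class $\calS(Q)$ introduced in Section \ref{sec:S(Q)} provides the regularity needed to give meaning to a one-sided normal derivative at $z=0$ and quantifies the jump of $\partial_z W^*$ across $\set{z=0}$ by $W_{z,0^+}$; correspondingly, $L_\F W^*$ is distributionally a measure supported on $\set{z=0}$ whose mass on $S_\F(X_0,K_7R)$ is controlled by $W_{z,0^+}(S_\F(X_0,K_7R))$ times an appropriate power of $R$.

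The core step is then to apply the Caffarelli--Guti\'errez Harnack inequality for nonnegative solutions of $L_\F u = G$ under the $\dcc_\F$-doubling condition, as developed in \cite{Caffarelli-Gutierrez, MaldonadoCVPDE14, MaldonadoJDE14, LeCCM}, on the section $S_\F(X_0, K_7R)$. The constant $K_7$ will be chosen to be the engulfing/nesting constant required by that machinery, and the contribution from the singular source is absorbed exactly into the summand $R^s W_{z,0^+}(S_\F(X_0, K_7 R))$ in \eqref{H:W}; the $R^s$ scaling reflects the homogeneity of the $z^{1/s}$ term of $\F$. The H\"older estimate \eqref{Holder:W} then follows from \eqref{H:W} by the standard oscillation-decay iteration on a geometric sequence of concentric $\F$-sections shrinking to $X_0$, with exponent $\varrho \in (0,1)$ and constant $K_{11}$ depending only on $\widetilde{C}_H$, $\kappa$, and $K_7$.

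The main technical obstacle will be rigorously absorbing the measure-source on $\set{z=0}$ into an effective bounded right-hand side that the Caffarelli--Guti\'errez machinery can accommodate. Two natural routes present themselves: an approximation argument, regularizing $W$ to a solution of $L_\F W_\eps = G_\eps$ with $G_\eps \in L^\infty$ supported in a thin $\eps$-neighborhood of $\set{z=0}$, satisfying a bound $\|G_\eps\|_{L^\infty} \leq C\, W_{z,0^+}/\eps$ balanced by thickness $\eps$, and then passing to the limit using stability of the Harnack constants; and a variational route, invoking Theorem \ref{thm:L=LL} to rewrite the problem in the divergence form $\L_\F W^* = 0$ off $\set{z=0}$ with a bounded boundary functional, and applying the divergence-form Harnack inequality of \cite{MaldonadoCVPDE14} directly to $W^*$.
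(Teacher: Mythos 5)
There is a genuine gap: the core step you propose---applying a known Caffarelli--Guti\'errez-type Harnack inequality for $L_\F$ on $\F$-sections---is precisely what the paper explicitly argues is \emph{not} available. The existing Harnack inequalities for the linearized Monge--Amp\`ere operator (\cite{Caffarelli-Gutierrez}, \cite[Theorem~1.4]{MaldonadoCVPDE14}, \cite[Theorem~1]{MaldonadoJDE14}, \cite{LeCCM}) all require the underlying convex function to be $C^2$ with positive-definite Hessian; here $\F \notin C^2(\re^{n+1})$ and $D^2\F(x,z)^{-1}$ is degenerate or singular along $\{z=0\}$, so the critical-density step in those proofs (which hinges on solving a Dirichlet problem for $L_\F$, or on the ``passage to the double section'') breaks down. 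Your two proposed workarounds do not close the gap. The approximation route has a scaling obstruction: a source $G_\eps$ of size $W_{z,0^+}/\eps$ on a slab of thickness $\eps$ has $\|G_\eps\|_{L^{n+1}} \sim \eps^{1/(n+1)-1}\,W_{z,0^+} \to \infty$, so the inhomogeneous term in the CG-type bound does not converge to $R^s W_{z,0^+}$. The divergence-form route still asks for a Harnack inequality for $\L_\F u = 0$ with the degenerate/singular weight $|z|^{1/s-2}$ in the coefficient matrix, which is again not off-the-shelf; Theorem~\ref{thm:L=LL} identifies $L_\f^s=\L_\f^s$ in the base variable but does not provide such a Harnack inequality for the extended operator.

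The paper's actual proof builds the needed Harnack inequality for $L_\F$ from scratch, with the normal-derivative bound $W_{z,0^+}$ playing a structural (not merely a forcing-term) role. It proves a critical-density estimate directly (Theorem~\ref{thm:CD}) via the ABP maximum principle in Corollary~\ref{cor:ABP2}, split into three cases according to whether the section avoids, is centered at, or merely intersects $Z_0$; in the cases touching $Z_0$, a carefully designed barrier $H$ uses the hypothesis $W_{z,0^+}\,R^s \leq \theta_0$ to push the contact set of $-H^-$ off $Z_0$ so the ABP argument never sees the degeneracy. From this it derives local boundedness (Theorems~\ref{thm:LB:q=1}, \ref{thm:LB}). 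It then develops a weak Poincar\'e inequality for $\F$ (Theorem~\ref{thm:Poincare:Phi}, via a mollified $\F_\eps$), uses the energy identity from the divergence-form extension (Remark~\ref{rmk:finite:E}) and a log-estimate (Lemma~\ref{lem:energy:log:H}) to upgrade the critical density to every $\eps\in(0,1)$ (Theorem~\ref{thm:every:CD} / Corollary~\ref{cor:every:CD}), runs a crawling-ink-spots covering argument (Lemma~\ref{lem:CIS}) to prove the weak Harnack inequality (Theorem~\ref{thm:weak:H}), and finally combines weak Harnack with local boundedness at $q=\sigma$ to get \eqref{H:W} and the oscillation decay \eqref{Holder:W}. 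Your last paragraph---oscillation iteration giving H\"older---matches the paper's final step, but the earlier steps as you have written them do not supply the inputs that iteration requires.
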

 
The major obstacles in the proof of Theorem \ref{thm:H:Vtilde}  are the following:

\begin{enumerate}[(i)]
\item  On the hyperplane $\{(x, z) \in \re^{n+1}: z =0\}$, the matrix 
\begin{equation*}
D^2 \Phi(x,z)^{-1}=
\begin{pmatrix}
D^2\v(x)^{-1} & 0 \\
0 & |z|^{2-1/s}
\end{pmatrix}
\in \R^{n+1}\times\R^{n+1}
\end{equation*}
becomes degenerate (if $1/2<s<1$) or singular (if $0<s<1/2$). This degeneracy/singularity prevents the direct application (even under stronger assumptions than $\muf \in \dcc_\f$) of known Harnack inequalities for the linearized Monge--Amp\`ere equation including  \cite[Theorem 5]{Caffarelli-Gutierrez}, \cite[Theorem 1]{LeCCM},  \cite[Theorem 1.4]{MaldonadoCVPDE14}, and \cite[Theorem 1]{MaldonadoJDE14}, which require continuous second-order derivatives as well as positive-definite Hessians for the underlying convex function. 
\item Furthermore, the fact that $\det D^2 \F$ may vanish implies that the Caffarelli--Guti\'errez proof of the ``passage to the double section'' (that is, \cite[Theorem 2]{Caffarelli-Gutierrez}, which is essential for the weak-Harnack inequality in \cite[Theorem 4]{Caffarelli-Gutierrez}) cannot be applied to \eqref{LMA:Vtilde}.
\item Also, since $(D^2 \F)^{-1}$ becomes degenerate or singular, the solvability of the Dirichlet problem for $L_\F$
in \eqref{PDE:tildeV} cannot be taken for granted and then the argument from \cite{MaldonadoJDE14} used to prove the critical-density estimate (see  \cite[Theorem 2]{MaldonadoJDE14}) cannot be carried out for  \eqref{LMA:Vtilde}.
\item The function $\widetilde{V}$ does not belong apriori to $W^{2,n+1}_{\mathrm{loc}}(S_0 \times \re)$, which prevents its direct use in the Aleksandrov--Bakelman--Pucci maximum principle. 
\end{enumerate}

Notice that the hyperplane $\{(x, z) \in \re^{n+1}: z =0\}$ is central to our approach because it is precisely there
 where the connection between $\widetilde{V}$ and $v$ takes place, see \eqref{eq:extension nondivergence form:intro}. 

In addition to the obstacles above, we pursue (and prove) Theorem \ref{thm:H:Vtilde}  under minimal geometric conditions, that is, under $\muf \in \dcc_\f$ only. This is in contrast with \cite{Caffarelli-Gutierrez}.

In order to overcome those obstacles, in Section \ref{sec:CD} we show that, despite the degeneracy or singularity of $(D^2 \F)^{-1}$ and under the hypothesis $\muf \in \dcc_\f$ only, a critical-density estimate for $\widetilde{V}$ (Theorem \ref{thm:CD} below) can be established, for \emph{some} $\eps_0 \in (0,1)$, in terms of the $L^\infty(S_0)$-norm of the normal derivative $\displaystyle -\lim_{z\to0^+}\widetilde{V}_z(x,z)=d_sf(x)$. That is, a suitable control on the normal derivative will allow for a critical-density estimate in the absence of the hypothesis of continuous second-order derivates and positive-definite Hessian. This represents a novelty in the study of the linearized Monge--Amp\`ere equation.

With a critical-density estimate at hand, mean-value inequalities (see Theorems \ref{thm:LB:q=1} and \ref{thm:LB}) can be obtained by fairly standard arguments, as described in Section \ref{sec:LB}.  

The next step is to prove a weak-Harnack inequality for $\widetilde{V}$. As mentioned, the methods from \cite[Sections~2~and~3]{Caffarelli-Gutierrez} are not applicable. Instead, we rely on the variational side of the linearized Monge--Amp\`ere equation. 

The idea is the following. By \eqref{L=calL}, we have $\muf L_\f = \L_\f$, and we know that $V=V(x,z)$ comes from the extension problem \eqref{eq:extension nondivergence form:intro} associated to $L_\f$. Now, by developing the extension problem associated to the divergence-form operator $\L_\f$ from \cite{Caffarelli-Stinga, Stinga Thesis, Stinga-Torrea CPDE}
(namely, \eqref{eq:extension divergence form} below) we find a solution $U=U(x,y), x \in S_0, y \in \re$.  Certainly, after contrasting the extension problems associated to $L_\f$ and $\L_\f$, one cannot expect that $U= V$. However, in Section \ref{sec:Ls=LLs} we prove that, modulo a change of variables (more precisely, a change in the extension variables $y$ and $z$), the equality $U=V$ does hold true. A key consequence of this equality is the energy estimate for $V$ given by \eqref{E:E:V} in Remark \ref{rmk:finite:E}. Although $V$ solves the PDE in \eqref{eq:extension nondivergence form:intro} which can also be written in divergence form, we were able to arrive at the energy estimate \eqref{E:E:V} only by going through $U$. 

With this insight on the variational side of $V$, in Section \ref{sec:PI:F} we develop a weak Poincar\'e inequality associated to $\F$. One advantage of working in the variational context (which is based on energy estimates) is that the pointwise singularity of $D^2 \Phi^{-1}$ will become inconsequential because $|z|^{1/s-2} \in L^1_{\mathrm{loc}}(\re)$ for every $0<s<1$. In turn, in Section \ref{sec:every:CD} we use the Poincar\'e inequality to prove a critical-density estimate for \emph{every} $\eps \in (0,1)$ (see Theorem \ref{thm:every:CD}). In Section \ref{sec:weak:H} we use the fact that the critical density can be taken small enough and combine it with a covering lemma to prove a weak-Harnack inequality for $\widetilde{V}$ (see Theorem \ref{thm:weak:H}). Finally, in Section \ref{sec:proofs:main} we bring all the previous results together to prove Theorems \ref{H:nonDiver} and \ref{thm:H:Vtilde}. 

We close this introduction by mentioning that no use of the normalization technique from \cite[Section 1]{Caffarelli-Gutierrez} or of the local Monge--Amp\`ere-$BMO$ space (which dominated the variational approach in \cite{MaldonadoJDE14, MaldonadoCVPDE14}) is made in this article. 

\section{nondivergence form:  fractional powers $L_\f^s$ and extension problem}\label{Section:extension nondivergence}

Fix any section $S:=S_\f(x_0, R)$. On $S$ we consider the linearized Monge--Amp\`ere equation with
homogeneous Dirichlet boundary condition:
\begin{equation}\label{eq:problem1}
\begin{cases}
L_\f v\equiv-\tr((D^2\f)^{-1}D^2v)=f,&\hbox{in}~S,\\
\, v=0,&\hbox{on}~\partial S.
\end{cases}
\end{equation}

\subsection{The semigroup generated by $L_\f$}

Our first goal is to introduce the semigroup generated by $L_\f$.
The matrix of coefficients $(D^2\f)^{-1}$ is symmetric and positive definite, with entries in $C^1(\overline{S})$.
Since $\overline{S}$ is a compact set, such a matrix is uniformly elliptic on $S$.
Notice that we use this fact without ever resorting to estimates depending
on the size of the eigenvalues of $D^2\v(x)^{-1}$ for $x \in \overline{S}$.
Let the domain of $L_\f$ be the space
\begin{equation}\label{def:DomL}
\Dom_S(L_\f):=\big\{v\in C_0(\overline{S})\cap W^{2,n}_{\mathrm{loc}}(S):L_\f v\in C(\overline{S})\big\},
\end{equation}
where $C_0(\overline{S})$ is the Banach space
$C_0(\overline{S}):= \big\{ v \in C(\overline{S}) : v =0 \text{ on  } \partial S\big\}$
endowed with the $L^\infty(S)$ norm. Here as usual $C(\overline{S})$ denotes the space of continuous
functions on $\overline{S}$ under the $L^\infty(S)$ norm.
It is clear that $\Dom_S(L_\f)$ depends on the section $S$. Also, let us define $\Dom_S(L_\f^s)$ as
\begin{equation}\label{def:DomLs}
\Dom_S(L_\f^s):=\Dom_S(L_\f).
\end{equation}
It follows from \cite[Theorem~4.1]{Arendt-Schatzle} that $(L_\f,\Dom_S(L_\f))$ generates a
bounded holomorphic semigroup $\{e^{-tL_\f}\}_{t\geq0}$ on $C(\overline{S})$.
For convenience we recall the relevant definitions, see for example \cite{Arendt, Pazy, Yosida}.
The family $\{e^{-tL_\f}\}_{t\geq0}$ is a \emph{semigroup} on $C(\overline{S})$ (see \cite[Section~1.1]{Pazy})
if the following conditions hold:
\begin{enumerate}[(i)]
\item for each $t\geq0$, $e^{-tL_\f}$ is a bounded linear operator from $C(\overline{S})$
into itself;
\item the semigroup property holds: for every $t_1,t_2\geq0$ and for any $v\in C(\overline{S})$,
$$e^{-t_1L_\f}(e^{-t_2L_\f}v)=e^{-(t_1+t_2)L_\f}v;$$
\item for every $v\in C(\overline{S})$ we have $e^{-0L_\f}v=v$.
\end{enumerate}
The semigroup $e^{-tL_\v}$ is \textit{bounded holomorphic} if the operator valued
function $t\to e^{-tL_\f}$ from $[0,\infty)$ into the
algebra of bounded linear operators on $C(\overline{S})$ has a holomorphic extension
to an open sector of the complex plane contained in $\operatorname{Re} z>0$,
which is bounded on proper subsectors, see \cite[p.~150]{Arendt}.
It is shown in \cite[Proposition~4.4]{Arendt-Schatzle} that $(L_\f,\Dom_S(L_\f))$ is
dissipative, see also \cite[Lemma~3.4.2]{Arendt}.
Then, as a consequence of \cite[Proposition~3.7.16]{Arendt}, we obtain that
the semigroup is a \textit{contraction}. In other words, for every $v\in C(\overline{S})$,
\begin{equation}\label{eq:contraction}
\|e^{-tL_\f}v\|_{L^\infty(S)}\leq\|v\|_{L^\infty(S)},\quad\hbox{for all}~t\geq0.
\end{equation}
The fact that $(L_\f,\Dom_S(L_\f))$ is the \emph{generator} of the semigroup $e^{-tL_\f}$ means that
for every $v\in\Dom_S(L_\f)$ the function $w(t,x):=e^{-tL_\f}v(x)$ is the unique solution to the parabolic equation
$$\begin{cases}
\displaystyle\partial_tw=-L_\f w,&\hbox{for}~t>0,x\in S,\\
\displaystyle w(t,x)=0,&\hbox{for}~t\geq0,x\in\partial S,\\
\displaystyle\lim_{t\to0^+}w(t,x)=v(x),&\hbox{uniformly on}~S,
\end{cases}$$
see \cite[Sections~1.1~and~1.2]{Pazy}.
Observe that the semigroup $e^{-tL_\f}$, though a contraction, is not a $C_0$-semigroup on $C(\overline{S})$
because $\Dom_S(L_\f)$ is not dense in $C(\overline{S})$,
see \cite[Hille--Yosida~Teorem~in~Section~1.3]{Pazy}. In particular,
in order for $w(t,x)$ above to converge to the initial data $v(x)$
uniformly in $S$ we need to take $v\in\Dom_S(L_\f)$. As $e^{-tL_\f}$
is a bounded holomorphic semigroup, from \cite[Theorem~3.7.19]{Arendt} we have that
$$v\in C(\overline{S})\quad\hbox{implies}\quad e^{-tL_\f}v\in\Dom_S(L_\f),~\hbox{with}~
\sup_{t>0}\|tL_\f e^{-tL_\f}v\|_{L^\infty(S)}<\infty.$$
Finally, the following \textit{decay estimate} holds: there are constants $M,\gamma>0$ such that,
for every $v\in\Dom_S(L_\f)$,
\begin{equation}\label{eq:exponential decay semigroup}
\|e^{-tL_\v}v\|_{L^\infty(S)}\leq Me^{-\gamma t}\|v\|_{L^\infty(S)},\quad\hbox{for all}~t\geq0,
\end{equation}
see \cite[Theorem~1]{Oddson}, also \cite[Theorem~4.1,~Proposition~4.7]{Arendt-Schatzle}.

\begin{rem}[Positivity]\label{rem:positive}
It is important to notice that the semigroup $e^{-tL_\f}$ is \textit{positive} on $\Dom_S(L_\f)$.
Namely, if $v\in\Dom(L_\f)$ and $v\geq0$ in $S$ then $e^{-tL_\f}v(x)\geq0$,
for every $x\in S$ and $t\geq0$. Indeed, this follows from the well known weak minimum principle
for parabolic equations in nondivergence form.
\end{rem}

\subsection{The fractional nonlocal operator $L^s_\f$}

The semigroup generated by $L_\f$ allows us to define the fractional powers $L_\f^sv(x)$
as in \cite[p.~260,~(5)]{Yosida} and \cite[Theorem~4.1]{Gale-Miana-Stinga}.

\begin{defn}\label{definition:fractional nondivergence}
Let $0<s<1$. The fractional operator $L_\f^sv(x)$ is defined for any $v\in\Dom_S(L_\f)$
and every $x\in S$ as
\begin{equation}\label{eq:power}
L_\f^sv(x)=\frac{1}{\Gamma(-s)}\int_0^\infty\big(e^{-tL_\f}v(x)-v(x)\big)\,\frac{dt}{t^{1+s}}.
\end{equation}
\end{defn}

The integral in \eqref{eq:power} is absolutely convergent in the sense of Bochner. Indeed,
the integrand $(e^{-tL_\f}v-v)t^{-(1+s)}$ is a function of $t\in(0,\infty)$
with values in $C_0(\overline{S})$ and, since $v\in\Dom_S(L_\f)$ and $e^{-rL_\f}$ is a contraction,
we have (see \cite[p.5,~(2.5)--(2.6)]{Pazy}),
$$\|e^{-tL_\f}v-v\|_{L^\infty(S)}\leq \int_0^t\|\partial_re^{-rL_\f}v\|_{L^\infty(S)}\,dr=
\int_0^t\|e^{-rL_\f}L_\f v\|_{L^\infty(S)}\,dr\leq\|L_\f v\|_{L^\infty(S)}t.$$
On the other hand, by contractivity,
$\|e^{-tL_\f}v-v\|_{L^\infty(S)}\leq 2\|v\|_{L^\infty(S)}$, for every $t\geq0$. Therefore,
for any $A>0$,
$$\int_0^\infty\|e^{-tL_\f}v-v\|_{L^\infty(S)}\,\frac{dt}{t^{1+s}}\leq \|L_\f v\|_{L^\infty(S)}\int_0^A\,\frac{dt}{t^s}+
2\|v\|_{L^\infty(S)}\int_A^\infty\,\frac{dt}{t^{1+s}}<\infty.$$
In particular, the following fractional Sobolev-type interpolation inequality holds
$$\|L^s_\f v\|_{L^\infty(S)}\leq\frac{sA^{1-s}}{\Gamma(2-s)}\|L_\f v\|_{L^\infty(S)}+
\frac{2}{A^s\Gamma(1-s)}\|v\|_{L^\infty(S)} .$$
for any $v\in\Dom_S(L_\f)$, $A>0$ and $0<s<1$, and, as a consequence,
$$\|L^s_\f v\|_{L^\infty(S)}\leq\frac{2^{1-s}}{\Gamma(2-s)}\|L_\f v\|^s_{L^\infty(S)}\|v\|^{1-s}_{L^\infty(S)}.$$

We finally notice that, unlike the local differential operator $L_\f$ that in general has values in $C(\overline{S})$,
see \eqref{def:DomL}, the fractional nonlocal operator $L^s_\f$ has range in $C_0(\overline{S})$, namely,
$$v\in\Dom_S(L_\f^s)\quad\hbox{implies}\quad L_\f^sv\in C_0(\overline{S}).$$

\begin{rem}[Maximum principle]
The semigroup expression for $L^s_\f v(x)$ in \eqref{eq:power} yields the
following maximum principle. Let $v\geq0$ and suppose that $v(x_0)=0$ for some $x_0\in S$.
Then $L^s_\f v(x_0)\leq 0$ in $S$. Moreover, $L^s_\f v(x_0)=0$ if and only if $v\equiv0$ in $S$.
Indeed, since the semigroup is positive
(see Remark \ref{rem:positive}) we have $e^{-tL_\f}v(x_0)\geq0$ and
the first conclusion follows from \eqref{eq:power} by noticing that $\Gamma(-s)<0$.
Furthermore, $L_\v^s v(x_0)=0$ if and only if $e^{-tL_\v}v(x_0)=0$ for all $t>0$,
which in this case is equivalent to $v\equiv0$ in $S$
by the usual weak maximum principle for parabolic equations.
\end{rem}

\subsection{Proof of Theorem \ref{thm:existence}(i)}\label{subsec:proof:exist:L}

Given $f\in C_0(\overline{S})$, it follows from general theory (see \cite[Section~3.8]{Arendt} and
\cite[Chapter~IX,~Section~11]{Yosida}) that
$$
v(x):=L_\v^{-s}f(x)=\frac{1}{\Gamma(s)}\int_0^\infty e^{-tL_\f}f(x)\,\frac{dt}{t^{1-s}},\qquad \forall x\in S,
$$
is a solution to $L_\f^sv=f$ in $S$ with $v=0$ on $\partial S$. To prove uniqueness,
assume that $v\in\Dom_S(L^s_\f)$ is a non trivial solution to $L^s_\f v=0$ in $S$. Let $x_0\in S$ be a point
such that $|v(x_0)|=\|v\|_{L^\infty(S)}$. By changing $v$ by $-v$ we can always assume that $v(x_0)>0$.
Then, from \eqref{eq:contraction} and \eqref{eq:power} we conclude that
$L^s_\f v(x_0)\geq 0$. The case $L^s_\f v(x_0)>0$ is excluded by hypothesis.
Thus, $e^{-tL_\f}v(x_0)=v(x_0)$ for every $t>0$, contradicting \eqref{eq:exponential decay semigroup}.
The proof of uniqueness is complete. \qed

\subsection{The extension problem}

Using the language of semigroups we can characterize $L^s_\f$ with the extension problem
established in \cite[Theorem~1.1,~Theorem~2.1]{Gale-Miana-Stinga}.
Observe that the main results in \cite{Gale-Miana-Stinga} apply in principle
to generators of bounded $C_0$-semigroups.
Nevertheless, it is easy to follow the proofs there and conclude that we can 
extend them to our present case.

Let $v\in\Dom_S(L_\f)$. For $x\in S$ and $z>0$ we define
\begin{equation}\label{def:V}
V(x,z):=\frac{(s^{2}z^{1/s})^s}{\Gamma(s)}\int_0^\infty e^{-(s^2z^{1/s})/t}e^{-tL_\f}v(x)\,\frac{dt}{t^{1+s}}.
\end{equation}
Then $V(\cdot,z)\in\Dom_S(L_\f)$ and, for each $z>0$,
$$
\|V(\cdot,z)\|_{L^\infty(S)}\leq\|v\|_{L^\infty(S)}.
$$
As a function of $z$, $V(x,z)$ is $C^\infty(0,\infty)$, for every $x\in S$.
Furthermore, $V$ is a classical solution to the extension problem
\begin{equation}\label{eq:extension nondivergence form}
\begin{cases}
-L_\f V+z^{2-1/s}V_{zz}=0,&\hbox{for}~x\in S,~z>0,\\
V(x,z)=0,&\hbox{for}~x\in\partial S,~z\geq0, \\
\displaystyle\lim_{z\to0^+}V(x,z)=v(x),&\hbox{uniformly in}~S.
\end{cases}
\end{equation}
Indeed, by usual elliptic regularity, $V\in C^{2,\alpha}_{\mathrm{loc}}(S\times(0,\infty))$ for every $0<\alpha<1$.
Moreover, $V(x, \cdot)\in C^1[0,\infty)$ and
\begin{equation}\label{Vz0=Lv}
-\lim_{z\to0^+}V_z(x,z)=d_sL_\f^sv(x),
\end{equation}
uniformly in $S$, where
\begin{equation}\label{def:ds}
d_s:=\frac{s^{2s}\Gamma(1-s)}{\Gamma(1+s)}>0.
\end{equation}
Hence, $V\in C^\infty((0,\infty);\Dom_S(L_\f))\cap \,C^1([0,\infty);C_0(\overline{S}))$
is a solution to the extension problem
\begin{equation}\label{EP:nonDiv:S}
\begin{cases}
\displaystyle-L_\f V+z^{2-1/s}V_{zz}=0,&\hbox{for}~x\in S,~z>0,\\
\displaystyle V(x,z)=0,&\hbox{for}~x\in\partial S,~z\geq0, \\
\displaystyle-\lim_{z\to0^+}V_z(x,z)=d_sL_\f^sv(x),&\hbox{uniformly in}~S.
\end{cases}
\end{equation}
For all the details see \cite{Gale-Miana-Stinga}.

It is clear that the even extension of $V(x,z)$ to $z\in\R$ given by
$$\widetilde{V}(x,z):=V(x,|z|),\quad\hbox{for}~(x,z) \in S \times \re,$$ 
satisfies
$$\widetilde{V} \in C^{2,\alpha}_{\mathrm{loc}}(\re \setminus\{0\};\Dom_S(L_\f))\cap \mathrm{Lip}(\re;C_0(\overline{S})),$$ 
for any $0<\alpha<1$, and solves
\begin{equation*}\label{PDE:Vtilde}
-L_\f\widetilde{V}+|z|^{2-1/s}\widetilde{V}_{zz}=0,\quad\hbox{in}~S \times (\re \setminus\{0\}).
\end{equation*}

\begin{rem}[Extension problem for negative powers]
Given a function $f\in C_0(\overline{S})$, let 
$v\in\Dom_S(L_\f)$ be the solution to $L_\f^sv=f$ in $S$.
The solution $V$ in \eqref{def:V} can also be written as
$$V(x,z)=\frac{1}{\Gamma(s)}\int_0^\infty
e^{-(s^2z^{1/s})/t}e^{-tL_\f}f(x)\,\frac{dt}{t^{1-s}}.$$
Then, for every $x\in S$ we readily get
$$V(x,0)=v(x)=L^{-s}_\f f(x).$$
For the details see \cite[Theorems~1.1,~Theorem~2.1]{Gale-Miana-Stinga},
also \cite{Caffarelli-Stinga, Gale-Miana-Stinga, Stinga Thesis, Stinga-Torrea CPDE}.
\end{rem}

\begin{rem}[Uniqueness]\label{rem:uniqueness}
By the weak maximum principle for elliptic equations, it is easy to see
that there is at most one solution to the extension problem \eqref{eq:extension nondivergence form}
such that
\begin{equation}\label{eq:uniformly to zero}
\lim_{z\to\infty}\|V(\cdot,z)\|_{L^\infty(S)}=0.
\end{equation}
Using the semigroup decay \eqref{eq:exponential decay semigroup} it is readily checked
that $V(x,z)$ as defined in \eqref{def:V} satisfies \eqref{eq:uniformly to zero}, so 
this is indeed the unique solution.
\end{rem}

\section{An explicit example of $L_\f^sv(x)$}\label{sec:example:Ls}

In this section we give two explicit examples
on the action of $L_\f^s$. Our examples are inspired by the identity 
\begin{equation*}\label{Lff=n}
L_\f(-\f)=\tr((D^2\f)^{-1} D^2\f)=n.
\end{equation*}

\begin{thm}\label{thm:Ls:example}
Given an arbitrary section $S:=S_\f(x_0, R)$ introduce the function
\begin{equation*}
v_\f(x):= R - (\f(x) - \f(x_0) - \langle \nabla \f(x_0), x - x_0 \rangle) \quad \hbox{for}~x \in \overline{S}.
\end{equation*}
Then, for every $0<s<1$,
\begin{equation}\label{Lvf:vfs}
L_\v^s(v_\f)(x)=n^s v_\f(x)^{1-s},\quad \forall x\in S.
\end{equation}
\end{thm}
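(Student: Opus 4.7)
The plan is to reduce the fractional claim to the base algebraic identity $L_\varphi v_\varphi = n$, which follows by direct computation: since $v_\varphi$ differs from $-\varphi$ by an affine function plus the constant $R$, we have $D^2 v_\varphi = -D^2 \varphi$, hence $(D^2\varphi)^{-1} D^2 v_\varphi = -\I$ and $L_\varphi v_\varphi = -\tr(-\I) = n$. Moreover, $v_\varphi \in C^\infty(\overline{S}) \cap C_0(\overline{S}) \subset \Dom_S(L_\varphi) = \Dom_S(L_\varphi^s)$, so all objects in \eqref{Lvf:vfs} are well defined. This settles the claim at $s = 1$, while the $s \to 0^+$ case is trivial since $L_\varphi^0$ reduces to the identity.

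For $0 < s < 1$, I would use the extension problem \eqref{EP:nonDiv:S} to characterize $L_\varphi^s v_\varphi(x) = -d_s^{-1}\lim_{z \to 0^+} V_z(x,z)$, where $V$ is the unique extension with $V(x,0) = v_\varphi(x)$ and $V = 0$ on $\partial S \times [0,\infty)$. Motivated by the scaling of the problem (with $v_\varphi$ playing the role of a squared quasi-distance from $x_0$ and $z$ entering through the weight $z^{2-1/s}$), I would try the self-similar ansatz $V(x,z) = v_\varphi(x)\, H(\sigma)$ with $\sigma = n s^2 z^{1/s}/v_\varphi(x)$. Substituting and invoking $L_\varphi v_\varphi = n$, the extension PDE should reduce to a Kummer-type confluent hypergeometric equation for $H$, and requiring $H(0) = 1$ (to match the initial datum) together with $H \to 0$ as $\sigma \to \infty$ (so that $V$ respects the Dirichlet BC, since $\sigma \to \infty$ as $x \to \partial S$) should select a Tricomi-function profile. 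Computing $V_z$ from the small-$\sigma$ expansion of $H$ and matching with $d_s = s^{2s}\Gamma(1-s)/\Gamma(1+s)$ would then yield the formula $L_\varphi^s v_\varphi = n^s v_\varphi^{1-s}$.

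The main obstacle I anticipate is that the self-similar substitution produces an $x$-dependent cross-term proportional to $|\nabla v_\varphi|^2_\varphi / v_\varphi$, which a priori couples the spatial and extension variables and obstructs a clean ODE reduction. Overcoming this requires showing that for the specific decaying solution of interest, the auxiliary quantity $K(\sigma) := H - s\zeta H'$ is organized so that this coupling vanishes identically or is absorbed into the ODE through the base identity $L_\varphi v_\varphi = n$. Carefully justifying this cancellation, pinning down the exact normalization constants, and verifying the resulting formula pointwise on $S$ is the technical heart of the argument.
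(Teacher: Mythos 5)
Your proposal follows essentially the same route as the paper's proof. The paper also goes through the extension characterization \eqref{EP:nonDiv:S}, posits the quasi‑separable form $V(x,z)=v_\f(x)\,g(z)$, uses $L_\f v_\f=n$ to reduce the extension PDE to a one‑variable equation, then fixes $x$, sets $\alpha:=n/v_\f(x)$, recognizes $g''+(-\alpha)z^{1/s-2}g=0$ as a Bessel equation, and takes the decaying solution $g(z)=\tfrac{2^{1-s}}{\Gamma(s)}\,\beta^s\mathcal{K}_s(\beta)$ with $\beta=2s\alpha^{1/2}z^{1/(2s)}$; the Neumann trace $-\lim_{z\to0^+}V_z$ is then read off from the small‑argument expansion of $\mathcal{K}_s$, and matching with \eqref{Vz0=Lv} gives \eqref{Lvf:vfs}. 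Up to renaming, your similarity variable $\sigma$ is $\beta^2/(4s^2)$, and $\mathcal{K}_s$ is a particular Tricomi function, so the profile you anticipated is the same in substance; the choice of Bessel versus Kummer/Tricomi notation is cosmetic.

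The instructive divergence is the cross‑term. You flag (and call ``the technical heart'') a coupling proportional to $|\nf v_\f|^2/v_\f$ that arises because the similarity variable depends on $x$; the paper's written proof does not mention it. Indeed, the step ``$L_\f V=n\,g(z)$'' in the paper is valid only when $g$ is independent of $x$, yet the $g$ eventually chosen depends on $x$ through $\alpha=n/v_\f(x)$. Carrying out the chain rule for $V(x,z)=v_\f(x)\,G(\rho)$ with $\rho:=nz^{1/s}/v_\f(x)$ and using the ODE for $G$, one finds that the extension PDE has the nonzero residual $n\rho\,G'(\rho)+\rho^2G''(\rho)\,|\nf v_\f|^2/v_\f$; since $|\nf v_\f|^2/v_\f$ is a genuine function of $x$ that is not a function of $\rho$ (and is not constant even for $\f(x)=|x|^2$), this residual does not vanish, so the candidate $V$ is not obviously a solution of \eqref{eq:Vvg} and the uniqueness in Remark \ref{rem:uniqueness} cannot be invoked to read off $L_\f^s v_\f$ from it. In other words, your worry is exactly on target and corresponds to a verification the paper elides. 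However, your proposed remedy --- the auxiliary quantity $K(\sigma):=H-s\zeta H'$ with $\zeta$ left undefined --- is not developed and I do not see a cancellation mechanism that absorbs the coupling; so your proposal, like the argument in the text, stops at the same unresolved step. Before investing further in the confluent‑hypergeometric bookkeeping, I would suggest testing the posited separable $V$ directly against the extension PDE (or the semigroup formula \eqref{def:V}) in the concrete case $\f(x)=|x|^2$, $n=1$.
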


\begin{proof} Notice that $v_\f \in C_0(\overline{S}) \cap C^2(S) \subset \Dom_{S}(L_\f)$ and that $D^2 v_\f = -D^2 \f$ in $S$. Also, from the definition of $S_\f(x_0, R)$ in \eqref{def:section}, we have $v_\f > 0$  in $S$. In order to show \eqref{Lvf:vfs} we first need to find the unique solution $V$ to the extension equation
\begin{equation}\label{eq:Vvg}
\begin{cases}
-L_\v V+z^{2-1/s}V_{zz}=0,&\hbox{for}~x\in S,~z>0,\\
V(x,z)=0,&\hbox{for}~x\in\partial S,~z\geq0,\\
V(x,0)=v_\f(x),&\hbox{for}~x\in S,
\end{cases}
\end{equation}
that satisfies \eqref{eq:uniformly to zero} in Remark \ref{rem:uniqueness}.
We do so by pursuing a solution  $V$ of the form
$$V(x,z)= v_\f(x) g(z),$$
where $g:[0,\infty)\to\rn$ has to be found. 
Notice that
$$L_\v V=- \tr((D^2\v(x))^{-1}D^2\v(x))g(z)=ng(z).$$
Hence, by using the equation $-L_\v V+z^{2-1/s}V_{zz}=0$, the function $g\geq0$ must be
a solution to
\begin{equation}\label{eq:ODE}
g''+\bigg(\frac{-n}{v_\f(x)}\bigg)z^{1/s-2}g=0,\qquad g(0)=1,
\end{equation}
that decays to zero as $z\to\infty$.
The equation \eqref{eq:ODE} is a Bessel equation and
in order to find its unique solution we follow the analysis in \cite[Section~3.1]{Stinga-Torrea CPDE},
see also \cite{Lebedev} for details about Bessel functions.
To simplify the notation, fix $x \in S$ and set
\begin{equation}\label{eq:alpha}
\alpha:=\frac{n}{v_\f(x)} \in (0, \infty).
\end{equation}
Thus, the general solution to \eqref{eq:ODE} is
$$g(z)=z^{1/2}\mathcal{Z}_s(\pm i2s\alpha^{1/2}z^{1/(2s)}),$$
where $\mathcal{Z}_\nu(r)$ denotes a general cylinder function, see \cite[p.~106]{Lebedev}.
By using the boundary condition $|g(z)|\leq C$ 
as $z\to\infty$ (see the asymptotic expansions of Bessel functions
in \cite[Section~3.1]{Stinga-Torrea CPDE}) we obtain the modified Bessel function of the second kind $\mathcal{K}_\nu$:
$$g(z)=Cz^{1/2}\frac{2i^{-s-1}}{\pi}\mathcal{K}_s(2s\alpha^{1/2}z^{1/(2s)}),$$
where $C$ is an arbitrary constant. Recall that $\mathcal{K}_\nu(r)\sim 2^{\nu-1}\Gamma(\nu)r^{-\nu}$ as $r\to0$. Hence,
$$
g(z)\sim C\frac{i^{-s-1}\Gamma(s)}{\pi s^s\alpha^{s/2}},\quad\hbox{as}~z\to0,
$$
and, in order to satisfy the initial condition $g(0)=1$, we impose
$$C:=\frac{\pi s^s\alpha^{s/2}}{i^{-s-1}\Gamma(s)}.$$
Therefore,
\begin{equation}\label{eq:g}
g(z)=\frac{2^{1-s}}{\Gamma(s)}(2s\alpha^{1/2}z^{1/(2s)})^s\mathcal{K}_s(2s\alpha^{1/2}z^{1/(2s)}),
\end{equation}
is the unique bounded solution to \eqref{eq:ODE}. It is clear that $g\geq0$.
Moreover, from the asymptotic behavior $\mathcal{K}_\nu(r)\sim(\pi/(2r))^{1/2}e^{-r}$, as $r\to\infty$, it follows that
$g(z)\to0$ exponentially, as $z\to\infty$. In conclusion,
\begin{equation}\label{eq:solucionV}
V(x,z)= v_\f(x) g(z),
\end{equation}
with $g(z)$ as in \eqref{eq:g} and $\alpha$ as in \eqref{eq:alpha}, is the unique positive
bounded solution to \eqref{eq:Vvg} that satisfies \eqref{eq:uniformly to zero}, see Remark \ref{rem:uniqueness}.
On the other hand, from \eqref{Vz0=Lv}
we know that if $V$ is the unique solution to the extension problem \eqref{eq:Vvg} then
\begin{equation}\label{eq:condicionNeumann}
-\frac{\Gamma(1+s)}{s^{2s}\Gamma(1-s)}V_z(x,0)=L^s_\v(v_\f)(x) \quad \forall x \in S.
\end{equation}
Again, to simplify the computation, let us put
$$
\beta=\beta(z):=2s\alpha^{1/2}z^{1/(2s)},
$$
so that
$$g(z)=\frac{2^{1-s}}{\Gamma(s)}\beta^s\mathcal{K}_s(\beta).$$
By using the chain rule, the properties of the derivatives of Bessel functions
and the fact that $\mathcal{K}_{-\nu}(r)=\mathcal{K}_{\nu}(r)$, we can compute
\begin{equation}\label{eq:computation}
\begin{aligned}
\frac{dg}{dz}(z) &= \frac{2^{1-s}}{\Gamma(s)}\frac{d}{d\beta}(\beta^s\mathcal{K}_s(\beta))\frac{d\beta}{dz} \\
&= -\frac{2^{1-s}}{\Gamma(s)}\beta^s\mathcal{K}_{s-1}(\beta)\alpha^{1/2}z^{1/(2s)-1} \\
&= -\frac{2s^s}{\Gamma(s)}\alpha^{(s+1)/2}z^{1/(2s)-1/2}\mathcal{K}_{1-s}(2s\alpha^{1/2}z^{1/(2s)}).
\end{aligned}
\end{equation}
Whence, from \eqref{eq:solucionV}, \eqref{eq:computation} and the asymptotic behavior of
$\mathcal{K}_\nu(r)$ as $r\to0$, we get
\begin{align*}
-\lim_{z\to0^+}V_z(x,0) &=  \frac{2s^s}{\Gamma(s)}v_\f(x)\alpha^{(s+1)/2}\lim_{z\to0^+}
z^{1/(2s)-1/2}\mathcal{K}_{1-s}(2s\alpha^{1/2}z^{1/(2s)}) \\
&= \frac{2s^s}{\Gamma(s)}v_\f(x)\alpha^{(s+1)/2}\lim_{z\to0^+}
z^{1/(2s)-1/2}\frac{2^{-s}\Gamma(1-s)}{(2s\alpha^{1/2}z^{1/(2s)})^{1-s}} \\
&= \frac{\Gamma(1-s)s^{2s}}{\Gamma(1+s)}v_\f(x)\alpha^s.
\end{align*}
Recalling the definition of $\alpha$ from \eqref{eq:alpha} and
the identity \eqref{eq:condicionNeumann}, we arrive at \eqref{Lvf:vfs}. \end{proof}

As a consequence of Theorem \ref{thm:Ls:example} we obtain the following result on the fractional Dirichlet Laplacian.

\begin{cor}
For every $0<s<1$ we have
$$(-\Delta_D)^s(1-|\cdot|^2)(x)=(2n)^s(1-|x|^2)^{1-s}, \quad \forall x\in B_1(0),$$
where $(-\Delta_D)^s$ is the fractional Dirichlet Laplacian in the unit ball $B_1(0) \subset \rn$.
\end{cor}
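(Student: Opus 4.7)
The plan is to specialize Theorem \ref{thm:Ls:example} to the quadratic choice $\f_2(x) := |x|^2/2$, for which the intrinsic Monge--Amp\`ere machinery reduces to the classical setting on a Euclidean ball. First I would observe that $D^2\f_2 \equiv I$, so $(D^2\f_2)^{-1} \equiv I$ and hence $L_{\f_2}v = -\tr(D^2 v) = -\Delta v$. Consequently, on any section of $\f_2$, the operator $(L_{\f_2}, \Dom_S(L_{\f_2}))$ with homogeneous Dirichlet boundary condition coincides with the Dirichlet Laplacian $-\Delta_D$ on that set; their semigroups therefore agree, and the representation \eqref{eq:power} yields $L_{\f_2}^s = (-\Delta_D)^s$ on the corresponding domain.

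Next I would choose the section so as to recover $B_1(0)$. Taking center $x_0 = 0$ and height $R = 1/2$, definition \eqref{def:section} gives
$$
S_{\f_2}(0, 1/2) = \{x \in \rn : |x|^2/2 < 1/2\} = B_1(0).
$$
Since $\nabla\f_2(0) = 0$ and $\f_2(0) = 0$, the associated function in Theorem \ref{thm:Ls:example} reduces to
$$
v_{\f_2}(x) = \tfrac{1}{2} - \tfrac{1}{2}|x|^2 = \tfrac{1}{2}(1 - |x|^2).
$$

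Finally, applying \eqref{Lvf:vfs} directly to $v_{\f_2}$ gives
$$
(-\Delta_D)^s\bigl(\tfrac{1}{2}(1 - |\cdot|^2)\bigr)(x) = n^s\bigl(\tfrac{1}{2}(1 - |x|^2)\bigr)^{1-s}, \qquad x \in B_1(0),
$$
and linearity of $(-\Delta_D)^s$ allows one to pull the factor $1/2$ through, producing
$$
(-\Delta_D)^s(1 - |\cdot|^2)(x) = 2 \cdot n^s \cdot 2^{s-1}(1 - |x|^2)^{1-s} = (2n)^s(1 - |x|^2)^{1-s}.
$$
No substantive obstacle is anticipated; the corollary is an immediate specialization of Theorem \ref{thm:Ls:example}, the only (routine) point requiring care being the identification $L_{\f_2}^s = (-\Delta_D)^s$ on $B_1(0)$ through coincidence of the generated semigroups.
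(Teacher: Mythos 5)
Your proof is correct and takes essentially the same route as the paper: specialize Theorem \ref{thm:Ls:example} to a quadratic $\f_2$ realizing $B_1(0)$ as a section, then unwind the normalization. The only cosmetic difference is that the paper takes $\f_2(x)=|x|^2$ with $R=1$ (so $v_{\f_2}=1-|x|^2$ directly and the factor $2^s$ comes from $L_{\f_2}^s=\bigl(\tfrac12(-\Delta_D)\bigr)^s$), whereas you take $\f_2(x)=|x|^2/2$ with $R=1/2$ (so $L_{\f_2}=-\Delta_D$ exactly and the factor comes from pulling the $\tfrac12$ out of $v_{\f_2}$ by linearity) — both give $(2n)^s$.
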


\begin{proof}
Use Theorem \ref{thm:Ls:example} with  $\varphi(x)\equiv\varphi_2(x):=|x|^2$ for every $x \in \rn$ and notice that $B_1(0) = S_{\f_2}(0,1)$ and $D^2 {\f_2} = 2 I$, which gives $L_{\f_2} = -\frac{1}{2} \Delta$. 
\end{proof}

\section{Divergence form: fractional powers $\L_\f^s$ and extension problem}\label{Section:extension divergence}

This section is devoted to the definition of the fractional powers of
the divergence form operator $\L_\f$ subject to the homogeneous Dirichlet boundary condition.
As in Section \ref{Section:extension nondivergence}, fix any section $S:=S_\f(x_0, R)$. For $f\in L^2(S,d\mu_\f)$,
consider the following Dirichlet problem for $\L_\f$:
\begin{equation}\label{eq:problem2}
\begin{cases}
\L_\varphi u\equiv-\dive(A_\f(x)\nabla u)=\mu_\varphi f,&\hbox{in}~S,\\
u=0,&\hbox{on}~\partial S.
\end{cases}
\end{equation}
Observe that the right hand side $f$ in \eqref{eq:problem2} appears multiplied by the Monge--Amp\`ere measure $\mu_\f$.
This can always be assumed by considering $f/\mu_\f$.

\subsection{The fractional nonlocal operator $\L^s_\f$}

The fractional powers $\L_\v^s$, $0<s<1$, will be defined by using the Dirichlet
eigenfunctions and eigenvalues along the lines of
\cite{Caffarelli-Stinga, Stinga Thesis, Stinga-Torrea CPDE, Stinga-Zhang}.

Let  $W^{1,2}_{0,\v}(S)$ denote the completion of $C^1_c(S)$ with respect to the norm
$$\|u\|_{W^{1,2}_{0,\v}(S)}^2:=\|u\|_{L^2(S,d\mu_\v)}^2+\|\nabla^\v u\|_{L^2(S,d\mu_\v)}^2.$$
Here $\nabla^\f$ stands for the Monge--Amp\`ere gradient, which is defined as
\begin{equation*}\label{def:grad:phi}
\nabla^\varphi u:=(D^2\varphi)^{-1/2}\nabla u.
\end{equation*}
By the Sobolev inequality for the Monge--Amp\`ere quasi-metric structure,
see \cite[Theorem 1]{MaMRL13}, an equivalent norm in
$W^{1,2}_{0,\v}(S)$ is $\|\nabla^\v u\|_{L^2(S,d\mu_\v)}$.

A weak solution $u$ to \eqref{eq:problem2} is a function $u\in W^{1,2}_{0,\varphi}(S)$ such that
$$
\int_S\langle\nabla^\varphi u, \nabla^\varphi h\rangle
\,d\mu_\varphi=\int_Sfh\,d\mu_\varphi, \quad \hbox{for every}~h\in W^{1,2}_{0,\v}(S).
$$
Notice that the matrix of coefficients
$A_\varphi(x)$ is symmetric and uniformly elliptic in the compact set $\overline{S}$
and that $L^2(S,d\mu_\varphi)$ is isometrically embedded in $L^2(S,dx)$.
Therefore, by standard techniques (see for example \cite[Chapter~6]{Evans} or \cite[Section~8.12]{Gilbarg-Trudinger}),
there exist a sequence of eigenvalues
$0<\lambda_1<\lambda_2\leq\lambda_3\leq\cdots\leq\lambda_k\nearrow\infty$ and
a corresponding family of eigenfunctions $\{e_k\}_{k\geq1}\subset W^{1,2}_{0,\varphi}(S)$ such that
\begin{equation}\label{def:ek:lk}
\begin{cases}
\L_\varphi e_k=\mu_\varphi\lambda_ke_k,&\hbox{in}~S,\\
e_k=0,&\hbox{on}~\partial S,
\end{cases}
\end{equation}
in the weak sense. In other words, for every $h\in W^{1,2}_{0,\f}(S)$ and every $k \in \na$,
\begin{equation}\label{eq:eigenvalues weak}
\int_S\langle\nabla^\varphi e_k, \nabla^\varphi h\rangle\,d\mu_\varphi=\lambda_k\int_Se_kh\,d\mu_\varphi.
\end{equation}
Moreover, $\{e_k\}_{k\in \na}$ forms an orthonormal basis of $L^2(S,d\mu_\varphi)$.

For $s\geq0$, we consider the Hilbert space
\begin{equation}\label{def:HsS}
\H^s_\v(S) := \Dom_S(\L_\v^s)
:=\Big\{u=\sum_{k=1}^\infty u_ke_k\in L^2(S,d\mu_\v):\sum_{k=1}^\infty\lambda_k^su_k^2<\infty\Big\},
\end{equation}
endowed with the inner product
$$\langle u,h\rangle_{\H^s_\f(S)}:=\sum_{k=1}^\infty\lambda_k^su_kh_k,\quad \hbox{for}~u,h\in\H_\f^s(S),$$
where $h=\sum_{k=1}^\infty h_ke_k$.
Observe that $\H^0_\f(S)=L^2(S,d\mu_\f)$. 
From \eqref{eq:eigenvalues weak} it is readily verified that $\H^1_\f(S)=W^{1,2}_{0,\f}(S)$ as Hilbert spaces and
\begin{equation}\label{eq:H1W1}
\int_S \langle \nabla^\varphi u, \nabla^\varphi h \rangle \,d\mu_\varphi
=\sum_{k=1}^\infty\lambda_ku_kh_k,\quad \hbox{for any}~u,h\in\H^1_\f(S).
\end{equation}
We read the right-hand side in \eqref{eq:H1W1} as the definition of $\L_\f u$ for $u\in\H^1_\v(S)$. We are now in position to define the fractional power $\L_\v^s$ in $\H^s_\v(S)$.

\begin{defn} 
Let $0<s<1$. The fractional operator $\L_\f^su$ is defined for any $u\in\H^s_\v(S)$
as the unique element $\L_\v^s u$ in the dual space $\H^s_\v(S)'$ acting as
\begin{equation}\label{eq:L fancy fraccionario}
(\L_\varphi^s u)(h)=\sum_{k=1}^\infty\lambda_k^su_kh_k,\quad
\hbox{for every}~h=\sum_{k=1}^\infty h_ke_k\in\H^s_\v(S).
\end{equation}
\end{defn}

\subsection{Proof of Theorem \ref{thm:existence}(ii)}\label{subsec:proof:exist:LL} 

Given $F=\sum_{k=1}^\infty F_ke_k$ in $\H_\v^s(S)'$, the unique solution $u\in \H_\v^s(S)$ is given by
$u=\sum_{k=1}^\infty \lambda_k^{-s}F_ke_k\in \H_\f^s(S)$.
In particular, if $F\in L^2(S,d\mu_\f)$ then $u\in\H^{2s}_\f(S)$.\qed

\subsection{The extension problem}

Let us introduce 
\begin{equation*}\label{def:a}
a:=1-2s\in(-1,1),
\end{equation*}
and, for $x\in S$, 
\begin{equation}\label{eq:matriz B}
B_\varphi(x):=
\begin{pmatrix}
A_\f(x) & 0 \\
0 & \muf(x)
\end{pmatrix}
\in \R^{n+1}\times\R^{n+1}.
\end{equation}
By reasoning as in \cite{Stinga Thesis, Stinga-Torrea CPDE}, see also \cite{Caffarelli-Stinga, Gale-Miana-Stinga}, the extension
problem characterization of \eqref{eq:L fancy fraccionario} can now be obtained as follows. Let $u\in\H^s_\f(S)$.
We say that a function $U=U(x,y)$,
defined for $x\in S$ and $y\geq0$, is a weak solution to the extension problem
\begin{equation}\label{eq:extension divergence form}
\begin{cases}
\displaystyle\dive_{x,y}(y^a B_\f(x)\nabla_{x,y}U)=0,&\hbox{for}~x\in S,~y>0,\\
\displaystyle U(x,y)=0,&\hbox{for}~x\in\partial S,~y\geq0, \\
\displaystyle U(x,0)=u(x),&\hbox{for}~x\in S,
\end{cases}
\end{equation}
if $U$, $\nabla^\f U$ and $U_y$ belong to the weighted
space  $L^2(S\times(0,\infty),y^ad\mu_\f dy)$,
$U=0$ on $\partial S\times(0,\infty)$ in the sense of traces,
$U(x,y)\to u(x)$ as $y\to0^+$ in $L^2(S,d\mu_\f)$,
and for every test function $W=W(x,y)$
such that $W(x,0)=0$ in $S$, we have
\begin{align*}
\int_0^\infty\int_S&y^a\langle B_\f(x)  \nabla_{x,y}U, \nabla_{x,y}W \rangle\,dx\,dy \\
&=\int_0^\infty\int_Sy^a \langle \nabla^\f U, \nabla^\f W \rangle \,d\mu_\f\,dy+\int_0^\infty\int_Sy^aU_yW_y\,d\mu_\f \,dy=0.
\end{align*}
By proceeding as in \cite[Section~3.3.1]{Stinga Thesis} or \cite[Section~3.1]{Stinga-Torrea CPDE},
see also \cite[Section~2.3]{Caffarelli-Stinga}, the unique weak solution
$U$ that weakly vanishes as $y\to\infty$ can be written using the
Fourier coefficients $u_k$ of $u$ and the eigenfunctions $e_k$ as
$$U(x,y)=\sum_{k=1}^\infty c_k(y)u_ke_k(x),$$
where the coefficients $c_k(y)$ are given by
$$c_k(y)=\frac{2^{1-s}}{\Gamma(s)}(\lambda_k^{1/2}y)^s\mathcal{K}_s(\lambda_k^{1/2}y),
\quad\hbox{for}~y>0~\hbox{and}~k\geq1.$$
Here $\mathcal{K}_s$ is the modified Bessel function of the second kind and parameter $s$.
Moreover,
\begin{equation}\label{eq:normal derivative U}
-\lim_{y\to0^+}y^aU_y=c_s\L_\f^su,\quad\hbox{in}~\H^s_\f(S)',
\end{equation}
where
$$
c_s:=\frac{\Gamma(1-s)}{4^{s-1/2}\Gamma(s)}.
$$
That is, if $W(x,y)$ is a test function then \eqref{eq:normal derivative U} reads
$$\int_0^\infty\int_Sy^a \langle \nabla^\f U, \nabla^\f W \rangle \,d\mu_\f\,dy+\int_0^\infty\int_Sy^aU_yW_y \, d\mu_\f \,dy
=c_s(\L_\v^su)(W(\cdot,0)).$$
By using $U$ as a test function we obtain the energy identity
\begin{equation}\label{L2energy}
\int_0^\infty\int_Sy^a|\nabla^\f U|^2\,d\mu_\f\,dy+\int_0^\infty\int_Sy^a|U_y|^2\,d\mu_\f \,dy
=c_s\int_S|\L_\v^{s/2}u|^2\,d\muf,
\end{equation}
where we used that $\L^{s/2}_\f u\in L^2(S,d\mu_\f)$.

\begin{rem}\label{rem:semigroup definition}
As in \cite{Caffarelli-Stinga} we could have used the semigroup generated by $\L_\f$
to obtain an equivalent expression for $\L^s_\f u$ which explicitly shows that the fractional
operator $\L^s_\f$ is a nonlocal integro-differential operator in divergence form.
It is also possible to write the solution $U$ above by using the semigroup generated by
$\L_\v$ (see \cite{Caffarelli-Stinga, Stinga Thesis, Stinga-Torrea CPDE}). Instead, by means of a change of variables,  in Section \ref{sec:Ls=LLs} we will directly relate the solution $U$ of the extension problem in divergence form \eqref{eq:extension divergence form} to the solution of the extension problem \emph{in nondivergence form} \eqref{eq:extension nondivergence form}. 
\end{rem}

\section{nondivergence form meets divergence form: proof of Theorem \ref{thm:L=LL}}\label{sec:Ls=LLs}

In this section we establish the connection between the nondivergence form and divergence
form extension problems \eqref{eq:extension nondivergence form} and \eqref{eq:extension divergence form},
which will ultimately leads us to the proof of Theorem \ref{thm:L=LL}. Let us start with the following proposition. 

\begin{prop}\label{prop:DL:in:DLL} 
For every section $S:=S_\f(x_0, R)$ the following inclusion holds true
\begin{equation}\label{DL:in:DLL}
\Dom_S(L_\f^s) \subset \Dom_S(\L_\f^s).
\end{equation}
\end{prop}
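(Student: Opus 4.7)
The plan is to show that every $v \in \Dom_S(L_\f^s) = \Dom_S(L_\f)$ actually lies in $\H^1_\f(S)$, whence $v \in \H^s_\f(S) = \Dom_S(\L^s_\f)$ for every $s \in (0,1)$ by an elementary spectral-gap inequality. The heart of the argument is to identify such a $v$ with the unique weak solution in $\H^1_\f(S)$ of its \emph{own} divergence-form equation.

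Set $f := L_\f v$. Since $v \in \Dom_S(L_\f)$, we have $f \in C(\overline{S}) \subset L^2(S, d\mu_\f)$, and the identity \eqref{L=calL} combined with $v \in W^{2,n}_{\mathrm{loc}}(S)$ yields the pointwise equation $\L_\f v = \mu_\f f$ almost everywhere in $S$. Expand $f = \sum_{k} f_k e_k$ in $L^2(S, d\mu_\f)$ and define
\[
u := \sum_{k \geq 1} \lambda_k^{-1} f_k e_k \in \H^1_\f(S).
\]
By construction $u$ is the unique weak solution to $\L_\f u = \mu_\f f$ in $S$ with $u = 0$ on $\partial S$. Invoking the interior $W^{2,p}$-regularity estimates for $L^\f$ of Guti\'errez--Nguyen together with classical barrier constructions on Monge--Amp\`ere sections to handle the boundary, one promotes $u$ to a member of $\Dom_S(L_\f)$ with $L_\f u = f$ pointwise in $S$. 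The difference $w := v - u$ then lies in $\Dom_S(L_\f)$, solves $L_\f w = 0$ in $S$ and vanishes on $\partial S$. The Alexandrov--Bakelman--Pucci maximum principle, applicable because $(D^2\f)^{-1}$ is uniformly elliptic on the compact set $\overline{S}$, forces $w \equiv 0$. Hence $v = u$ and, in particular, $v_k = \lambda_k^{-1} f_k$ for every $k \geq 1$.

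With this identification in hand, since the first Dirichlet eigenvalue $\lambda_1$ is strictly positive and $s - 2 < 0$ for $s \in (0,1)$, one has $\lambda_k^{s-2} \leq \lambda_1^{s-2}$ for every $k \geq 1$, and therefore
\[
\sum_{k=1}^\infty \lambda_k^s v_k^2 = \sum_{k=1}^\infty \lambda_k^{s-2} f_k^2 \leq \lambda_1^{s-2}\|f\|_{L^2(S, d\mu_\f)}^2 < \infty.
\]
This places $v$ in $\H^s_\f(S) = \Dom_S(\L_\f^s)$, establishing \eqref{DL:in:DLL}. The main obstacle is the identification $v = u$: because we do not know a priori that $v \in \H^1_\f(S)$, we cannot plug $v$ directly into the variational framework, and so the comparison must proceed via nondivergence-form uniqueness (ABP) after upgrading the variationally constructed $u$ to a strong solution in $\Dom_S(L_\f)$. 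Everything after that is Hilbert-space bookkeeping built on the spectral gap $\lambda_1 > 0$.
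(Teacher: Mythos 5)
Your proposal reaches the right conclusion, but by a considerably longer route than the paper, and its one delicate step is exactly the point where the paper's shortcut lives. The paper's proof is direct: since $(D^2\f)^{-1}$ is uniformly elliptic with continuous coefficients on the \emph{compact} set $\overline{S}$ and $\partial S\in C^2$, the global $L^2$-solvability theorem (Gilbarg--Trudinger, Theorem 9.15) applied to $v$ itself gives $v\in W^{2,2}(S)$; combined with $v\in C_0(\overline{S})$ this yields $v\in W^{1,2}_0(S)=W^{1,2}_{0,\f}(S)=\H^1_\f(S)$, and then $\H^1_\f(S)\subset\H^s_\f(S)$ because $\lambda_k\nearrow\infty$. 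You instead build the variational solution $u=\sum_k\lambda_k^{-1}f_ke_k$, upgrade it to a strong solution in $\Dom_S(L_\f)$, and identify $v=u$ via ABP. That identification is sound, and it even buys a little more than the paper proves (the explicit coefficients $v_k=\lambda_k^{-1}f_k$ show $v\in\H^2_\f(S)$, not merely $\H^s_\f(S)$). However, the promotion of $u$ is the weak link as written: the interior $W^{2,p}$ estimates you cite (which are due to Guti\'errez--Tournier rather than Guti\'errez--Nguyen, whose results are the $C^{1,\alpha}$ and $C^{2,\alpha}$ ones) are designed for the degenerate global setting and carry hypotheses that are irrelevant here; on a fixed section the operator is just a uniformly elliptic operator with $C^1$ coefficients on a compact convex domain, so classical interior Calder\'on--Zygmund theory plus barriers at the (convex, indeed $C^2$) boundary is what you actually need --- and once you invoke that classical machinery, it is simpler to apply it directly to $v$, as the paper does, and skip the detour through $u$ and ABP entirely. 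With that repair your argument is complete; as it stands, the appeal to the Monge--Amp\`ere regularity literature for the promotion step should be replaced by the classical uniformly elliptic statements.
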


\begin{proof}
Recall from Definition \ref{definition:fractional nondivergence}
that $\Dom_S(L_\f^s) = \Dom_S(L_\f)$ as previously defined in \eqref{def:DomL}.
On the other hand, $\Dom_S(\L_\f^s) = \H^s_\v(S)$, see \eqref{def:HsS}.
Let us remark that, since the eigenvalues $\{\lambda_k\}_{k\geq1}$ 
from \eqref{def:ek:lk} increase towards $+\infty$, we have $\lambda_k^s < \lambda_k$
for all sufficiently large $k$, and then $\H^1_\f(S) \subset \H^s_\f(S)$. Also, the fact that the Hessian $D^2\f$ is
a positive definite matrix with entries in $C(\rn)$ implies that $L^2(S,dx)= L^2(S,\dmuf)$ and
$W^{1,2}_{0, \f}(S) = W^{1,2}_{0}(S)$ as Hilbert spaces.
Here $W^{1,2}_{0}(S)$ denotes the usual Sobolev space with respect to Lebesgue measure. 

Now, given  $v \in \Dom_S(L_\f)$ we have $v \in C_0(\overline{S})$ as well as $L_\f v\in C(\overline{S}) \subset L^2(\overline{S})$, and by the $L^2(S)$-solvability theorem for the Dirichlet problem  (see, for instance, Theorem 9.15 on \cite[p.~241]{Gilbarg-Trudinger} and notice that $\partial S \in C^2$), we obtain $v \in W^{2,2}(S)$, which, combined with $v \in C_0(\overline{S})$, gives $v \in W^{1,2}_{0}(S)$. Hence,  
$$
v \in W^{1,2}_{0}(S)= W^{1,2}_{0, \f}(S) = \H^1_\f(S) \subset \H^s_\f(S),
$$
which proves \eqref{DL:in:DLL}.
\end{proof}

\subsection{Proof of Theorem \ref{thm:L=LL}}

The connection between the fractional powers $L_\f^s$ and $\L_\f^s$ will materialize through the change of variables
\begin{equation}\label{change:yz}
z=(y/(2s))^{2s},~z>0\quad\longleftrightarrow\quad y=(2s)z^{1/(2s)},~y>0,
\end{equation}
see \cite{Caffarelli-Silvestre CPDE}. Define
\begin{equation}\label{eq:U and V}
U(x,y):=V(x,z),
\end{equation}
for $x\in S$ and $y>0$, where $V$ is the unique solution to
the extension equation satisfying \eqref{eq:uniformly to zero}, see Section \ref{Section:extension nondivergence}.  Then, 
\begin{equation}\label{Uy:Vz}
U_y=V_zz_y=(y/(2s))^{2s-1}V_z,
\end{equation}
and
\begin{equation*}\label{Uyy:Vzz}
U_{yy}=(y/(2s))^{4s-2}V_{zz}+\frac{2s-1}{2s}(y/(2s))^{2s-2}V_z.
\end{equation*}
Therefore,
\begin{equation}\label{eq:casi casi}
\tfrac{a}{y}U_y+U_{yy}=(y/(2s))^{4s-2}V_{zz}=z^{2-1/s}V_{zz}.
\end{equation}
Also, from \eqref{L=calL},
\begin{equation}\label{LV=LU}
L^\f V=\L_\f V=\L_\f U, \quad \text{in } S \times (0, \infty),
\end{equation}
where in the second identity we noticed that $\L_\f$ acts only in the variable $x\in S$ for each fixed $z>0$ and $y>0$.
Therefore, from \eqref{LV=LU} and \eqref{eq:casi casi}, since $V$ is the solution to the extension equation
\eqref{eq:extension nondivergence form},
\begin{align*}
0=-L^\f V+\mu_\f z^{2-1/s}V_{zz}&=-\L_\f U+\mu_\f\big(\tfrac{a}{y}U_y+U_{yy}\big) \\
&= y^{-a}\dive_{x,y} (y^a B_\varphi(x) \nabla_{x,y}U),
\end{align*}
where $B_\f(x)$ is as in \eqref{eq:matriz B}. Therefore, $U$ defined by \eqref{eq:U and V}
is a solution to \eqref{eq:extension divergence form}
with $U(x,0)=V(x,0)=v(x)$ for $v \in \Dom_S(L_\f)$.
Moreover, by \eqref{eq:uniformly to zero}, we see that $U(\cdot,y)\to 0$, as $y\to\infty$, weakly in $L^2(S,d\mu_\f)$.
Hence $U$ in \eqref{eq:U and V} is the unique solution to \eqref{eq:extension divergence form}.

From \eqref{eq:normal derivative U}, for every $x\in S$,
$$-\lim_{y\to0^+}y^aU_y(x,y)=c_s\L_\f^sv(x).$$
On the other hand, it is readily verified that
$$-y^{1-2s}U_y=-\frac{1}{(2s)^{2s-1}}V_z.$$
Thus,
\begin{equation*}
c_s\L_\f^s v(x) = -\lim_{y\to0^+}y^aU_y(x,y) = -\frac{1}{(2s)^{2s-1}}\lim_{z\to0^+}V_z(x,z) = \frac{d_s}{(2s)^{2s-1}}L_\f^sv(x).
\end{equation*}
Hence, as $c_s=d_s/(2s)^{2s-1}$, for $v \in \Dom_S(L_\f)$ we get 
\begin{equation}\label{Lv=LLv}
L^s_\f v=\L_\f^s v.
\end{equation}
Therefore, $L^s_\f v(x)=\L^s_\f v(x)$ and both the divergence and nondivergence structures occur simultaneously. 
\qed

\begin{rem} Notice that, in order to keep the equality in \eqref{Lv=LLv} consistent with \eqref{L=calL}, the righthand sides of $L^s_\f v$ and $\L_\f^s v$ in \eqref{eq:vfuf} must differ by $\muf$, as stemming from the equations \eqref{eq:problem1} and \eqref{eq:problem2}. 
\end{rem}

\begin{rem}\label{rmk:finite:E}
The following finite-energy estimate will play a key role in Section \ref{sec:S(Q)}:
\begin{equation}\label{E:E:V}
 \int_{S}\int_0^{\infty} |\nf V(x,z)|^2 z^{1/s-2} \dz \dmuf(x) + \int_{S}\int_{0}^{\infty} V_z(x,z)^2 \dz \dmuf(x) < \infty.
\end{equation}
We prove \eqref{E:E:V} by using the change of variables \eqref{change:yz}. Indeed, 
\begin{align*}
\int_{S} \int_0^{\infty} &|\nf V(x,z)|^2 z^{1/s-2} \dz \dmuf(x) \\
& = \int_{S} \int_0^{\infty} |\nf U(x,y)|^2 \left(\frac{y}{2s}\right)^{2s \left(1/s-2\right)}  \left(\frac{y}{2s}\right)^{2s-1} \dy\dmuf(x) \\
& = (2s)^{2s -1} \int_{S_\f} \int_0^{\infty} |\nf U(x,y)|^2 y^{a} \dy \dmuf(x).
\end{align*}
On the other hand, from \eqref{Uy:Vz}, 
\begin{align*}
\int_{S}  \int_{0}^{\infty} V_z(x,z)^2 \dz \dmuf(x)& = \int_{S}  \int_{0}^{\infty} U_y(x,y)^2  \left(\frac{y}{2s}\right)^{1-2s} \dy \dmuf(x)\\
& = (2s)^{2s -1}  \int_{S} \int_{0}^{\infty} U_y(x,y)^2 y^a \dy \dmuf(x).
\end{align*}
Therefore, by the energy identity \eqref{L2energy},
\begin{align*}
\int_{S} \int_0^{\infty}& |\nf V(x,z)|^2 z^{1/s-2} \dz \dmuf(x) + \int_{S}  \int_{0}^{\infty} V_z(x,z)^2 \dz \dmuf(x)\\
& =  (2s)^{2s -1}  \bigg[\int_{S} \int_0^{\infty} |\nf U(x,y)|^2 y^{a} \dy \dmuf(x)+ \int_{S} \int_{0}^{\infty} U_y(x,y)^2 y^a \dy \dmuf(x) \bigg]\\
 & = (2s)^{2s -1}  c_s\int_{S} |\L_\v^{s/2}u(x)|^2 \dmuf(x) < \infty. \qed
\end{align*}
\end{rem}

\section{Notation and Monge--Amp\`ere background}\label{sec:background}

Throughout the article, the function $\phi$ will denote a generic convex function which is used as a placeholder for the functions $\f$, $\F$, and $h_s$ to be introduced in Section \ref{sec:def:Phi}. Let also $N$ denote a generic dimension that will take the values $n$, $n+1$, or $1$. 

For a strictly convex function $\phi \in C^1(\rN)$ (strictly convex in the sense that its graph contains no line segments), its associated Monge--Amp\`ere measure $\mu_\phi$ acts on a Borel set $E \subset \rN$ as
\begin{equation*}
\mu_\phi(E):= |\nabla \phi(E)|,
\end{equation*}
where $|F|$ denotes the Lebesgue measure of a subset $F \subset \rN$. Given $x \in \rN$ and $R > 0$, its Monge--Amp\`ere section $S_\phi(x,R)$ is defined as the open, convex set
\begin{equation*}\label{def:SxR}
S_\phi(x, R) := \{ y \in \rN: \delta_\phi(x,y) < R\}
\end{equation*}
where
\begin{equation}\label{def:delta:f}
\delta_\phi(x,y):= \phi(y) - \phi(x) - \langle \nabla \phi(x), y-x \rangle \quad \forall x, y \in \rN.
\end{equation}
We write $\mu_\phi \in \dcc_\phi$ if there exists a constant $C_d \geq 1$ such that
\begin{equation}\label{def:DC}
\mu_\phi(S_\phi(x,t)) \leq C_d \,  \mu_\phi(\tfrac{1}{2} S_\phi(x,t)) \quad \forall x \in \rN, \forall t > 0,
\end{equation}
where, for a convex set $S$, $\tfrac{1}{2} S$ denotes its $\tfrac{1}{2}$-contraction with respect to its center of mass (with the computation of the center of mass based on the Lebesgue measure). The condition $\mu_\phi \in \dcc_\phi$ is equivalent to the structure of space of homogeneous type for the triple $(\rN, \mu_\phi, \delta_\phi)$ (see \cite{MaldonadoCVPDE14} and references therein), which we understand as the minimal structure to carry out real analysis. It is in this sense that we refer to $\mu_\phi \in \dcc_\phi$ as a minimal geometric condition. 

If $\phi :\rN \to \re$ is twice differentiable at point $x_0 \in \rN$ with $\mu_\phi(x_0)= \det D^2 \phi(x_0) > 0$, then the matrix of cofactors of $D^2\phi$ at $x_0$ is given by
\begin{equation*}
A_\phi(x_0):= D^2\phi(x_0)^{-1} \mu_\phi(x_0).
\end{equation*}
If $\phi$ is three times differentiable at a point $x_0 \in \rN$ with $\mu_\phi(x_0) > 0$, then $A_\phi(x_0)$ has divergence-free columns (see, for instance, \cite[p.~462]{Evans}). Then, given $h : \rN \to \re$ twice differentiable at $x_0$ we have
\begin{equation*}\label{NL}
\dive(A_\phi(x_0) \nabla h(x_0)) = \tr(A_\phi(x_0) D^2 h(x_0)). 
\end{equation*}

\subsection{Convex conjugates}

Suppose that $\phi \in C^2(\rN)$ with $D^2 \phi > 0$. If $\mu_\phi \in \dcc_\phi$ then $\nabla \phi :\rN \to \rN$ is a continuously differentiable homeomorphism and the convex conjugate of $\phi$, denoted by $\psi : \rN \to \re$, satisfies
\begin{equation}\label{psi:phi:inv}
\nabla \psi(\nabla \phi) = \nabla \phi(\nabla \psi) = id : \rN \to \rN.
\end{equation}
In particular, we have
\begin{equation}\label{psi:C2}
\psi \in C^2(\rN) \quad \text{and} \quad D^2 \psi > 0. 
\end{equation}
In addition,  $\mu_\phi \in \dcc_\phi$ implies $\mu_\psi \in \dcc_\psi$ (with doubling constants depending on the ones for $\mu_\phi$ and dimension $N$) and the Monge--Amp\`ere sections of $\phi$ and $\psi$ are related as follows 
\begin{equation}\label{sec:phi:phi}
S_\phi(x, \kappa_1 R) \subset \nabla \psi(S_\psi(\nabla \phi(x), R)) \subset S_\phi(x, K_1 R)  \quad \forall x \in \rN, R > 0,
\end{equation}
for constants $0 < \kappa_1 < 1 < K_1 < \infty$ depending only on $C_d$ in \eqref{def:DC} and dimension $N$. See \cite[Section 5]{FM04} for these and related results.

\subsection{Poincar\'e inequalities}

Associated to a strictly convex $\phi \in C^2(\rN)$ with $D^2 \phi > 0$ and $\mu_\phi \in \dcc_\phi$, Poincar\'e inequalities with respect to its Monge--Amp\`ere sections and gradient $\nabla^\phi:=(D^2\phi)^{-1/2}\nabla$ have been proved in \cite[Theorem 1.3]{MaldonadoCVPDE14}. Namely, there exists a constant $C_P >0$, depending only on the $\dcc_\phi$ constant and $N$, such that for every section $S_\phi:=S_\phi(x_0,R)$ and every $u \in C^1(S_\phi)$ we have 
\begin{equation}\label{Poincare:phi}
\frac{1}{|S_\phi|}\int_{S_\phi} |u(x) - u_{S_\phi}| \dx  \leq C_{P} \, R^\frac{1}{2} \left(\frac{1}{|S_\phi|} \int_{S_\phi} |\nabla^\phi u(x)|^2 \dx \right)^\frac{1}{2},
\end{equation}
where $\displaystyle u_{S_\phi}:= \frac{1}{|S_\phi|}\int_{S_\phi} u(x) \dx$ and $|\nabla^\phi u(x)|^2  = \langle D^2\phi(x)^{-1} \nabla u(x), \nabla u(x) \rangle$. 

From now on, for a Borel measure $\mu$, which will be either a Monge--Amp\`ere measure $\mu_\phi$ or the Lebesgue measure, and a measurable set $E \subset \rN$ we put
\begin{equation*}
\fint_E f(x) \, d\mu(x) := \frac{1}{\mu(E)}\int_E f(x) \, d\mu(x).
\end{equation*}

\subsection{The $L^2(S, d\mu_\phi)$-energy of the quasi-distance $\delta_\phi$}

Here we record the following consequence of Lemma 3.1 from \cite{MaldonadoW1p}: given any strictly convex function $\phi \in C^3(\rN)$ (no doubling assumptions required) and any section $S:=S_\phi(x_0, R)$, we have 
\begin{equation}\label{Dxx0:phi}
\int\limits_{S}  \langle D^2 \phi(x)^{-1} (\nabla \phi(x) - \nabla \phi(x_0)), \nabla \phi(x) - \nabla \phi(x_0) \rangle \,d\mu_\phi(x) \leq n R \mu_\phi(S_\phi(x_0, R)).
\end{equation}
By recalling the definition of $\delta_\phi$ from \eqref{def:delta:f}, for each fixed $x_0 \in \rN$, we have
$$
\nabla \delta_\phi(x_0, x) =  \nabla \phi(x) - \nabla \phi(x_0), 
$$ 
and then
$$
|\nabla^\phi\delta_\phi(x_0, x)|^2 =\langle D^2 \phi(x)^{-1} (\nabla \phi(x) - \nabla \phi(x_0)), \nabla \phi(x) - \nabla \phi(x_0) \rangle,
$$ 
which makes \eqref{Dxx0:phi} an estimate on the $L^2(S, d\mu_\phi)$-energy, with respect to the Monge--Amp\`ere gradient $\nabla^\phi$, of the mapping $x \mapsto \delta_\phi(x_0, x)$. 

\section{The function $\F$}\label{sec:def:Phi}

Henceforth, fix $\f \in C^3(\rn)$ with $D^2 \f > 0$ in $\rn$ and $\muf \in \dcc_\f$. Given $0<s<1$ introduce
 \begin{equation*}\label{def:hs}
 h_s(z):= \frac{s^2   }{(1-s)} |z|^{1/s},\quad \forall z \in \re,
 \end{equation*}
and set
\begin{equation}\label{def:Phi:s}
\F(x,z):= \f(x) + h_s(z), \quad \forall (x,z) \in \rn \times \re.
\end{equation}
Observe that both $h_s$ and $\Phi$ are strictly convex (in the sense that their graphs do not contain line segments), continuously differentiable functions. From \eqref{def:Phi:s}, for every $(x,z) \in \rn \times (\re \setminus \{0\})$ we have 
\begin{equation}\label{def:muF}
\muF(x,z) = \muf(x) \mu_{h_s}(z)=  \muf(x) {h_s''}(z)= \muf(x) |z|^{1/s-2}.
\end{equation}

\subsection{The function $\F$ and the (DC) doubling property}

By defining \eqref{def:delta:f} for $\F$ as in \eqref{def:Phi:s}, for $X=(x,z), X_0=(x_0, z_0) \in \re^{n+1}$, we have
\begin{equation}\label{exp:delta:F}
\begin{aligned}
\delta_\F&(X_0, X)= \delta_\f(x_0, x) + \delta_{h_s}(z_0, z)\\
& = \f(x) - \f(x_0) - \langle \nabla \f(x_0), x-x_0 \rangle + h_s(z) - h_s(z_0) - h_s'(z_0)(z -z_0).
\end{aligned}
\end{equation}

Notice that, since $0 < s <1$, the function $h_s''(z) =  |z|^{1/s-2}$ is a  Muckenhoupt $A_p$-weight on the real line for some $1 < p < \infty$ if and only if $1/s < p+1$ and $h_s'' \in A_1$ if and only if $1/s \leq 2$ (see Example 9.1.7 on \cite[p.~286]{Gr2}). Hence, $h_s'' \in A_\infty$ for every $0<s<1$, which makes it a doubling weight on the real line (equivalently, $h_s \in \dcc_{h_s}$, since in dimension 1 the $\dcc$ doubling property coincides with the usual doubling property) whose doubling constant depends only on $s$. 
In particular, there exists $K_s \geq 1$, depending only on $0<s<1$, such that 
\begin{equation}\label{def:Ks}
\delta_{h_s}(z, z') \leq K_s \left(\min\{\delta_{h_s}(z,z''), \delta_{h_s}(z'',z) \} + \min\{ \delta_{h_s}(z',z''), \delta_{h_s}(z'',z') \} \right) 
\end{equation}
for every $z, z', z'' \in \re$. 

Now, since $\muf \in \dcc_\f$ and $h_s \in \dcc_{h_s}$, from \cite[Lemma 6]{FM09} it follows that  $\Phi$, being the tensor sum of $\f$ and $h_s$, satisfies $\F \in \dcc_\Phi$ with constants depending only on the $\dcc$ constants for $\muf$, dimension $n$, and $s$.  In addition, the condition $\muF \in \dcc_\F$ is quantitatively equivalent to the existence of $K \geq 1$ such that
\begin{equation}\label{def:K}
\delta_\F(X,Y) \leq K \left(\min\{\delta_\F(Z,X), \delta_\F(X,Z) \} + \min\{ \delta_\F(Z,Y), \delta_\F(Y,Z) \} \right) 
\end{equation}
for every $X, Y, Z \in \re^{n+1}$.

By \cite[Lemma 6]{FM09} the sections of $\F$ are related to the ones of $\f$ and $h_s$ by
\begin{equation}\label{S:vs:SxS}
S_\Phi((x_0,z_0), R) \subset S_\f(x_0, R) \times S_{h_s}(z_0, R) \subset S_\Phi((x_0, z_0), 2R) 
\end{equation}
for every $(x_0,z_0) \in \rn \times \re $ and $R > 0$. 

We recall that constants depending only on the $\dcc$ constants for $\muf$ in \eqref{def:DC}, $0<s<1$ and dimension $n$ will be called \emph{geometric constants}. 

By \cite[Corollary 3.3.2]{guti}, the condition $\muF \in \dcc_\F$ implies the following doubling property for $\muF$: there exists a geometric constant $K_d > 1$ such that
\begin{equation}\label{def:Kd}
\muF(S_\F(X, 2R)) \leq K_d \, \muF(S_\F(X, R))\quad \forall X \in \re^{n+1}, R > 0.
\end{equation}
Iterations of \eqref{def:Kd} yield
\begin{equation}\label{doubling:r:R}
\muF(S_\F(X, R)) \leq K_d \left( \frac{R}{r}\right)^\nu \muF(S_\F(X, r)) \quad \forall X \in \re^{n+1}, 0 < r < R,
\end{equation}
where $\nu: =\log_2 K_d$. Also, by $\muF \in \dcc_\F$ (as well as the hypotheses $\F \in C^1(\re^{n+1})$ and its strict convexity), there exists a geometric constant $K_3 > 1$ such that for every section $S:=S_\F(X, R)$ we have
\begin{equation}\label{def:K3}
\muF(S) |S| \leq K_3 \,R^{n+1},
\end{equation}
see for instance \cite[Theorem 1]{FM04}. 

\subsection{The matrix of cofactors of $D^2 \Phi$}

From \eqref{def:Phi:s}, for every $(x,z) \in \rn \times (\re \setminus \{0\})$
\begin{equation}\label{def:D2Finv}
D^2 \Phi(x,z)^{-1}=
\begin{pmatrix}
D^2\v(x)^{-1} & 0 \\
0 & |z|^{2-1/s}
\end{pmatrix}
\in \R^{n+1}\times\R^{n+1},
\end{equation}
where we have excluded the value $z=0$ to avoid the singularities of $|z|^{1/s-2}$ or $|z|^{2-1/s}$. 

From \eqref{def:muF} and \eqref{def:D2Finv},  for every $(x,z) \in \rn \times (\re \setminus \{0\})$ the matrix of cofactors of $D^2 \Phi(x,z)$ equals
\begin{equation*}\label{def:AFs}
A_\Phi(x,z):=
\begin{pmatrix}
A_\f(x) |z|^{1/s -2}& 0 \\
0 & \muf(x)
\end{pmatrix}
\in \R^{n+1}\times\R^{n+1}.
\end{equation*}
Important features are that $A_\Phi(x,z) \in L^1_{\mathrm{loc}}(\rn \times \re)$ and $A_\Phi(x,z)$ is differentiable for (Lebesgue) a.e. $(x,z) \in \rn \times \re$. Notice that the first $n$ columns of $A_\Phi$ are differentiable with respect to $x$ and the last column is differentiable with respect to $z$. Also, since the columns of $A_\f$ are divergence free, so are the columns of $A_\Phi$. 

If $H : \re^{n+1} \to \re$ is differentiable at a point $X=(x,z) \in  \rn \times (\re \setminus \{0\})$, the Monge--Amp\`ere gradient of $H$ at $X$ is then given by
\begin{equation*}
\nabla^\F H(X) = D^2 \F(X)^{-1/2} \nabla H(X) = (D^2 \f(x)^{-1/2} \nabla_x H(x, z),   |z|^{1-\frac{1}{2s}} H_z(x,z)) \in \re^{n+1},
\end{equation*}
which implies
\begin{align}\label{exp:nablaF:H:2}
|\nabla^\F H(X)|^2 &= \langle  D^2 \f(x)^{-1} \nabla_x H(x, z), \nabla_x H(x, z) \rangle +   |z|^{2-1/s} H_z(x,z)^2\\\nonumber
& = |\nabla^\f H(x,z)|^2 +   |z|^{2-1/s} H_z(x,z)^2.
\end{align}

\subsection{The $L^2(S_\F, d\mu_\F)$-energy of the quasi-distance $\delta_\F$}

For $\Phi$ as in \eqref{def:Phi:s}, we will next prove the following counterpart to \eqref{Dxx0:phi}: For every section $S_\F(X_0, R)$ it holds true that
\begin{align}\label{L2:Energy:delta:Phi}
&\int_{S_\F(X_0, R)}  \langle A_\F(X) (\nabla \F(X) - \nabla \F(X_0)), \nabla \F(X) - \nabla \F(X_0) \rangle \dX\\\nonumber
& \leq (n +2) K_d R \,\muF(S_\F(X_0, R)).
\end{align}
Notice that $\Phi \notin C^3(\re^{n+1})$, so we cannot directly apply \eqref{Dxx0:phi} with $\phi = \F$. Instead, we will use the tensorial nature of $\Phi$. Given a section $S_\Phi:=S_\F(X_0, R)$, with $X_0=(x_0, z_0) \in \re^{n+1}$, by means of  \eqref{exp:nablaF:H:2} and \eqref{exp:delta:F} we can write
\begin{align*}
&\int_{S_\F}  \langle A_\F(X) (\nabla \F(X) - \nabla \F(X_0)), \nabla \F(X) - \nabla \F(X_0) \rangle \dX\\
& =\int_{S_\F} |\nabla^\F \delta_\F(X_0, X)|^2 \dmuF(X) \\
&= \int_{S_\F} \left( \langle  D^2 \f(x)^{-1} \nabla \delta_\F(X_0, X), \nabla \delta_\F(X_0, X) \rangle +   |z|^{2-1/s} \frac{\partial{\delta_\F}}{\partial z}(X_0, X)^2 \right) \dmuF(X)\\
&= \int_{S_\F} \left( |\nabla^\f \delta_\f(x_0, x)|^2 +   |z|^{2-1/s} (h_s'(z) - h_s'(z_0))^2 \right) \dmuF(X).
\end{align*}
Now, from the first inclusion in \eqref{S:vs:SxS} and  \eqref{Dxx0:phi} (used with $\phi = \f$, since $\f \in C^3(\rn)$), 
\begin{align*}
& \int_{S_\F}  |\nabla^\f \delta_\f(x_0, x)|^2  \dmuF(X) \leq \int_{S_\f(x_0, R)}  |\nabla^\f \delta_\f(x_0, x)|^2 \dmuf(x)
\times \int_{S_{h_s}(z_0, R)} h_s''(z) \dz\\
& \leq n R \muf(S_\f(x_0, R)) \mu_{h_s}(S_{h_s}(z_0, R)) = n R\muF(S_\f(x_0, R) \times S_{h_s}(z_0, R) )\\
& \leq n R \muF(S_\F(X_0, 2R)) \leq n K_d R \muF(S_\F(X_0, R)),
\end{align*}
where for the last two inequalities above we used the second inclusion in \eqref{S:vs:SxS} and the doubling property \eqref{def:Kd}. On the other hand, by \eqref{def:muF},
\begin{align*}
 \int_{S_\F}  |z|^{2-1/s} (h_s'(z) - h_s'(z_0))^2 \dmuF(X) =  \int_{S_\F} (h_s'(z) - h_s'(z_0))^2 \dmuf(x) \dz.
\end{align*}
Let us write the one-dimensional section $S_{h_s}(z_0, R)$ as $S_{h_s}(z_0, R) = (z_\ell, z_r)$, where $z_\ell, z_r \in \re$ satisfy 
\begin{equation}\label{cond:zrzl}
h_s(z_\ell) - h_s(z_0) - h_s'(z_0)(z_\ell - z_0) = h_s(z_r) - h_s(z_0) - h_s'(z_0)(z_r - z_0) = R.
\end{equation}
From  the first inclusion in \eqref{S:vs:SxS}  we have
\begin{align*}
 \int_{S_\F} (h_s'(z) - h_s'(z_0))^2 \dmuf(x) \dz \leq \muf(S_\f(x_0, R)) \int_{z_\ell}^{z_r}  (h_s'(z) - h_s'(z_0))^2 \dz.
\end{align*}
As $h_s'$ is increasing, 
\begin{align*}
 \int_{z_\ell}^{z_r}  (h_s'(z) - h_s'(z_0))^2 \dz &\leq (h_s'(z_r) - h_s'(z_\ell))  \int_{z_\ell}^{z_r}  (h_s'(z) - h_s'(z_0))^2 \dz\\
&  = \mu_{h_s}(z_\ell, z_r)  \int_{z_\ell}^{z_r}  |h_s'(z) - h_s'(z_0)| \dz.
\end{align*}
At this point we split the last integral above as
\begin{align*}
& \int_{z_\ell}^{z_r}  |h_s'(z) - h_s'(z_0)| \dz  = \int_{z_\ell}^{z_0} ( h_s'(z_0) -  h_s'(z)) \dz + \int_{z_0}^{z_r} (h_s'(z) - h_s'(z_0)) \dz\\
& = \left( h_s'(z_0)(z_0 - z_\ell) - h_s(z_0) + h_s(z_\ell)  \right)+  \left( h_s(z_r) - h_s(z_0) - h_s'(z_0)(z_r - z_0)  \right)= 2R,
\end{align*}
where the last equality is due to \eqref{cond:zrzl}. Therefore,
\begin{align*}
& \int_{S_\F}  |z|^{2-1/s} (h_s'(z) - h_s'(z_0))^2 \dmuF(X) \leq \muf(S_\f(x_0, R)) \int_{z_\ell}^{z_r}  (h_s'(z) - h_s'(z_0))^2 \dz\\
& \leq \muf(S_\f(x_0, R))  \mu_{h_s}(z_\ell, z_r)  \int_{z_\ell}^{z_r}  |h_s'(z) - h_s'(z_0)| \dz =  \muf(S_\f(x_0, R)) \mu_{h_s}(S_{h_s}(z_0, R)) 2 R\\
& \leq 2 R \muF(S_\F(X_0, 2R)) \leq 2 K_d R  \muF(S_\F(X_0, R)),
\end{align*}
and \eqref{L2:Energy:delta:Phi} follows. \qed 

\section{The function $\F$ and a weak Poincar\'e inequality}\label{sec:PI:F} 

Our goal in this section is to prove a version of the Poincar\'e inequality \eqref{Poincare:phi} with Lebesgue measure being replaced by the Monge--Amp\`ere measure $\muF$. We will reason along the lines of \cite[Section 4]{MaMRL13}, where the change in the opposite direction (i.e. from Monge--Amp\`ere to Lebesgue measure) was made by means of convex conjugation. In the case of $\F$, however, an approximation argument will be used to circumvent the fact that $\F \notin C^2(\re^{n+1})$. 

\begin{thm}\label{thm:Poincare:Phi}
Let $\F$ be as in \eqref{def:Phi:s}. Then there exist geometric constants $K_2 > 1$ and $K_{P} > 0$, such that for every section $S_\F:=S_\F(X_0, R)$ and $G \in C(S_\F(X_0, K_2 R))$ with $\nabla^\F G \in L^2(S_\F(X_0, K_2 R), \dmuF)$ we have
\begin{equation}\label{Poincare:Phi}
\fint_{S_\F} |G(X) - G_{S_\F}| \dmuF(X)  \leq K_{P} \, R^\frac{1}{2} \left(\fint_{S_\F(X_0, K_2 R)} |\nabla^\F G(X)|^2 \dmuF(X) \right)^\frac{1}{2},
\end{equation}
where
$$G_{S_\F}:= \fint_{S_\F} G(X) \dmuF(X).$$
\end{thm}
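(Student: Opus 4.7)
\emph{Strategy.} Following \cite[Section 4]{MaMRL13}, the plan is to apply the Lebesgue-measure Poincar\'e inequality \eqref{Poincare:phi} to the convex conjugate $\F^\ast$ and then push the result back to a $\muF$-inequality on sections of $\F$ via the change of variables $Y=\nF(X)$. Since $\F\notin C^2(\re^{n+1})$---and hence $\F^\ast$ does not automatically enjoy the $C^2$ regularity required by \eqref{Poincare:phi}---we will first prove the statement for a $C^3$ approximation $\F_\eps$ and then pass to the limit.

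\emph{Approximation and application of \eqref{Poincare:phi}.} We choose $h_{s,\eps}\in C^3(\re)$ strictly convex with $h_{s,\eps}''>0$, $h_{s,\eps}\to h_s$ in $C^1_{\mathrm{loc}}(\re)$, $h_{s,\eps}''\to h_s''$ in $L^1_{\mathrm{loc}}(\re)$, and with the one-dimensional $\dcc$-constant of $h_{s,\eps}$ bounded uniformly in $\eps$ (for instance, a standard mollification plus $\eps z^2$). Set $\F_\eps(x,z):=\f(x)+h_{s,\eps}(z)$. By the tensor-sum argument of \cite[Lemma~6]{FM09} (cited after \eqref{def:K}), $\mu_{\F_\eps}\in\dcc_{\F_\eps}$ with a uniform constant, and $\F_\eps\in C^3(\re^{n+1})$ with $D^2\F_\eps>0$. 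Let $\psi_\eps:=\F_\eps^\ast$; by \eqref{psi:C2} and the paragraph preceding \eqref{sec:phi:phi}, $\psi_\eps\in C^2(\re^{n+1})$, $D^2\psi_\eps>0$, and $\mu_{\psi_\eps}\in\dcc_{\psi_\eps}$ uniformly in $\eps$. We then apply \eqref{Poincare:phi} to $\psi_\eps$ on $S_{\psi_\eps}(Y_{0,\eps},R/\kappa_1)$ with $Y_{0,\eps}:=\nabla\F_\eps(X_0)$, using the test function $u_\eps(Y):=G(\nabla\psi_\eps(Y))$.

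\emph{Change of variables and averaging.} Substituting $Y=\nabla\F_\eps(X)$ gives $dY=d\mu_{\F_\eps}(X)$ and, through $D^2\psi_\eps(\nabla\F_\eps(X))=D^2\F_\eps(X)^{-1}$, the identity $|\nabla^{\psi_\eps}u_\eps(Y)|^2=|\nabla^{\F_\eps}G(X)|^2$. Setting $E_\eps:=\nabla\psi_\eps(S_{\psi_\eps}(Y_{0,\eps},R/\kappa_1))$, the inequality becomes
\begin{equation*}
\fint_{E_\eps}|G-G_{E_\eps}|\,d\mu_{\F_\eps} \leq C_P (R/\kappa_1)^{1/2}\left(\fint_{E_\eps}|\nabla^{\F_\eps}G|^2\,d\mu_{\F_\eps}\right)^{1/2},
\end{equation*}
with averages taken against $\mu_{\F_\eps}$. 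The inclusions \eqref{sec:phi:phi} applied to $\F_\eps$ give $S_{\F_\eps}(X_0,R)\subset E_\eps\subset S_{\F_\eps}(X_0,K_1R/\kappa_1)$; set $K_2:=K_1/\kappa_1$. The standard triangle-inequality trick---replacing $G_{E_\eps}$ with $G_{S_{\F_\eps}(X_0,R)}$ and using the uniform doubling \eqref{def:Kd} to compare $\mu_{\F_\eps}$-measures of $S_{\F_\eps}(X_0,R)$ and $E_\eps$---yields \eqref{Poincare:Phi} for $\F_\eps$ with a geometric constant $K_P$.

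\emph{Passage to the limit and main obstacle.} As $\eps\to 0$, $\F_\eps\to\F$ in $C^1_{\mathrm{loc}}(\re^{n+1})$, so the sections $S_{\F_\eps}(X_0,\rho)$ converge to $S_\F(X_0,\rho)$ in Hausdorff distance. The measures $d\mu_{\F_\eps}=h_{s,\eps}''(z)\,dz\,d\muf(x)$ converge weakly to $d\muF$ and strongly in $L^1_{\mathrm{loc}}$ away from $\{z=0\}$; and since $A_{\F_\eps}$ differs from $A_\F$ only by the substitution $h_s''\mapsto h_{s,\eps}''$ in the spatial block, the integrand $|\nabla^{\F_\eps}G|^2\,d\mu_{\F_\eps}=\langle A_{\F_\eps}\nabla G,\nabla G\rangle\,dX$ converges to $|\nabla^\F G|^2\,d\muF$ accordingly. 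Continuity of $G$ together with the standing hypothesis $|\nabla^\F G|^2\in L^1(S_\F(X_0,K_2R),d\muF)$ will provide the dominating integrability needed to pass each integral to the limit and obtain \eqref{Poincare:Phi}. The main obstacle lies precisely in this limiting step across the hyperplane $\{z=0\}$, where $h_s''(z)=|z|^{1/s-2}$ is singular (for $s<1/2$) or degenerate (for $s>1/2$): ruling out pathological concentration of $\mu_{\F_\eps}$ or of the gradient integrand on $\{z=0\}$ requires the local integrability of $h_s''$ on $\re$ together with the $\eps$-uniform doubling, both of which are already encoded in the choice of $h_{s,\eps}$.
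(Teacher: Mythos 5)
Your proposal follows essentially the same approach as the paper's proof: mollify $h_s$ to obtain $\F_\eps=\f+h_{s,\eps}$ with a $\dcc_{\F_\eps}$-constant controlled uniformly in $\eps$, apply the Lebesgue-measure Poincar\'e inequality \eqref{Poincare:phi} to the convex conjugate $\Psi_\eps=\F_\eps^\ast$, push forward through $Y=\nabla\F_\eps(X)$ (using $D^2\Psi_\eps(\nabla\F_\eps(X))=D^2\F_\eps(X)^{-1}$ and the section inclusions \eqref{sec:phi:phi} to define $K_2:=K_1/\kappa_1$), and pass to the limit using the $L^1_{\mathrm{loc}}$ convergence of $A_{\F_\eps}$ and $\mu_{\F_\eps}$. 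One step you gloss over: \eqref{Poincare:phi} is stated for $u\in C^1$, but your $u_\eps(Y)=G(\nabla\psi_\eps(Y))$ is only continuous under the stated hypothesis $G\in C(S_\F(X_0,K_2R))$ with $\nabla^\F G\in L^2(\dmuF)$; the paper closes this by first proving \eqref{Poincare:Phi} for $G\in C^1$ and then invoking density of smooth functions in $L^2(\Omega,w\,dX)$ for $w=\muF\in A_\infty(\Omega)$ on bounded convex $\Omega$. Also, the paper verifies explicitly (a short computation exploiting that $h_s''$ is even and doubling) that $h_{s,\eps}''=h_s''*\eta_\eps$ has doubling constant no larger than that of $h_s''$ uniformly in $\eps$, which is the point your ``for instance, a standard mollification plus $\eps z^2$'' leaves unverified; pure mollification, as in the paper, is cleaner and sufficient.
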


\begin{proof}  Let $\eta \in C^1(\re)$ be nonnegative and compactly supported in $[-1,1]$ with $\int_\re \eta =1$. For $\eps > 0$ set $\eta_\eps(z):=\tfrac{1}{\eps}\eta(\tfrac{z}{\eps})$ and define
 \begin{equation*}\label{def:hs:eps}
 h_{s, \eps}(z):= h_s * \eta_\eps(z) \quad \forall z \in \re
 \end{equation*}
and
\begin{equation*}\label{def:Phi:eps}
\F_{\eps}(x,z):= \f(x) + h_{s, \eps}(z) \quad \forall (x,z) \in \rn \times \re.
\end{equation*}
Then, for every $\eps > 0$,  we have that $\F_{\eps} \in C^2(\re^{n+1})$ with $D^2 \F_{\eps} > 0$. In addition,  $h_{s, \eps}''$ converges to $h_s''$ in $L^1_{\mathrm{loc}}(\re)$ and, consequently, $\mu_{\F_{\eps}}$ converges to $\muF$ in $L^1_{\mathrm{loc}}(\re^{n+1})$ as $\eps$ tends to $0$. Also, $\nabla \F_{\eps}$ converges to $\nabla \F$ uniformly on compact sets of $\re^{n+1}$ .  

The matrix of cofactors of $D^2 \F_{\eps}(x,z)$ is given by 
\begin{equation*}\label{def:AFs:eps}
A_{\F_{\eps}}(x,z):=
\begin{pmatrix}
A_\v(x)h_{s, \eps}''(z)& 0 \\
0 & \muf(x)
\end{pmatrix}
\in \R^{n+1}\times\R^{n+1}.
\end{equation*}
Now, a simple computation shows that, for each $\eps > 0$,  the measure $h_{s, \eps}''$ is a doubling measure on the real line with doubling constant smaller than or equal to the doubling constant for $h_s''$, which, in turn, depends only on $s$. Indeed, let $C_s \geq 1$ denote the doubling constant for $h_s''$ as measure on the real line. Given $c \in \re$ and $r > 0$, let $I_c:=[c-r, c+r]$ and $2I_c:= [c- 2r, c+ 2r]$. Then, 
\begin{align*}
\int_{2I_c} h_{s, \eps}''(z) \dz = \int_{2I_c}  \int_{\re} \eta_\eps(y) h_s''(z-y) \dy \dz =   \int_{\re} \int_{2I_c}  \eta_\eps(y) h_s''(z-y)  \dz \dy.
\end{align*}
For each $y \in \re$, by changing variables $w:= y-z$ we get $z \in 2 I_c$ if and only if $w \in 2 I_{c-y}$. Hence, using that $h_s''$ is even and doubling with constant $C_s$,
\begin{align*}
&\int_{\re} \int_{2I_c}  \eta_\eps(y) h_s''(z-y)  \dz \dy = \int_{\re} \eta_\eps(y) \int_{2 I_{c -y}} h_s''(w) \, dw\\
& \leq C_s  \int_{\re} \eta_\eps(y) \int_{I_{c -y}} h_s''(w) \, dw = C_s \int_{I_c}  \int_{\re} \eta(y) h_s''(y-z) \dy \dz = C_s\int_{I_c} h_{s, \eps}''(z) \dz.
\end{align*}
Thus,
$$
\int_{2I_c} h_{s, \eps}''(z) \dz \leq C_s\int_{I_c} h_{s, \eps}''(z) \dz,
$$
uniformly in $\eps > 0$.  By \cite[Lemma 6]{FM09}, it follows that the $\dcc$ constants for  $\F_{\eps}$ are controlled by the one for $\F$ uniformly in $\eps > 0$.

Next, let $\Psi_{\eps}$ denote the convex conjugate of $\F_{\eps}$. By \eqref{psi:C2}   we have  $\Psi_{\eps} \in C^2(\re^{n+1})$ with $D^2 \Psi_{\eps} > 0$ for every $\eps > 0$. 

Fix $Y_0 \in \re^{n+1}$, $R > 0$, and set $X_0:=\nabla \F_{\eps}(Y_0)$. Let $\kappa_1, K_1$ be the geometric constants from \eqref{sec:phi:phi} applied to $\F_{\eps}$ and define  $S_{\Psi_{\eps}}:= S_{\Psi_{\eps}}(Y_0, R/\kappa_1)$ and $K_2:=K_1/\kappa_1$. Hence, the inclusions \eqref{sec:phi:phi} yield
\begin{equation}\label{SFeps:SPhi}
S_{\F_{\eps}}(X_0, R) \subset S^{\F_{\eps}} := \nabla \Psi_{\eps} (S_{\Psi_{\eps}}) \subset S_{\F_{\eps}}(X_0, K_2 R).
\end{equation}
Assume first that $G \in C^1(S_\F(X_0, K_2 R))$ and  define $H \in C^1(S_{\Psi_{\eps}})$ as 
$$
H(Y):= G(\nabla \Psi_\eps(Y)) \quad \forall Y \in S_{\Psi_{\eps}}
$$
which, by putting $Y:= \nabla \F_\eps (X)$, yields
\begin{align*}
\nabla^{\Psi_\eps}H(Y)& = D^2 \Psi_\eps(Y)^{-1/2} \nabla H(Y) = D^2 \Psi_\eps(Y)^{-1/2} D^2 \Psi_\eps(Y) \nabla G(X)\\
&= D^2 \Psi_\eps(Y)^{1/2}  \nabla G(X)= D^2 \F_\eps(X)^{-1/2} \nabla G(X) = \nabla^{\F_\eps} G(X).
\end{align*}
Now, by the Poincar\'e inequality \eqref{Poincare:phi} applied to $\Psi_{\eps}$ on the section $S_{\Psi_{\eps}}$ and $H \in C^1(S_{\Psi_{\eps}})$, we get
\begin{equation}\label{Poincare:Psi:eps}
\fint_{S_{\Psi_{\eps}}} |H(Y) - H_{S_{\Psi_{\eps}}}| \dY  \leq C_{P} \, R^\frac{1}{2} \left(\fint_{S_{\Psi_{\eps}}} |\nabla^{\Psi_{\eps}} H(Y)|^2 \dY \right)^\frac{1}{2},
\end{equation}
with $C_P > 0$ a geometric constant,  and  by changing variables $Y:= \nabla \F_{\eps}(X)$ in \eqref{Poincare:Psi:eps} and using that the identity \eqref{psi:phi:inv} gives $dY= \mu_{\F_{\eps}}(X) \dX$ and
$$ 
|\nabla^{\Psi_{\eps}} H(Y)|^2 dY = |\nabla^{\F_{\eps}} G(X)|^2   \mu_{\F_{\eps}}(X) \dX = \langle A_{\F_\eps}(X) \nabla G(X), \nabla G(X) \rangle \dX. 
$$
we obtain
\begin{equation}\label{Poincare:Phi:eps}
\fint_{S^{\F_{\eps}}} |G(X) - G^{S^{\F_{\eps}}}| \,  \mu_{\F_{\eps}}(X) \dX
\leq \left(\frac{ C_{P}^2 \, R}{\mu_{\F_{\eps}}(S^{\F_{\eps}})} \int_{S^{\F_{\eps}}}|\nabla^{\F_{\eps}} G(X)|^2 \, \mu_{\F_{\eps}}(X) \dX \right)^\frac{1}{2},
\end{equation}
where we have put
$$
G^{S^{\F_{\eps}}}:= \frac{1}{\mu_{\F_{\eps}}(S^{\F_{\eps}})  }\int_{S^{\F_{\eps}}} G(X) \, \mu_{\F_{\eps}}(X) \dX
$$
and used that $|S_{\Psi_{\eps}}| = |\nabla \F_{\eps}(S^{\F_{\eps}})| = \mu_{\F_{\eps}}(S^{\F_{\eps}})$, due to the definition of $S^{\F_{\eps}}:= \nabla \Psi (S_{\Psi_{\eps}})$ in \eqref{SFeps:SPhi}. Next,  the inclusions \eqref{SFeps:SPhi} and the doubling property \eqref{doubling:r:R} give
\begin{align*}
\frac{1}{K_d K_2^\nu \mu_{\F_{\eps}}(S_{\F_{\eps}}(X_0, R))} & \leq \frac{1}{\mu_{\F_{\eps}}( S_{\F_{\eps}}(X_0, K_2 R))} \leq \frac{1}{\mu_{\F_{\eps}}(S^{\F_{\eps}})}\\
&\leq \frac{1}{\mu_{\F_{\eps}} (S_{\F_{\eps}}(X_0, R))} \leq \frac{K_d K_2^\nu}{\mu_{\F_{\eps}}( S_{\F_{\eps}}(X_0, K_2 R))}.
\end{align*}
Consequently, the inclusions \eqref{SFeps:SPhi}, the inequality \eqref{Poincare:Phi:eps}, and the fact that $A_{\F_\eps}$ converges to $A_\F$ in $L^1_{loc}(\re^{n+1})$, imply \eqref{Poincare:Phi} with $K_P:=2 C_P (K_d K_2)^{3/2}$ in the case $G \in C^1(S_\F(X_0, K_2 R))$. Then,  \eqref{Poincare:Phi}  in the case of $\nabla^\F G \in L^2(S_\F(X_0, K_2 R), \dmuF)$ follows by approximation in $L^2(S_\F(X_0, K_2 R), \dmuF)$. Just notice that smooth functions are dense in $L^2(\Omega, w(X)dX)$ for every open, bounded, convex subset $\Omega \subset \re^{n+1}$ and every $w \in A_\infty(\Omega)$ and that on the compact set $\overline{\Omega}$ we have $\muF(x,z) \sim |z|^{1/s -2} \in A_{\infty}(\Omega)$. \end{proof}

Next we record a simple consequence of the weak Poincar\'e's inequality known as ``Fabes lemma''. 

\begin{cor}\label{cor:Fabes} Let $\F$ be as in \eqref{def:Phi:s} and let $K_P > 0$ and $K_2 >1$ be the geometric constants from Theorem \ref{thm:Poincare:Phi}. Then, for every $\eps \in (0,1)$, every section $S:= S_\F(X_0, R)$ and every  $G \in C(S_\F(X_0, K_2 R))$ with $\nabla^\F G \in L^2(S_\F(X_0, K_2 R), \dmuF)$ and
\begin{equation*}
\muF(\{X \in S: G(X) = 0 \}) \geq \eps \muF(S),
\end{equation*}
we have that
\begin{equation}\label{Fabes:ineq}
\fint_S |G(X)| \dmuF(X)  \leq  \left(1+ \frac{1}{\eps} \right) K_P  \left(\fint_{S_\F(X_0, K_2 R)} |\nabla^\F G(X)|^2 \dmuF(X) \right)^\frac{1}{2}.
\end{equation}
 \end{cor}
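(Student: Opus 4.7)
The plan is to derive the Fabes-type inequality \eqref{Fabes:ineq} as a direct consequence of the weak Poincar\'e inequality \eqref{Poincare:Phi} from Theorem \ref{thm:Poincare:Phi}, using only the elementary observation that the mean value $G_{S_\F}$ can be controlled when $G$ vanishes on a set of large relative measure.

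First, I would set $E := \{X \in S : G(X)=0\}$ and use the hypothesis $\muF(E) \geq \eps\,\muF(S)$. Since $G\equiv 0$ on $E$, I can write
\begin{equation*}
|G_{S_\F}|\;=\;\frac{1}{\muF(E)}\int_E |G_{S_\F}|\dmuF(X)\;=\;\frac{1}{\muF(E)}\int_E |G(X)-G_{S_\F}|\dmuF(X),
\end{equation*}
and then bound the right-hand side by extending the integral to all of $S$:
\begin{equation*}
|G_{S_\F}|\;\leq\;\frac{1}{\muF(E)}\int_{S} |G(X)-G_{S_\F}|\dmuF(X)\;\leq\;\frac{1}{\eps}\fint_{S} |G(X)-G_{S_\F}|\dmuF(X).
\end{equation*}

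Second, I would apply the triangle inequality
\begin{equation*}
\fint_S |G(X)|\dmuF(X) \;\leq\; \fint_S |G(X)-G_{S_\F}|\dmuF(X) + |G_{S_\F}|\;\leq\;\Bigl(1+\tfrac{1}{\eps}\Bigr)\fint_S |G(X)-G_{S_\F}|\dmuF(X),
\end{equation*}
and then invoke the weak Poincar\'e inequality \eqref{Poincare:Phi} from Theorem \ref{thm:Poincare:Phi}, which bounds the right-hand side by
\begin{equation*}
K_P\,R^{1/2}\left(\fint_{S_\F(X_0, K_2 R)} |\nabla^\F G(X)|^2\dmuF(X)\right)^{1/2}.
\end{equation*}
Chaining these gives precisely \eqref{Fabes:ineq}.

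The argument is essentially bookkeeping: no geometric obstacle is expected, since the only substantive ingredient, the weak Poincar\'e inequality with enlarged section $S_\F(X_0, K_2 R)$ on the right, is already available. The one minor point to verify is that $G$ is integrable against $\muF$ on $S$ (so the averages make sense), but this is immediate from $G \in C(S_\F(X_0, K_2 R))$ and the local finiteness of $\muF$. The constant $K_2$ and the geometric dependence of $K_P$ carry through unchanged, and the factor $(1+1/\eps)$ arises precisely from the measure-theoretic step above.
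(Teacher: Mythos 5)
Your argument is correct and is essentially the paper's own proof: the authors bound $|G_{S}|$ by $\frac{1}{\eps}\fint_S|G-G_S|\,d\mu_\F$ exactly as you do (they phrase it as $|G_S-G_E|$ with $G_E=0$), add the triangle inequality to get the factor $1+\tfrac1\eps$, and then invoke the weak Poincar\'e inequality. One small remark: your chained estimate correctly carries the $R^{1/2}$ factor from \eqref{Poincare:Phi}, and indeed when the corollary is applied later (in the proof of Theorem~\ref{thm:every:CD}) the $R^{1/2}$ is present, so the displayed inequality \eqref{Fabes:ineq} as printed appears to have omitted that factor by typo rather than you having missed anything.
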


\begin{proof} Setting $E:= \{X \in S: G(X) = 0 \}$ and $G_E:= \fint_E G \dmuF =0$, for $X \in S$ we have
\begin{align*}
|G(X)| &\leq |G(X)- G_S| + |G_S - G_E| \leq |G(X)- G_S| + \left(\frac{\muF(S)}{\muF(E)}\right)  \fint_S |G - G_S| \dmuF\\
& \leq  |G(X)- G_S| +  \frac{1}{\eps} \fint_S |G - G_S| \dmuF
\end{align*}
so that
\begin{align*}
\fint_S |G(X)| \dmuF(x) \leq  \left(1+ \frac{1}{\eps} \right) \fint_S |G(X) - G_S| \dmuF(X)
\end{align*}
and then \eqref{Fabes:ineq} follows from the weak-Poincar\'e's inequality \eqref{Poincare:Phi}.
\end{proof}

\section{The class $\calS(Q)$}\label{sec:S(Q)}

From here on we fix an arbitrary $p_0 \in \rn$ and $R_0 > 0$ and define the cylinder
\begin{equation*}\label{def:Q}
Q:= S_\f(p_0, R_0) \times \re \subset \re^{n+1}.
\end{equation*}
Also, let us put
\begin{align*}
\quad Z^+&:=\{ (x, z) \in \re^{n+1}: x \in \rn, z > 0 \},\\
Z^-&:=  \{ (x, z) \in \re^{n+1}: x \in \rn, z < 0 \},\\
Z_0:&= \{ (x, 0) \in \re^{n+1}: x \in \rn \},
\end{align*}
and define
$$
Q^+:= Q \cap Z^+ \quad \text{and} \quad Q^-:= Q \cap Z^-.
$$

Henceforth, all of our sub- and super-solutions will belong to a class $\calS(Q)$ modeled after some key properties of $\widetilde{V}(x,z):=V(x, |z|)$ where $V$, as in \eqref{def:V}, is the solution to the extension problem \eqref{EP:nonDiv:S} in $S_0:=S_\f(p_0, R_0)$ for a nonnegative $v \in \Dom_{S_0}(L_\f)$. 

\subsection{The definition of $\calS(Q)$}

We write $F \in \calS(Q)$ if the following conditions hold.
\begin{enumerate}[(i)]
\item\label{z:symmetry} $F(x, z) = F(x, -z)$ for every $(x,z) \in Q$;
\item\label{C:C2} $F \in C(Q) \cap C^2(Q \setminus Z_0)$, in particular, $\nabla F$, and then $\nabla^\F F$, exist a.e. in $Q$;
\item\label{finite:L2:E} for every section $S_\F(X_0, R)$ with $S_\F(X_0, 2R) \subset \subset Q$ we have 
\begin{equation*}
\nabla^\F F \in L^2(S_\F(X_0, 2R), \dmuF);
\end{equation*}
\item\label{normal:derivative} for every $x \in S_\f(p_0, R_0)$ the limit
\begin{equation}\label{def:Fz0+}
F_z(x,0^+):= \lim\limits_{z \to 0^+} F_z(x,z)
\end{equation}
exists and the function $x \mapsto F_z(x,0^+)$ is continuous in $S_\f(p_0, R_0)$. Notice that if the limit in \eqref{def:Fz0+} exists, then we have 
\begin{equation*}\label{def2:Fz}
F_z(x,0^+) = \lim\limits_{z \to 0^+} \frac{F(x, z) - F(x,0)}{z}.
\end{equation*}
\end{enumerate}

\subsection{The definition of $F_{z,0^+}$}

Given $F \in \calS(Q)$ and an open subset $\Omega \subset \subset Q$, define
\begin{equation}\label{def:F0+}
F_{z,0^+}(\Omega):= \left\{
      \begin{array}{lcl}
       \sup \left\{|F_z(x,0^+)|: (x,0) \in \Omega \cap Z_0 \right\} &  \text{if } &  \Omega \cap Z_0\neq \emptyset, \\
          0 &  \text{if } &   \Omega \cap Z_0=  \emptyset.
      \end{array}
    \right.
\end{equation}
The continuity of the function $x \mapsto F_z(x,0^+)$ in $S_\f(p_0, R_0)$ ensures that $F_{z,0^+}(\Omega) < \infty$ for every open subset $\Omega \subset \subset Q$. 

\subsection{The fact that $\widetilde{V} \in \calS(Q)$}

Let  $V$ be the solution to the extension problem \eqref{EP:nonDiv:S} in the section $S_\f(p_0, R_0)$ and set $\widetilde{V}(x,z):=V(x, |z|)$.  Then $\widetilde{V}$ satisfies that $\widetilde{V} \in C(Q) \cap C^2(Q \setminus Z_0)$, $\widetilde{V}$ is even with respect to $z$, and for every $x \in S_\f(p_0, R_0)$,
\begin{equation*}
\widetilde{V}_z(x,0^+):= \lim\limits_{z \to 0^+} \widetilde{V}_z(x,z) = \lim\limits_{z \to 0^+} V_z(x,z) = - d_s L_\f^sv(x),
\end{equation*}
with $L_\f^sv \in C_0(S_\f(p_0, R_0))$ for every $v \in \Dom_S(L_\f)$. Also, given $X_0 = (x_0, z_0) \in \re^{n+1}$ and $R > 0$ such that  $S_\F(X_0, 2R) \subset \subset Q$, the inclusions \eqref{S:vs:SxS} yield
\begin{equation*}
S_\Phi(X_0, R) \subset S_\f(x_0, R) \times S_{h_s}(z_0, R) \subset S_\Phi(X_0, 2R)  \subset \subset Q.
\end{equation*}
Now,  by recalling  \eqref{exp:nablaF:H:2}, to get an expression for $ |\nabla^\F \widetilde{V}(X)|^2$, and by the energy estimate \eqref{E:E:V} from Remark \ref{rmk:finite:E}, we obtain
\begin{align*}
& \int_{S_\Phi(X_0, R) } |\nabla^\F \widetilde{V}(X)|^2 \dmuF(X) \leq  \int_{Q} |\nabla^\F \widetilde{V}(X)|^2 \dmuF(X) \\
& = \int_{S_\f(p_0, R) }  \int_{-\infty}^{\infty} \left( |\nabla^\f \widetilde{V}(x,z)|^2 +   |z|^{2-1/s} \widetilde{V}_z(x,z)^2\right)  |z|^{1/s-2} \dz \dmuf(x)\\
& = 2 \int_{S_\f(p_0, R) }  \int_{0}^{\infty} |\nabla^\f V(x,z)|^2  z^{1/s-2} \dz \dmuf(x) + 2 \int_{S_\f(p_0, R) }  \int_{0}^{\infty} V_z(x,z)^2 \dz \dmuf(x) < \infty.
\end{align*}
Hence, $\widetilde{V} \in \calS(Q)$. 

Notice that $\calS(Q)$ is a vector space that contains the constant functions. Also, no vanishing condition on $\partial S_\f(p_0, R_0)$ is prescribed in the definition of $F \in \calS(Q)$, although this is the case for $\widetilde{V}.$

\section{The critical-density estimate}\label{sec:CD} 

For $s \in (0,1)$ introduce
\begin{equation}\label{def:Ms:betas}
M_s:=  \frac{2^{2+s} (1-s)^s}{s^{2s}} + 8 \quad \text{and} \quad \beta_s:= 4 K_s(1 + 8 K_s),
\end{equation}
where $K_s \geq 1$ is the quasi-triangle constant in \eqref{def:Ks}, which depends only on $s$. The main result in this section is the following critical-density estimate. 

\begin{thm}\label{thm:CD}  Let $\F$ be as in \eqref{def:Phi:s}.  There exist geometric constants $\theta_0, \eps_0 \in (0,1)$ such that for every section $S_R:=S_\F(X_0, R)$ with $S_{\beta_s R}:= S_\F(X_0, \beta_s R) \subset Q$ and every $W \in \calS(Q)$ with  $L_\F W =  - \tr{( (D^2 \F)^{-1} D^2 W}) \geq 0$ pointwise in $Q^+$, the inequalities
\begin{equation*}\label{W0Rs<theta0}
W_{z,0^+}(S_{\beta_s R}) R^s \leq \theta_0
\end{equation*}
and
\begin{equation*}\label{hyp:W<1:CD}
\inf\limits_{S_R} W \leq 1
\end{equation*}
imply
\begin{equation*}\label{muF(W<Ms:CD}
\muF(\{X \in S_{\beta_s R}: W(X) < M_s \}) \geq \eps_0 \muF(S_{\beta_s R}).  
\end{equation*}
\end{thm}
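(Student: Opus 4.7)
The plan is to apply an Aleksandrov--Bakelman--Pucci (ABP) type maximum principle against an $\F$-affine barrier, running the ABP separately on $Q^+$ and $Q^-$ (where $\F$ is $C^2$) and absorbing the hyperplane contribution through the hypothesis $W_{z,0^+}(S_{\beta_s R})R^s \leq \theta_0$.

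\textbf{Setup and barrier.} I would select $\bar X \in S_R$ with $W(\bar X) \leq 1$, available by $\inf_{S_R} W \leq 1$, and use the affine $\F$-barrier $v_\F(X) := \beta_s R - \delta_\F(X_0, X)$, which is nonnegative on $S_{\beta_s R}$, vanishes on $\partial S_{\beta_s R}$, and satisfies $L_\F v_\F \equiv n+1$ pointwise on $S_{\beta_s R}\setminus Z_0$. Iterations of the quasi-triangle inequalities \eqref{def:K}--\eqref{def:Ks} with the value $\beta_s = 4K_s(1+8K_s)$ fixed in \eqref{def:Ms:betas}, combined with the doubling \eqref{def:Kd}, put $v_\F(\bar X)$ comparable to $\beta_s R$ and fit a subsection around $\bar X$ carrying a fixed $\muF$-fraction of $S_{\beta_s R}$. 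The evenness of $W$ in $z$ from property \eqref{z:symmetry}, together with the evenness of $D^2\F^{-1}$ in $z$, promotes $L_\F W \geq 0$ from $Q^+$ to all of $(Q^+\cup Q^-)\cap S_{\beta_s R}$.

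\textbf{ABP against the barrier and the $Z_0$ contribution.} Consider the comparison $\widetilde W := W - A\,v_\F$ with $A > 0$ of order $1/R$ calibrated so that simultaneously $\widetilde W(\bar X) \leq -1$ and $A\,v_\F \leq M_s-1$ on $S_{\beta_s R}$; the specific value of $M_s$ in \eqref{def:Ms:betas} is what allows this calibration against $\beta_s$. Then $\widetilde W \geq 0$ on $\partial S_{\beta_s R}$ and $L_\F \widetilde W \geq -A(n+1)$ on $S_{\beta_s R}\setminus Z_0$. An ABP--Tso type measure estimate in the spirit of \cite{FM04, MaldonadoCVPDE14}, applied to $\widetilde W$ \emph{separately} on $S_{\beta_s R}\cap Q^+$ and $S_{\beta_s R}\cap Q^-$, produces a lower bound on $\muF(\{\widetilde W \leq 0\}\cap S_{\beta_s R})$. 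Treating $Z_0$ as an internal boundary in the two-sided application introduces a jump term in the conormal derivative of $\widetilde W$ across $Z_0$; by the evenness of $W$, this jump reduces to $2W_z(\cdot,0^+)$, whose $L^\infty$-norm is precisely $W_{z,0^+}(S_{\beta_s R})$. After integrating against the $|z|^{1/s-2}$-weight (locally integrable across $z=0$ for every $s\in(0,1)$) with $\F$-diameter scaling $\operatorname{diam}_\F(S_{\beta_s R}) \sim R$ together with \eqref{def:K3}, this jump contribution is bounded by $c_4\,W_{z,0^+}(S_{\beta_s R})R^s\,\muF(S_{\beta_s R})$. The resulting inequality
\[
\muF\bigl(\{\widetilde W \leq 0\}\cap S_{\beta_s R}\bigr) \geq c_3\,\muF(S_{\beta_s R}) - c_4\,W_{z,0^+}(S_{\beta_s R})R^s\,\muF(S_{\beta_s R}),
\]
combined with $\{\widetilde W \leq 0\}\subset\{W \leq A\,v_\F\}\subset\{W < M_s\}$ and the choice $\theta_0 := c_3/(2c_4)$, $\eps_0 := c_3/2$, yields the theorem.

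\textbf{Main obstacle.} The principal difficulty is that $D^2\F^{-1}$ is degenerate (if $s>1/2$) or singular (if $s<1/2$) along $Z_0$ and $W$ fails to be $W^{2,n+1}_{\mathrm{loc}}$ there, so the classical ABP on the full section $S_{\beta_s R}$ is unavailable. The two-sided ABP described above is one way around this; an equally viable alternative is to mollify $\F$ to the $C^2$ approximants $\F_\eps$ introduced in the proof of Theorem \ref{thm:Poincare:Phi}, apply the standard ABP to the resulting non-degenerate equation, and pass to the limit as $\eps\to 0^+$ using uniform $\dcc$-bounds on $\F_\eps$ and the normal-derivative control $W_{z,0^+}(S_{\beta_s R})R^s \leq \theta_0$ to handle the limiting boundary contribution. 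Either route rests on a careful quantification of how the weight $|z|^{1/s-2}$ interacts with $\F$-sections across $Z_0$, which is precisely where the explicit constants $M_s$ and $\beta_s$ in \eqref{def:Ms:betas} originate.
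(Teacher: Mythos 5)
Your proposal correctly identifies the main obstacle (the degeneracy/singularity of $D^2\F^{-1}$ along $Z_0$ and the failure of $W$ to lie in $W^{2,n+1}_{\mathrm{loc}}$ across $Z_0$), and your treatment of sections away from $Z_0$ essentially reproduces what the paper does in Theorem \ref{thm:CD:Case1}. However, your plan for sections meeting $Z_0$ --- running ABP separately on $Q^+$ and $Q^-$ and absorbing a ``jump term in the conormal derivative'' of $\widetilde{W}$ across $Z_0$ --- has a genuine gap. The ABP maximum principle (Lemma \ref{lem:ABP1}, Corollary \ref{cor:ABP2}) is formulated through convex envelopes and normal mappings, not through integration by parts; there is no conormal-derivative flux or transmission condition to estimate. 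If one instead interprets the $Z_0$ contribution as the normal mapping picking up mass along the hypersurface (from the kink $W_z(\cdot,0^+)-W_z(\cdot,0^-)=2W_z(\cdot,0^+)$), that contribution is not of the form $c\,W_{z,0^+}(S_{\beta_s R})R^s\,\muF(S_{\beta_s R})$ and does not come with the $|z|^{1/s-2}$-weight you invoke. Moreover, a half-section ABP on $S_{\beta_s R}\cap Q^+$ requires $\widetilde{W}\geq 0$ on all of $\partial(S_{\beta_s R}\cap Q^+)$, including the flat piece on $Z_0$, and that sign is exactly what is unavailable (indeed, one needs $\widetilde{W}<0$ at $\bar X$, which may lie on $Z_0$).

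The paper's actual argument is different and cleaner. It decomposes into three cases --- sections away from $Z_0$ (Theorem \ref{thm:CD:Case1}), sections \emph{centered} on $Z_0$ (Theorem \ref{thm:CD:center:Z0}), and sections meeting $Z_0$ but not centered there (Theorem \ref{thm:CD:case3}, reduced via Lemma \ref{lem:betas}). In the centered case, instead of two half-section applications, one builds a single auxiliary function $H$ by adding to the affine barrier an extra term $-W_{z,0^+}(2R)|z|-|z|/(q_sR^s)$ (plus constants). Because this term is a cone in $z$ with apex on $Z_0$, the one-sided $z$-derivative of $H$ at every $(x,0)\in S_{2R}\cap Z_0$ is $\leq -1/(q_sR^s)<0$, which forces the contact set $\calC_{-H^-}$ to lie at positive distance from $Z_0$. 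Then Corollary \ref{cor:ABP2} applies on $\Omega=S_{2R}$ with $\Omega'=S_{2R}\setminus Z_0$, and the singular/degenerate locus is never touched. This ``repel the contact set off $Z_0$ via a cone term'' idea is the key mechanism missing from your proposal; the explicit constants $q_s$, $M_s$, $\beta_s$ arise from tracking the size of that cone term on $\overline{S_{2R}}$ and from the quasi-triangle reduction in Lemma \ref{lem:betas}. Your alternative mollification route (replacing $\F$ by $\F_\eps$) also does not close the gap on its own, since the failure of $W$ to be $C^2$ across $Z_0$ is intrinsic to $W$, not caused by the lack of smoothness of $\F$.
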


As mentioned in the introduction, due to the degeneracy of $D^2\F$ on the hyperplane $Z_0$, one cannot directly resort to the proof of Theorem 1 in \cite{Caffarelli-Gutierrez} or the one for Theorem 2 in \cite{MaldonadoJDE14}. In order to address the degeneracy or singularity of $D^2\F$ on the hyperplane $Z_0$, the proof of Theorem \ref{thm:CD} will be broken down into three cases according to the position of the sections of $\F$ with respect to $Z_0$. These three cases are illustrated in Figure \ref{fig:CDsections}.

In the case of sections intersecting $Z_0$, the $L^\infty$-norm of normal derivatives, in the sense of \eqref{def:Fz0+}, of super-solutions will play a key role in addressing the degeneracy or singularity of $L_\F$. 

\begin{center}
\begin{figure}[htp]
\includegraphics[width=5.5in]{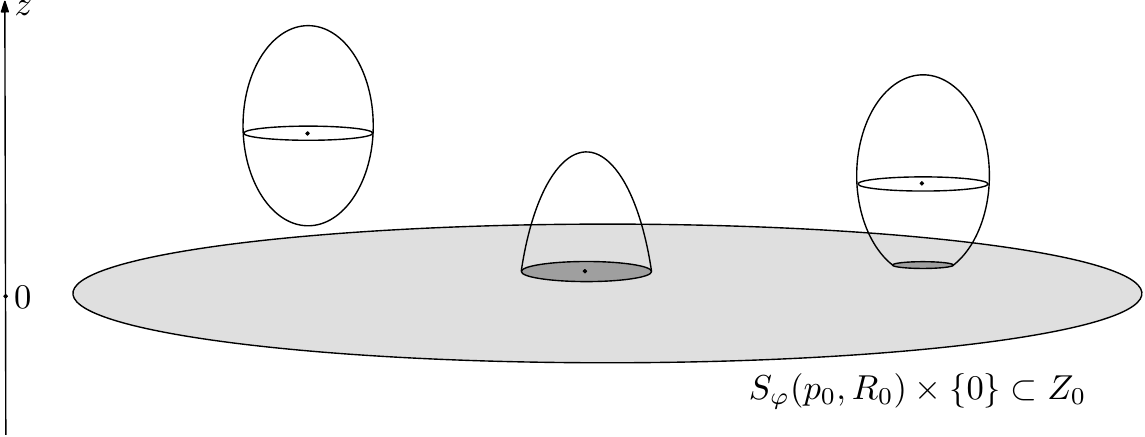}
\caption{The three cases for a section of $\F$ (within $Q$) to be considered in the proof of Theorem \ref{thm:CD}. When the section intersects $Z_0$, the control on the size of the normal derivative of a solution, as defined by \eqref{def:F0+}, will counteract the degeneracy or singularity of $L_\Phi$ at $Z_0$.}\label{fig:CDsections}
\end{figure}
\end{center}

Let us start by stating a version of the Aleksandrov--Bakelman--Pucci maximum principle on which the cases are built. 

\subsection{The ABP maximum principle}

Recall that given a domain $\Omega \subset \rN$ and $u: \Omega \to \re$, the normal mapping of $u$ at $x \in \Omega$, denoted by $\partial u(x)$, is defined as the set
\begin{equation*}
\partial u(x) := \{p \in \rN: u(y) \geq u(x) + \langle p, y-x \rangle \quad \forall y \in \Omega \}
\end{equation*}
and, given $E \subset \Omega$, 
$$
\partial u(E):= \bigcup_{x \in E} \partial u(x). 
$$
Also, let $\Gamma_u$ denote the convex envelope of $u$ in $\Omega$ and let $\mathcal{C}_{u} :=\{x \in \Omega: u(x) = \Gamma_{u}(x) \}$ denote the contact set of $u$ and $\Gamma_{u}$ in $\Omega$. In particular (see \cite[pp.13-16]{guti}), 
\begin{equation}\label{Gamma:C}
\partial \Gamma_{-u}(\calC_{-u}) = - \partial (-u) (\calC_{-u}).
\end{equation}

The next lemma allows to replace $\diam(\Omega)$ in \cite[Theorem 1.4.5]{guti} with $|\Omega|^{1/N}$ when $\Omega$ is bounded and convex (its proof is a combination of the one for Theorem 1.4.5 from \cite[p.15]{guti} and the normalization technique in \cite[Section 1]{Caffarelli-Gutierrez}). 

\begin{lem}\label{lem:ABP1} Let $\Omega \subset \rN$ be open, convex, bounded and let $U \in C(\overline{\Omega})$ satisfy $U \leq 0$ on $\partial \Omega$. Let $\Gamma_{-U}$ and  $\mathcal{C}_{-U}$ denote the convex envelope of $-U$ in $\Omega$ and the contact set of $-U$ with $\Gamma_{-U}$ in $\Omega$, respectively. Then, there exists a dimensional constant $C_N > 0$ such that 
\begin{equation*}
\max\limits_\Omega U \leq C_N |\Omega|^\frac{1}{N} |\partial (\Gamma_{-U})(\mathcal{C}_{-U})|^\frac{1}{N}.
\end{equation*}
\end{lem}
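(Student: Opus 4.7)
The plan is to reduce to a normalized situation via John's lemma and then invoke the classical ABP estimate of \cite[Theorem~1.4.5]{guti}. Since $\Omega$ is open, bounded and convex, John's lemma provides an affine map $T(x)=Ax+b$ (with $A \in \mathbb{R}^{N\times N}$ invertible) such that
\begin{equation*}
B_1(0) \subset T(\Omega) \subset B_N(0).
\end{equation*}
In particular, $\diam(T(\Omega))\le 2N$ and, comparing volumes, $|B_1(0)| \le |{\det A}|\,|\Omega| \le |B_N(0)|$, so $|{\det A}|^{-1} \le c_N |\Omega|$ for a dimensional constant $c_N$.

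Set $\widetilde U(y):= U(T^{-1}y)$ for $y\in \overline{T(\Omega)}$. Then $\widetilde U \in C(\overline{T(\Omega)})$, $\widetilde U\le 0$ on $\partial T(\Omega)$, and $\max_{T(\Omega)} \widetilde U = \max_\Omega U$. Applying the standard Aleksandrov--Bakelman--Pucci estimate (\cite[Theorem~1.4.5]{guti}) on the normalized domain yields
\begin{equation*}
\max_{T(\Omega)} \widetilde U \;\le\; C_N \,\diam(T(\Omega))\,\bigl|\partial \Gamma_{-\widetilde U}(\calC_{-\widetilde U})\bigr|^{1/N} \;\le\; 2N\,C_N\,\bigl|\partial \Gamma_{-\widetilde U}(\calC_{-\widetilde U})\bigr|^{1/N}.
\end{equation*}

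The remaining step is to transfer the normal mapping back to $\Omega$. Because $T$ is affine, the convex envelope commutes with the change of variables: $\Gamma_{-\widetilde U}(y)=\Gamma_{-U}(T^{-1}y)$, the contact set satisfies $\calC_{-\widetilde U}=T(\calC_{-U})$, and the subgradient transforms according to $\partial \Gamma_{-\widetilde U}(y)=(A^{-1})^{T}\,\partial \Gamma_{-U}(T^{-1}y)$. Consequently,
\begin{equation*}
\bigl|\partial \Gamma_{-\widetilde U}(\calC_{-\widetilde U})\bigr| \;=\; |{\det A}|^{-1}\,\bigl|\partial \Gamma_{-U}(\calC_{-U})\bigr|.
\end{equation*}
Combining this with the previous display and the volume bound $|{\det A}|^{-1}\le c_N|\Omega|$ gives
\begin{equation*}
\max_\Omega U \;\le\; 2N\,C_N\,\bigl(c_N|\Omega|\bigr)^{1/N}\,\bigl|\partial \Gamma_{-U}(\calC_{-U})\bigr|^{1/N},
\end{equation*}
which is the desired inequality after absorbing the dimensional constants.

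The only mildly subtle point will be the bookkeeping of the covariance of $\partial \Gamma_{-U}$ under the affine change of variables (including the identification $\partial \Gamma_{-U}(\calC_{-U})=-\partial(-U)(\calC_{-U})$ recorded in \eqref{Gamma:C}); everything else follows from John's lemma and the classical ABP inequality.
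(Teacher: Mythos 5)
Your proposal is correct and matches the paper's (only sketched) proof: the paper states that the lemma follows by combining the proof of Theorem~1.4.5 in Guti\'errez with the normalization technique from Caffarelli--Guti\'errez, which is exactly the John's-lemma normalization, application of the classical ABP estimate on the normalized domain, and transfer of the normal mapping via the determinant identity that you carry out. The covariance bookkeeping for $\partial\Gamma_{-U}$ under the affine map and the volume comparison $|\det A|^{-1}\le c_N|\Omega|$ are handled correctly.
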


We will be using the following consequence of Lemma \ref{lem:ABP1}. 

\begin{cor}\label{cor:ABP2}  Let $\Omega \subset \rN$ be open, convex, and bounded. Suppose that $H \in C(\overline{\Omega})$ satisfies the following conditions
\begin{enumerate}[(i)]
\item $H \geq 0$ on $\partial \Omega$,
\item there is an open set $\Omega' \subset \Omega$ such that  $H \in C^2(\Omega')$ and  $\calC_{-H^-} \subset \Omega'$, where $H^-:=\max\{0, -H\}$. 
\end{enumerate}
Then,
\begin{equation*}
\max\limits_{\Omega} H^- \leq C_N \, |\Omega|^\frac{1}{N} \left( \int_{\calC_{-H^-}} |\det D^2 H(X)| \,dX\right)^\frac{1}{N}. 
\end{equation*}
\end{cor}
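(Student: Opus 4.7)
My plan is to derive this corollary directly from Lemma \ref{lem:ABP1} applied to $U := H^-$, and then to bound the normal-mapping image appearing on the right-hand side by the integral of $|\det D^2 H|$ over the contact set. The only hypothesis of Lemma \ref{lem:ABP1} that needs checking is that $U = H^- \leq 0$ on $\partial \Omega$; but since $H \geq 0$ there, we in fact have $H^- \equiv 0$ on $\partial \Omega$, so Lemma \ref{lem:ABP1} applies and yields
$$\max_\Omega H^- \leq C_N |\Omega|^{1/N} |\partial \Gamma_{-H^-}(\calC_{-H^-})|^{1/N}.$$
Combining this with the identity \eqref{Gamma:C}, which gives $|\partial \Gamma_{-H^-}(\calC_{-H^-})| = |\partial(-H^-)(\calC_{-H^-})|$, reduces matters to estimating the normal-mapping image of $-H^-$ over the contact set.

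The key step is to split $\calC_{-H^-}$ according to the sign of $H$. On $\calC_{-H^-} \cap \{H \geq 0\}$ one has $-H^-(X) = 0$, which is the maximum value of the nonpositive function $-H^-$; together with the fact that hypothesis (ii) places $\calC_{-H^-}$ inside the open subset $\Omega' \subset \Omega$ (so every such $X$ is an interior point of $\Omega$), a supporting-plane argument forces $\partial(-H^-)(X) = \{0\}$ there, contributing a null set to the image. On $\calC_{-H^-} \cap \{H < 0\}$, continuity keeps $H$ negative on a neighborhood, so $-H^- = H$ locally; since $H \in C^2(\Omega')$, the only possible element of the normal mapping at such $X$ is $\nabla H(X)$. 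Hence
$$\partial(-H^-)(\calC_{-H^-}) \subset \{0\} \cup \nabla H\bigl(\calC_{-H^-} \cap \{H<0\}\bigr).$$

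To conclude, I would apply the standard area-formula/Sard-type estimate to the $C^1$ gradient map $X \mapsto \nabla H(X)$ on the measurable set $\calC_{-H^-} \cap \{H < 0\} \subset \Omega'$, giving
$$\bigl|\nabla H\bigl(\calC_{-H^-} \cap \{H<0\}\bigr)\bigr| \leq \int_{\calC_{-H^-} \cap \{H<0\}} |\det D^2 H(X)|\, dX \leq \int_{\calC_{-H^-}} |\det D^2 H(X)|\, dX.$$
Substituting this back into the bound from Lemma \ref{lem:ABP1} produces the stated inequality.

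The step requiring the most care is the subgradient analysis at contact points where $H \geq 0$: it is precisely hypothesis (ii) — that $\calC_{-H^-}$ lies inside the open set $\Omega'$ compactly contained in $\Omega$ — which rules out nonzero subgradients there and confines the contribution of $\{H\geq 0\}$ to a set of measure zero. Without this interior containment one could not discard these contact points and the passage from the normal-mapping image to the Hessian integral would fail.
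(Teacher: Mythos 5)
Your proof is correct and takes essentially the same route as the paper: apply Lemma~\ref{lem:ABP1} to $H^-$, invoke the identity~\eqref{Gamma:C}, reduce the normal-mapping image on $\calC_{-H^-}$ to the image under the $C^1$ map $\nabla H$, and conclude with the area (change-of-variables) formula. The only cosmetic difference is that you split the contact set by the sign of $H$, whereas the paper avoids this by noting that on $\calC_{-H^-}$ one has $-H^-(X)=H(X)$ and $-H^-(Y)\le H(Y)$ for all $Y$, which gives the inclusion $\partial(-H^-)(\calC_{-H^-})\subset\partial H(\calC_{-H^-})$ in one stroke.
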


\begin{proof} Lemma \ref{lem:ABP1} applied to $H^-$ yields
\begin{equation}\label{ABP:Gamma:H}
\max\limits_{\Omega} H^- \leq C_N |\Omega|^\frac{1}{N} |\partial (\Gamma_{-H^-})(\calC_{-H^-})|^\frac{1}{N}.
\end{equation}
By \eqref{Gamma:C} with $u = H^-$, 
\begin{equation}\label{GammaH=HH}
\partial (\Gamma_{-H^-}) (\calC_{-H^-}) = - \partial (-H^-) (\calC_{-H^-}).
\end{equation}
Now, for $X \in\calC_{-H^-}$ the fact that $H^- = 0$ on $\partial \Omega$ yields $-H^-(X) = H(X) \leq 0$ (with $H(X)<0$ unless $H \equiv 0$). Let us see that this implies the inclusion
\begin{equation}\label{incl:HH}
\partial (-H^-) (\calC_{-H^-}) \subset  \partial H (\calC_{-H^-}).
\end{equation} 
Indeed, given $X \in \calC_{-H^-}$ and $P \in \partial (-H^-)(X)$ the definition of normal mapping gives
\begin{equation}\label{P:in:HX}
- H^-(Y) \geq -H^-(X) + \langle P, Y- X \rangle \quad \forall Y \in \Omega,
\end{equation}
and since  $-H^-(X) = H(X)$ and $- H^-(Y) \leq H(Y)$ for every $Y \in \Omega$, the inequality \eqref{P:in:HX} implies that $P \in \partial H(X)$, thus proving \eqref{incl:HH}. By combining \eqref{GammaH=HH} and \eqref{incl:HH}, we have
$$
\partial (\Gamma_{-H^-}) (\calC_{-H^-}) \subset - \partial H (\calC_{-H^-}),
$$ 
so that, from \eqref{ABP:Gamma:H}, 
\begin{equation*}
\max\limits_{\Omega} H^- \leq C_N |\Omega|^\frac{1}{N} |\partial H (\calC_{-H^-})|^\frac{1}{N}.
\end{equation*}
Finally, by the assumptions $\calC_{-H^-} \subset \Omega'$ and $H \in C^2(\Omega')$, the inequality
\begin{equation*}
|\partial H (\calC_{-H^-})| \leq \int_{\calC_{-H^-}} |\det D^2 H(X)|\, dX
\end{equation*}
follows from the usual formula of change of variables and the proof is complete.
\end{proof}

\subsection{The case of sections of $\F$ away from $Z_0$.}

\begin{thm}\label{thm:CD:Case1} 
 Let $\F$ be as in \eqref{def:Phi:s}. Let $X_0 \in \rn \times \re$ and $R > 0$ such that $S_\F( X_0, 2R) \subset \subset Q$. Set $S_R:= S_\F( X_0, R)$, $S_{2R} := S_\F( X_0, 2R)$ and suppose that $\overline{S_{2R}} \cap Z_0 = \emptyset$. Then, there exists a geometric constant $\eps_1 \in (0,1)$ such that for every $W \in \calS(Q)$ with $L_\F W = - \tr{( (D^2 \F)^{-1} D^2 W})\geq 0$ and $W \geq 0$ in $\overline{S_{2R}}$, the inequality
\begin{equation}\label{cond:W<1}
\inf\limits_{S_R} W \leq 1
\end{equation}
implies
\begin{equation}\label{muF(W<4)}
\muF(\{X \in S_{2R}: W(X) < 4  \}) \geq \eps_1 \muF(S_{2R}).  
\end{equation}
\end{thm}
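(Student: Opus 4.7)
My plan is to adapt the classical Caffarelli--Guti\'errez critical-density argument, which combines a barrier built from the Monge--Amp\`ere quasi-distance $\delta_\F$ with the Aleksandrov--Bakelman--Pucci maximum principle in the form of Corollary \ref{cor:ABP2}. The crucial simplification in Case 1 is that $\overline{S_{2R}}\cap Z_0 = \emptyset$, so $\F\in C^3$ with $D^2\F>0$ on a neighborhood of $\overline{S_{2R}}$, and $W\in C^2$ there by definition of $\calS(Q)$. Therefore the singular behavior of $(D^2\F)^{-1}$ on $Z_0$ plays no role in this case and the standard machinery applies directly.

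First I would introduce the barrier $v_\F(X) := 2R - \delta_\F(X_0,X)$ on $\overline{S_{2R}}$: it is $C^2$, nonnegative, vanishes on $\partial S_{2R}$, satisfies $v_\F\geq R$ on $\overline{S_R}$, and, from $D^2 v_\F = -D^2\F$, obeys $L_\F v_\F = n+1$ pointwise. By continuity, $W$ attains its infimum on $\overline{S_R}$ at some $X_1\in S_{2R}$ with $W(X_1)\leq 1$. Setting $C:=R/2$ and $\Phi := v_\F - CW$, we get $\Phi\leq 0$ on $\partial S_{2R}$, $\Phi(X_1)\geq R/2>0$, and $L_\F\Phi = (n+1) - C L_\F W \leq n+1$ because $L_\F W\geq 0$.

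Second, I would apply Corollary \ref{cor:ABP2} to $H := -\Phi$ (with $\Omega'$ a neighborhood of $\overline{S_{2R}}$ in $Q\setminus Z_0$), obtaining
$$\frac{R}{2} \leq \max_{S_{2R}} \Phi^+ \leq C_{n+1}\, |S_{2R}|^{1/(n+1)} \left(\int_{\calC_{-\Phi^+}} |\det D^2\Phi|\,dX\right)^{1/(n+1)}.$$
On $\calC_{-\Phi^+}\cap\{\Phi>0\}$, which lies inside $S_{2R}$ since $\Phi\leq 0$ on the boundary, $\Phi$ coincides with its concave envelope, hence $D^2\Phi\leq 0$. Writing $|\det D^2\Phi| = \muF\,|\det((D^2\F)^{-1}D^2\Phi)|$ and applying AM--GM to the eigenvalues of the symmetric positive semi-definite matrix $(D^2\F)^{-1/2}(-D^2\Phi)(D^2\F)^{-1/2}$ gives
$$|\det D^2\Phi| \leq \muF\left(\frac{L_\F\Phi}{n+1}\right)^{n+1} \leq \muF.$$
Combining this with the ABP bound and the universal comparison $|S_{2R}|\,\muF(S_{2R}) \leq K_3(2R)^{n+1}$ from \eqref{def:K3} to convert Lebesgue to Monge--Amp\`ere measure yields
$$\muF\bigl(\calC_{-\Phi^+}\cap\{\Phi>0\}\bigr) \geq \eps_1\,\muF(S_{2R}), \qquad \eps_1 := \frac{1}{4^{n+1}\,C_{n+1}^{n+1}\,K_3}.$$
Since on $\{\Phi>0\}$ one has $W < v_\F/C \leq 2R/C = 4$, the contact subset above is contained in $\{W<4\}\cap S_{2R}$, which establishes \eqref{muF(W<4)} with $\eps_1$ geometric.

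The main technical obstacle I anticipate is the careful ABP bookkeeping around $\partial S_{2R}$: the full contact set $\calC_{-\Phi^+}$ generally includes all of $\partial S_{2R}$ (where both $\Phi^+$ and its convex envelope vanish), but intersecting with the open set $\{\Phi>0\}$ cleanly removes the boundary, keeps $D^2\Phi$ classically defined, preserves the AM--GM estimate, and still captures a measure of order $\muF(S_{2R})$ through the jump $\Phi(X_1)\geq R/2$. The remaining bookkeeping---tracking the constants to confirm that $\eps_1$ depends only on $n$, $s$, and $C_d$ in the sense the paper calls ``geometric''---is routine.
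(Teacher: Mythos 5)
Your proof is correct and follows the same route as the paper: your $\Phi = v_\F - \tfrac{R}{2}W$ is exactly $-\tfrac{R}{2}$ times the paper's auxiliary function $H = W + 4(\delta_\F/2R - 1)$, and the ABP--plus--barrier argument, the AM--GM bound $|\det D^2\Phi|\leq\muF(L_\F\Phi/(n+1))^{n+1}\leq\muF$, and the use of \eqref{def:K3} to pass to $\muF$-measure all coincide with the paper. One small clarification: the contact set $\calC_{-\Phi^+}$ lies in the open set $\Omega$ by definition and is in fact entirely contained in $\{\Phi>0\}$ (a contact point with $\Phi\leq 0$ would force the convex envelope of $-\Phi^+$ to vanish identically, contradicting $\Phi(X_1)\geq R/2$), so the ``bookkeeping around $\partial S_{2R}$'' you flag as an obstacle is automatic rather than something that needs to be handled by intersecting with $\{\Phi>0\}$.
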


\begin{proof} Introduce the auxiliary function
\begin{equation*}\label{def:v}
H(X):= W(X) + 4 \left(\frac{\delta_\F(X_0, X)}{2R} - 1 \right)\quad \forall X \in \re^{n+1},
\end{equation*}
where the function
$$
X \mapsto \delta_\F(X_0, X):= \F(X) - \F(X_0) - \langle \nabla \F(X_0), X - X_0 \rangle,
$$
is convex with $D^2  \delta_\F(X_0, \cdot) = D^2 \F$. By \eqref{cond:W<1}, there is $X_1 \in S_\F(X_0, R)$ such that $W(X_1) < 1$, and then
$$
H(X_1) = W(X_1) + 4  \left(\frac{\delta_\F(X_0, X_1)}{2R} - 1 \right) < 1 + 4 \left(\frac{1}{2} - 1\right) = -1.
$$
Now, the ABP maximum principle in Corollary \ref{cor:ABP2} applied to the function $H^-$ on the convex set $\Omega =\Omega'=S_{2R}$  (notice that $H = W \geq 0$ on $\partial S_{2R}$) yields
\begin{equation*}
1 < H^-(X_1)^{n+1} \leq C_n |S_{2R}| \int_{\calC_{-H^-}} |\det D^2 H(X)| \dX.
\end{equation*}
In particular, on the contact set $\calC_{-H^-}$ we have that $H$ is negative, and then
\begin{align*}
\calC_{-H^-} \subset \{X \in S_{2R}: H(X) < 0\}& =  \{ X \in S_{2R}: W(X) < 4 (1 -\delta_\F(X_0,X)/2R)\}\\
& \subset \{ X \in S_{2R}: W(X) < 4\}.
\end{align*}
Also, $D^2 H \geq 0$ on $\calC_{-H^-}$ so that, recalling that $A_\F:= (\det D^2 \F) D^2 \F^{-1}$ is positive in $S_{2R}$,  on $\calC_{-H^-}$ we have
\begin{align*}
0 \leq \det D^2 H &= \det (A_\F D^2 H) (\det A_\F)^{-1} \leq \left[\frac{\tr(A_\F D^2 H)}{n+1} \right]^{n+1}  (\det D^2 \F)^{-n}\\
& = \left[\frac{ \tr(A_\F D^2 W) + \frac{2}{R} \tr(A_\F D^2\F) }{n+1} \right]^{n+1}  (\det D^2 \F)^{-n}\\
& \leq \left(\frac{ 2 \det D^2 \F }{R} \right)^{n+1}  (\det D^2 \F)^{-n} = \left(\frac{ 2 }{R} \right)^{n+1}  \det D^2 \F.
\end{align*}
Therefore,
\begin{align*}
1& \leq C_n |S_{2R}| \left(\frac{ 2 }{R} \right)^{n+1}  \int_{\calC_{-H^-}}  \det D^2 \F(X) \dX \\
& \leq C_n |S_{2R}| \left(\frac{ 2 }{R} \right)^{n+1}  \muF(\{X \in S_{2R}: W(X) < 4\}).
\end{align*}
 Then, by recalling the definition of $K_3$ in \eqref{def:K3}, 
\begin{align*}
1 &\leq  \frac{C_n 4^{n+1} K_3}{\muF(S_{2R})} \muF(\{X \in S_{2R}: W(X) < 4\})
\end{align*}
and \eqref{muF(W<4)} follows with $\eps_1:= (C_n 4^{n+1} K_3)^{-1}$.
\end{proof}

\subsection{The case of sections of $\F$ centered at $Z_0$.}

For each $s \in (0,1)$, set
\begin{equation*}\label{def:qs}
q_s :=  \frac{2^s (1-s)^s}{s^{2s}}.
\end{equation*}

\begin{thm}\label{thm:CD:center:Z0}
Let $\F$ be as in \eqref{def:Phi:s}. Let $x_0 \in \rn$ and $R > 0$ such that $S_\F( (x_0, 0), 2R) \subset \subset Q$. Put $S_R:= S_\F( (x_0, 0), R)$ and $S_{2R} := S_\F( (x_0, 0), 2R)$. Suppose that $W \in \calS(Q)$ satisfies $L_\F W =  - \tr{( (D^2 \F)^{-1} D^2 W})  \geq 0$ and $W \geq 0$ pointwise in $S_{2R} \setminus Z_0$. 

Then, there exist geometric constants $\theta_2, \eps_2 \in (0,1)$ such that the inequalities
\begin{equation}\label{W0Rs<eps1}
W_{z,0^+}(2R) R^s \leq \theta_2
\end{equation}
and
\begin{equation}\label{hyp:W<1}
\inf\limits_{S_R} W \leq 1
\end{equation}
imply
\begin{equation}\label{muF(W<4qs8}
\muF(\{X \in S_{2R}: W(X) < 4 q_s + 8 \}) \geq \eps_2 \muF(S_{2R}).  
\end{equation}
\end{thm}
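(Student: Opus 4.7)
The plan is to mimic the template of Theorem \ref{thm:CD:Case1}, rescaling the barrier so that it matches the target $M_s = 4q_s + 8$. With $\alpha := 2q_s + 4 = M_s/2$, introduce the auxiliary function
\[
H(X) := W(X) + \alpha\!\left(\frac{\delta_\F(X_0, X)}{R} - 2\right), \qquad X \in S_{2R}.
\]
Then $H = W \geq 0$ on $\partial S_{2R}$ (where $\delta_\F = 2R$ and $W \geq 0$ by continuity and the hypothesis $W \geq 0$ in $S_{2R}\setminus Z_0$), while the hypothesis $\inf_{S_R} W \leq 1$ produces a point $X_1 \in S_R$ with $W(X_1) \leq 1$ and hence $H(X_1) \leq 1 + \alpha - M_s = -(2q_s + 3) < 0$.

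Next I would apply the ABP maximum principle (Corollary \ref{cor:ABP2}) to $H$ on $\Omega = S_{2R}$. On $S_{2R}\setminus Z_0$ the function $H$ is $C^2$, and on the portion of the contact set $\calC_{-H^-}$ contained there the arithmetic--geometric mean inequality combined with the supersolution bound $\tr(A_\F D^2 W) \leq 0$ (valid pointwise in $Q^+\cup Q^-$) yields
\[
\det D^2 H \leq \Bigl(\frac{\tr(A_\F D^2 H)}{n+1}\Bigr)^{\!n+1}(\det D^2 \F)^{-n} \leq \Bigl(\frac{\alpha}{R}\Bigr)^{\!n+1}\det D^2 \F,
\]
exactly as in Theorem \ref{thm:CD:Case1}. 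Assuming this estimate also controls the contribution of $\calC_{-H^-}\cap Z_0$ (see below), the inclusion $\calC_{-H^-}\subset\{H<0\}\subset\{W < M_s\}$ together with $|S_{2R}|\,\muF(S_{2R}) \leq K_3(2R)^{n+1}$ from \eqref{def:K3} produces \eqref{muF(W<4qs8} with an explicit geometric
\[
\eps_2 = \frac{(2q_s+3)^{n+1}}{C_n K_3\, 2^{n+1}(2q_s+4)^{n+1}}.
\]

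The main obstacle is justifying the ABP computation on the part of the contact set $\calC_{-H^-}$ that may lie on $Z_0$, where $H$ fails to be $C^2$: by $z$-evenness $W$ has a $z$-kink of magnitude $2|W_z(x,0^+)|$ across $Z_0$, and $h_s''(z)= |z|^{1/s-2}$ is degenerate when $0<s<1/2$ and singular when $1/2<s<1$, so $\delta_\F$ itself fails to be $C^2$ on $Z_0$. I would handle this via the $C^2$ regularization $\F_\eps := \f + h_{s,\eps}$ from Section \ref{sec:PI:F}, form $H_\eps$ by replacing $\delta_\F$ with $\delta_{\F_\eps}$ in the barrier, apply Corollary \ref{cor:ABP2} to $H_\eps$ on the smooth section $S_{\F_\eps}(X_0, 2R)$, and pass to the limit $\eps \to 0^+$ using $A_{\F_\eps}\to A_\F$ and $\det D^2 \F_\eps \to \det D^2 \F$ off $Z_0$. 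The normal-derivative hypothesis $W_{z,0^+}(2R)\,R^s \leq \theta_2$ is precisely what I expect to use to absorb the distributional contribution of $D^2 W$ concentrated on $Z_0$ (a measure whose $z$-mass per unit of $(x,0)$ is at most $2\theta_2/R^s$) into the bulk term $(\alpha/R)(n+1)\det D^2 \F_\eps$, uniformly in $\eps$. The smallness of $\theta_2$, chosen depending only on $n$, $s$ and $K_3$, is what would make this absorption effective and would determine both $\theta_2$ and $\eps_2$ as geometric constants.
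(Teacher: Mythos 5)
Your setup correctly identifies the obstruction but not the fix. With the pure barrier $H = W + \alpha(\delta_\F/R - 2)$, the function $H$ inherits from $W$ (which is even in $z$) a convex kink of magnitude $2|W_z(x,0^+)|$ along $Z_0$, so the contact set $\calC_{-H^-}$ can intersect $Z_0$, where $H$ is not $C^2$ and the hypotheses of Corollary \ref{cor:ABP2} fail. Your proposed cure --- smoothing $\F$ via $\F_\eps$ and then ``absorbing'' the $Z_0$-contribution into the bulk --- is not grounded: the source of trouble is the kink in $W$, not the singularity of $h_s''$, and the normal image $\partial H(\calC_{-H^-}\cap Z_0)$ can carry positive Lebesgue mass that the change-of-variables bound $\int_{\calC_{-H^-}}|\det D^2 H|\,dX$ simply cannot see. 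The hypothesis $W_{z,0^+}(2R)R^s \leq \theta_2$ is never actually brought to bear in your sketch, and the absorption step is never carried out.

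The paper resolves this by modifying the barrier so that the contact set is pushed off $Z_0$ entirely. With $Q_s := 4 q_s W_{z,0^+}(2R) R^s + 8$, it sets
\[
H(X) := W(X) + Q_s\Bigl(\tfrac{\delta_\F((x_0,0),X)}{2R} - 1\Bigr) - W_{z,0^+}(2R)\,|z| - \tfrac{|z|}{q_s R^s} + W_{z,0^+}(2R)\, q_s R^s + 1.
\]
The term $-W_{z,0^+}(2R)\,|z|$ neutralizes the kink of $W$ on $Z_0$, and $-|z|/(q_s R^s)$ overshoots so that $\partial H/\partial z^+(x,0) \leq -1/(q_s R^s) < 0$ strictly for every $(x,0)\in S_{2R}\cap Z_0$; since $H$ is even in $z$, this makes $(x,0)$ a strict local $z$-maximum of $-H^-$, so no supporting plane of $\Gamma_{-H^-}$ can touch there, and $\calC_{-H^-}$ lies at positive distance from $Z_0$. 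The additive constants $W_{z,0^+}(2R)\, q_s R^s + 1$ restore $H\geq 0$ on $\partial S_{2R}$, using the elementary bound $|z|\leq q_s R^s$ on $\overline{S_{2R}}$ (which you would have needed anyway). Corollary \ref{cor:ABP2} then applies cleanly with $\Omega'=S_{2R}\setminus Z_0$, and the determinant estimate runs exactly as in Theorem \ref{thm:CD:Case1} with $Q_s$ in place of your $\alpha$. The smallness hypothesis \eqref{W0Rs<eps1} is used only at the very end, to bound $Q_s^{n+1}\leq (8q_s\theta_2)^{n+1}+16^{n+1}$ and choose $\theta_2$ so that the first summand can be absorbed. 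The missing idea in your argument is precisely this $|z|$-modification of the barrier.
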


\begin{proof}
Notice that the expression for $\delta_\F$ in \eqref{exp:delta:F} and the fact that $h_s(0) = h_s'(0) =0$ give 
\begin{align*}
\delta_\F((x_0, 0), X) = \f(x) - \f(x_0) - \langle \nabla \f(x_0), x-x_0 \rangle + h_s(z) \quad \forall X =(x,z) \in \re^{n+1},
\end{align*}
which makes all sections centered at $Z_0$ symmetric with respect with $z$. Let
\begin{equation}\label{def:Qs}
Q_s:= 4 q_s W_{z,0^+}(2R) R^s + 8. 
\end{equation}
For $X =(x,z) \in \overline{S_{2R}}$, introduce the auxiliary function $H \in C(\overline{S_{2R}})$ as
$$
H(X) := W(X) + Q_s  \left(\frac{\delta_\F((x_0, 0), X)}{2R} - 1 \right) - W_{z,0^+}(2R) |z|- \frac{|z|}{q_s R^s} + W_{z,0^+}(2R) q_s R^s  + 1
$$
which makes $H$ symmetric in $z$ as well (that is, $H(x,z) = H(x, -z)$ for every $(x,z) \in S_{2R}$. Now, by the inclusions \eqref{S:vs:SxS}, 
$$
S_{2R} \subset S_\f(x_0, 2R) \times S_{h_s}(0, 2R)
$$
so that  $X =(x,z) \in \overline{S_{2R}}$ implies $z \in \overline{S_{h_s}(0, 2R)}$ and then $h_s(z) \leq 2R$ (because $h_s(0) = h_s'(0) =0$). Consequently, from the definition of $h_s(z)$, $X =(x,z) \in \overline{S_{2R}}$ implies
\begin{equation}\label{z<qsRs}
|z| \leq \frac{2^s (1-s)^s}{s^{2s}} R^s =: q_s R^s.
\end{equation}
In particular,  for $X =(x,z) \in \partial S_{2R}$ (where it holds that $\delta_\F((x_0, 0), X) = 2R$) and using that $W \geq 0$ and
\begin{equation}\label{01:order>0}
 - W_{z,0^+}(2R) |z| + W_{z,0^+}(2R) q_s R^s \geq 0 \quad \text{and} \quad - \frac{|z|}{q_s R^s}   + 1 \geq 0,
\end{equation}
we have
$$H(X)= W(X)  - W_{z,0^+}(2R) |z|- \frac{|z|}{q_s R^s} + W_{z,0^+}(2R) q_s R^s  + 1\geq 0.$$
On the other hand, for every $(x, 0) \in S_{2R} \cap Z_0$ (and using $h_s'(0)=0$ again), 
$$
\frac{\partial H}{\partial z^+}(x, 0) =\frac{\partial W}{\partial z^+}(x, 0)  - W_{z,0^+}(2R) - \frac{1}{q_s R^s} \leq - \frac{1}{q_s R^s} < 0.
$$
As a consequence, the convex envelope of $-H^-$ in $S_{2R}$ cannot touch $-H^-$ on $Z_0$; moreover, the contact set $\calC_{-H^-}$ lies at a positive distance from $Z_0$.  Therefore, $\calC_{-H^-} \subset S_{2R} \setminus Z_0$. By Corollary \ref{cor:ABP2} applied to $H$ with $\Omega'= S_{2R} \setminus Z_0$ and $\Omega = S_{2R}$ (notice that $H \in C^2(S_{2R} \setminus Z_0)$)  we obtain
\begin{equation}\label{ABP:H:S2R}
\Big(\max\limits_{S_{2R}} H^-\Big)^{n+1}\leq C_n \, |S_{2R}| \int_{\calC_{-H^-}} |\det D^2 H(X)| \,dX. 
\end{equation}
For $X$ in the contact set $\calC_{-H^-} \subset S_{2R} \setminus Z_0$ we have
\begin{align*}
|\det D^2 H(X)|& = \muF(X) \det D^2 \F(X)^{-1} |\det D^2 H(X)|\\
& \leq \muF(X) \left(\frac{\tr(D^2 \F(X)^{-1} D^2 H(X))}{n+1}\right)^{n+1}.
\end{align*}
In $S_{2R} \setminus Z_0$ we have $D^2H = D^2 W + \frac{Q_s}{2R} D^2\F$ and $-L_\F(W) = \tr(D^2 \F(X)^{-1} D^2 W (X)) \leq 0$. So that for $X \in \calC_{-H^-} \subset S_{2R} \setminus Z_0$, 
$$
|\det D^2 H(X)| \leq  \left( \frac{Q_s}{2R}\right)^{n+1} \muF(X),
$$
which combined with \eqref{ABP:H:S2R} gives
\begin{equation}\label{ABP:H:Qs}
\Big(\max\limits_{S_{2R}} H^-\Big)^{n+1} \leq C_n  \left( \frac{Q_s}{2R}\right)^{n+1}  |S_{2R}| \, \muF( \calC_{-H^-}). 
\end{equation}
Now, by \eqref{hyp:W<1}, there exists $X_1=(x_1, z_1) \in S_R$ such that $W(X_1) \leq 1$ and then
\begin{align*}
& H(X_1)\\
&= W(X_1) +  Q_s  \left(\frac{\delta_\F((x_0, 0), X_1)}{2R} - 1 \right) - W_{z,0^+}(2R) |z_1|- \frac{|z_1|}{q_s R^s} + W_{z,0^+}(2R) q_s R^s  + 1\\
& < 1 - \frac{Q_s}{2}   - W_{z,0^+}(2R) |z_1|- \frac{|z_1|}{q_s R^s} + W_{z,0^+}(2R) q_s R^s  + 1\\
& < 3 +  2 q_s W_{z,0^+}(2R)  R^s  - \frac{Q_s}{2},
\end{align*}
where we have used that, from \eqref{z<qsRs}, 
$$
0 \leq 1 - \frac{|z_1|}{q_s R^s} \leq 1 + \frac{|z_1|}{q_s R^s} \leq 2
$$
and 
$$
0 \leq W_{z,0^+}(2R) (q_s R^s - |z_1|) \leq W_{z,0^+}(2R) (q_s R^s + |z_1|) \leq 2 q_s W_{z,0^+}(2R)  R^s.
$$
By the definition of $Q_s$ in \eqref{def:Qs} we get $3 +  2 q_s W_{z,0^+}(2R)  R^s  - \frac{Q_s}{2} = -1$ and then $H^-(X_1) > 1$. 

Then, from \eqref{ABP:H:Qs}, the fact that $H^-(X_1) > 1$ and \eqref{def:K3}, we deduce 
\begin{equation}\label{muFS<mufFC}
\muF(S_{2R}) \leq C_n  K_3 Q_s^{n+1}   \muF( \calC_{-H^-}) . 
\end{equation} 
Next, on the contact set we have $H \leq 0$, so that by using \eqref{W0Rs<eps1} we get (recalling \eqref{01:order>0})
\begin{align*}
 \calC_{-H^-} \subset \{X \in S_{2R} : H(X) \leq 0\} & \subset \{X \in S_{2R} : W(X) \leq Q_s  \}\\
 & \subset  \{X \in S_{2R} : W(X) \leq 4q_s + 8  \}. 
\end{align*}
On the other hand,
\begin{align*}
Q_s^{n+1} = (4 q_s W_{z,0^+}(2R) R^s + 8)^{n+1} \leq   (4 q_s \theta_2 + 8)^{n+1}  \leq  (8 q_s \theta_2)^{n+1} + 16^{n+1},
\end{align*}
so that \eqref{muFS<mufFC} implies
\begin{equation*}
\muF(S_{2R}) \leq C_n  K_3  (8 q_s \theta_2)^{n+1}  \muF(S_{2R}) +  C_n  K_3 16^{n+1} \muF( \calC_{-H^-}) 
\end{equation*}
and by choosing $\theta_2 \in (0,1)$ such that $ C_n  K_3  (8 q_s \theta_2)^{n+1}  \leq 1/2$, the inequality \eqref{muF(W<4qs8} follows with 
\begin{equation*}
\eps_2 := \frac{1}{2 C_n  K_3 16^{n+1}}.
\end{equation*}
\end{proof}

\subsection{The case of sections of $\F$ intersecting $Z_0$}

When a section $S_\F(X_0, R)$, not necessarily centered somewhere at  $\rn \times Z_0$, satisfies $\overline{S_\F(X_0, R)} \cap Z_0 \neq \emptyset$, the first step will be to relate it to a section centered at $\rn \times Z_0$ of comparable height. More precisely, we have

\begin{lem}\label{lem:betas} Given a section $S_\F(X_0, R)$ centered at $X_0=(x_0, z_0)$ with  $\overline{S_\F(X_0, R)} \cap Z_0 \neq \emptyset$, there exists $R_r \in (R, 2K_s R)$ such that
\begin{equation*}
S_\F(X_0, R) \subset S_\F((x_0, 0), 2R_r) \subset  S_\F((x_0, 0), 4R_r) \subset  S_\F(X_0, \beta_s R/2),
\end{equation*}
where $K_s \geq 1$, depending only on $s$, is the quasi-triangle constant for $h_s$ in \eqref{def:Ks} and $\beta_s$, also depending only on $s$, is as in \eqref{def:Ms:betas}.
\end{lem}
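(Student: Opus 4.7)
The plan is to reduce everything to the tensor decomposition $\delta_\F(X_0, X) = \delta_\f(x_0, x) + \delta_{h_s}(z_0, z)$ from \eqref{exp:delta:F} and the one-dimensional quasi-triangle inequality \eqref{def:Ks} for $h_s$. The hypothesis $\overline{S_\F(X_0, R)} \cap Z_0 \neq \emptyset$ provides some $(x^*, 0)$ with $\delta_\F(X_0, (x^*, 0)) \leq R$, and dropping the nonnegative term $\delta_\f(x_0, x^*)$ yields the single scalar fact $\delta_{h_s}(z_0, 0) \leq R$, which is the only consequence of the intersection hypothesis entering the proof.

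For the first inclusion, given $X = (x,z) \in S_\F(X_0, R)$ I would apply \eqref{def:Ks} with intermediate point $z'' = z_0$ to bound
\begin{equation*}
\delta_{h_s}(0, z) \leq K_s\bigl(\delta_{h_s}(z_0, 0) + \delta_{h_s}(z_0, z)\bigr) < 2 K_s R,
\end{equation*}
which together with $\delta_\f(x_0, x) < R$ gives $\delta_\F((x_0, 0), X) < (1 + 2K_s) R$. The choice $R_r := (1 + 2K_s) R/2$ then lies in $(R, 2K_s R)$ (both inequalities reduce to $K_s > 1/2$, guaranteed by $K_s \geq 1$), and proves $S_\F(X_0, R) \subset S_\F((x_0, 0), 2R_r)$; the middle inclusion is trivial.

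For the last inclusion, fix $X = (x,z) \in S_\F((x_0, 0), 4R_r)$, so that $\delta_\f(x_0, x) < 4R_r$ and $\delta_{h_s}(0, z) < 4R_r$. This time I would apply \eqref{def:Ks} with intermediate point $z'' = 0$ to get
\begin{equation*}
\delta_{h_s}(z_0, z) \leq K_s\bigl(\delta_{h_s}(z_0, 0) + \delta_{h_s}(0, z)\bigr) < K_s(R + 4R_r) < K_s R(1 + 8K_s),
\end{equation*}
using $R_r < 2K_s R$. Combining with $\delta_\f(x_0, x) < 4R_r < 8K_s R$ gives
\begin{equation*}
\delta_\F(X_0, X) < 8K_s R + K_s R(1 + 8K_s) = K_s R(9 + 8K_s) \leq 2K_s(1 + 8K_s) R = \beta_s R/2,
\end{equation*}
where the last inequality is equivalent to $K_s \geq 7/8$ and hence follows from $K_s \geq 1$.

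There is no real analytical obstacle here; the only point requiring attention is choosing the correct intermediate point in each invocation of \eqref{def:Ks} ($z_0$ when expanding to the section centered on $Z_0$, $0$ when contracting back to $X_0$), so that the final constant exactly matches the prescribed $\beta_s = 4K_s(1 + 8K_s)$ from \eqref{def:Ms:betas}.
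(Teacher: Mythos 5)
Your proof is correct and relies on the same two ingredients as the paper's, namely the additive decomposition $\delta_\F(X_0, X) = \delta_\f(x_0, x) + \delta_{h_s}(z_0, z)$ and the one-dimensional quasi-triangle inequality \eqref{def:Ks} for $\delta_{h_s}$, with the intersection hypothesis entering only through $\delta_{h_s}(z_0,0) \leq R$. The execution, however, differs in a genuine way: you \emph{define} $R_r := (1+2K_s)R/2$ by an explicit formula and then verify directly that $R_r \in (R, 2K_sR)$ and that the inclusions hold, whereas the paper \emph{constructs} $R_r := h_s(z_r)$ from the right endpoint $z_r$ of the one-dimensional section $S_{h_s}(z_0,R)$ (after a normalization $z_r \geq |z_l|$), bounds $R_r \leq 2K_sR$ by the same quasi-triangle estimate, and obtains the first inclusion through the tensor-product estimates \eqref{S:vs:SxS}, i.e. $S_\f(x_0,R)\times S_{h_s}(z_0,R)\subset S_\f(x_0,R_r)\times S_{h_s}(0,R_r)\subset S_\F((x_0,0),2R_r)$. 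Your choice is a bit cleaner on one small point: in the degenerate case $z_0 = 0$ the paper's construction gives $R_r = h_s(z_r) = R$ exactly, so the strict left inequality in $R_r \in (R, 2K_sR)$ is not actually produced by that construction (the inclusion chain still holds, so the issue is cosmetic), whereas your formula places $R_r$ strictly between $R$ and $2K_sR$ whenever $K_s \geq 1$, with no case distinction needed.
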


\begin{proof}
Given  $S_\F(X_0, R)$ with $\overline{S_\F(X_0, R)} \cap Z_0 \neq \emptyset$, the first inclusion in \eqref{S:vs:SxS}  gives
$$
S_\F(X_0, R) \subset S_\f(x_0, R) \times S_{h_s}(z_0, R).
$$
The fact that $\overline{S_\F(X_0, R)} \cap Z_0 \neq \emptyset$ implies that the closed interval $\overline{S_{h_s}(z_0, R)} \subset \re$ contains $0$, that is, $S_{h_s}(z_0, R) =(z_l, z_r)$ with $z_l \leq 0 \leq z_r$. Without loss of generality, let us assume that $z_r \geq |z_l|$. By putting $R_r:=h_s(z_r)$ we have $(-z_r, z_r) = S_{h_s}(0, R_r)$ (see Figure \ref{fig:sectionshs}) and then
$$
S_{h_s}(z_0, R) =(z_l, z_r) \subset (-z_r, z_r) = S_{h_s}(0, R_r).
$$
\begin{center}
\begin{figure}[htp]
\includegraphics[width=4.5in]{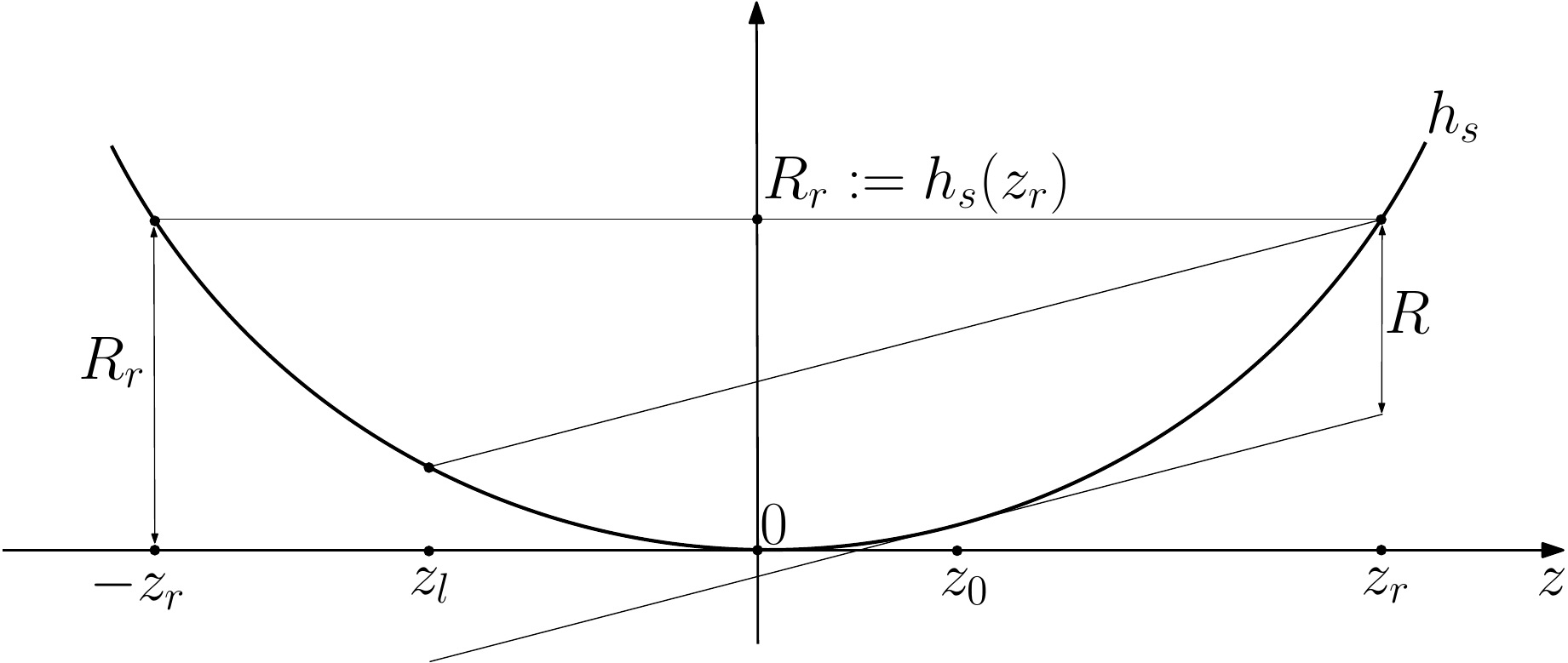}
\caption{On the inclusion $S_{h_s}(z_0, R) =(z_l, z_r) \subset (-z_r, z_r) = S_{h_s}(0, R_r)$.}\label{fig:sectionshs}
\end{figure}
\end{center}
Also, from the quasi-triangle inequality for $h_s$ in \eqref{def:Ks} along with $\delta_{h_s}(z_0, z_r)=R$ and $\delta_{h_s}(z_0, 0) \leq R$, we get
\begin{equation}\label{Rr<R}
R_r = \delta_{h_s}(0, z_r) \leq K_s (\delta_{h_s}(z_0, z_r) +\delta_{h_s}(z_0, 0)  ) \leq 2 K_s R,
\end{equation}
and, using that $R < R_r$,
\begin{align*}
S_\F(X_0, R)  \subset S_\f(x_0, R) \times S_{h_s}(z_0, R) \subset S_\f(x_0, R_r) \times S_{h_s}(0, R_r)  \subset S_\F((x_0, 0), 2R_r).
\end{align*}
Next, we claim that 
\begin{equation}\label{Sh4Rr:Sz0R}
S_{h_s}(0, 4R_r) \subset S_{h_s}(z_0, K_s(1 + 8 K_s) R).
\end{equation}
Indeed, given $z' \in S_{h_s}(0, 4R_r)$ and using the triangle inequality \eqref{def:K3} along with $\delta_{h_s}(z_0, 0) \leq R$ and \eqref{Rr<R},
$$
\delta_{h_s}(z_0, z') \leq K_s ( \delta_{h_s}(z_0, 0) + \delta_{h_s}(0, z') ) < K_s (R + 4 R_r) \leq K_s (1 + 8 K_s) R,
$$
which proves \eqref{Sh4Rr:Sz0R}. Consequently,
\begin{align*}
& S_\F((x_0, 0), 4R_r)   \subset  S_\f(x_0, 4 R_r) \times S_{h_s}(0, 4R_r) \\
& \subset   S_\f(x_0,K_s(1 + 8 K_s) R) \times  S_{h_s}(z_0, K_s(1 + 8 K_s) R) \subset S_\F(X_0, 2 K_s(1 + 8 K_s) R),
\end{align*}
and the lemma is proved. 
\end{proof}

\begin{thm}\label{thm:CD:case3}
Let $\F$ be as in \eqref{def:Phi:s} and $\beta_s$ be as in \eqref{def:Ms:betas} (which is the same constant as in Lemma \ref{lem:betas}). Let $X_0 \in \re^{n+1}$ and $R > 0$ such that $S_\F(X_0, \beta_s R) \subset \subset Q$. Put $S_R:=S_\F(X_0, R)$, $S_{\beta_s R}:= S_\F(X_0, \beta_s R)$, and suppose that  $\overline{S_\F(X_0, 2R)} \cap Z_0 \neq \emptyset$.

Then, there exist geometric constants $\theta_3, \eps_3 \in (0,1)$ such that for every $W \in \calS(Q)$ with $L_\F(W) = - \tr{( (D^2 \F)^{-1} D^2 W})  \geq 0$ and $W \geq 0$ in $S_{\beta_s R} \setminus Z_0$ the inequalities
\begin{equation}\label{W0Rs<theta3}
W_{z,0^+}(\beta_s R) R^s \leq \theta_3
\end{equation}
and
\begin{equation}\label{hyp:W<1:case3}
\inf\limits_{S_R} W \leq 1
\end{equation}
imply
\begin{equation}\label{muF(W<4qs8:case3}
\muF(\{X \in S_{\beta_s R}: W(X) < 4 q_s + 8 \}) \geq \eps_3 \muF(S_{\beta_s R}).  
\end{equation}
\end{thm}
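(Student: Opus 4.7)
The plan is to reduce this configuration --- where $\overline{S_\F(X_0, 2R)}$ meets $Z_0$ but $X_0$ does not lie on $Z_0$ --- to the already settled case handled by Theorem \ref{thm:CD:center:Z0}. The bridge is Lemma \ref{lem:betas}, which supplies a radius $R_r \in (R, 2K_s R)$ together with the chain of inclusions
\[
S_\F(X_0, R) \subset S_\F((x_0, 0), 2R_r) \subset S_\F((x_0, 0), 4R_r) \subset S_\F(X_0, \beta_s R/2) \subset S_\F(X_0, \beta_s R) \subset Q.
\]

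First I will apply Theorem \ref{thm:CD:center:Z0} to the centered section $S_\F((x_0, 0), 2R_r)$, whose ``double'' in the sense of that theorem is $S_\F((x_0, 0), 4R_r)$, with all of these sitting compactly inside $Q$ by the chain above. Four hypotheses must be checked. The sign condition $W \geq 0$ and the differential inequality $L_\F W \geq 0$ on $S_\F((x_0, 0), 4R_r) \setminus Z_0$ are immediate from containment in $S_\F(X_0, \beta_s R)$ combined with the hypotheses on $W$. The infimum hypothesis $\inf_{S_\F((x_0, 0), 2R_r)} W \leq 1$ follows from \eqref{hyp:W<1:case3}, since $S_R \subset S_\F((x_0, 0), 2R_r)$. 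The delicate bookkeeping concerns the normal-derivative smallness $W_{z,0^+}(S_\F((x_0, 0), 4R_r)) \, (2R_r)^s \leq \theta_2$: monotonicity of $W_{z,0^+}(\cdot)$ over nested sets, combined with $R_r < 2K_s R$, bounds the left-hand side by $(4K_s)^s R^s \, W_{z,0^+}(S_\F(X_0, \beta_s R))$, so the choice $\theta_3 := \theta_2 / (4K_s)^s$ (a geometric constant in $(0,1)$) turns \eqref{W0Rs<theta3} into the needed bound.

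With the hypotheses verified, Theorem \ref{thm:CD:center:Z0} will yield
\[
\muF\bigl(\{X \in S_\F((x_0, 0), 4R_r) : W(X) < 4q_s + 8 \}\bigr) \geq \eps_2 \, \muF\bigl(S_\F((x_0, 0), 4R_r)\bigr).
\]
To transfer this to $S_\F(X_0, \beta_s R)$, I will use $S_\F((x_0, 0), 4R_r) \subset S_\F(X_0, \beta_s R)$ to enlarge the superlevel set on the left, and simultaneously exploit $S_\F(X_0, R) \subset S_\F((x_0, 0), 4R_r)$ together with the doubling property \eqref{doubling:r:R} between heights $R$ and $\beta_s R$ to obtain
\[
\muF(S_\F(X_0, \beta_s R)) \leq K_d \beta_s^\nu \, \muF(S_\F(X_0, R)) \leq K_d \beta_s^\nu \, \muF(S_\F((x_0, 0), 4R_r)).
\]
Combining these two estimates delivers \eqref{muF(W<4qs8:case3} with the geometric constant $\eps_3 := \eps_2 / (K_d \beta_s^\nu)$.

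The argument is essentially a reduction, so the only real subtlety is quantitative: calibrating $\theta_3$ so that the normal-derivative threshold survives the enlargement from the $R$-scale to the $2R_r$-scale, and verifying that $\eps_3$ remains geometric after the two applications of doubling required to pass from $S_\F((x_0, 0), 4R_r)$ back to $S_\F(X_0, \beta_s R)$. No new analytic ingredient beyond Lemma \ref{lem:betas} and the doubling property enters at this stage.
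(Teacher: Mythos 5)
Your overall strategy --- reduce to the centered case of Theorem \ref{thm:CD:center:Z0} via Lemma \ref{lem:betas}, then transfer back with doubling --- is exactly the paper's, but there is a genuine gap in how you invoke Lemma \ref{lem:betas}. You apply it at scale $R$, producing $R_r \in (R, 2K_s R)$ and the chain
\[
S_\F(X_0, R) \subset S_\F((x_0,0), 2R_r) \subset S_\F((x_0,0), 4R_r) \subset S_\F(X_0, \beta_s R/2).
\]
However, Lemma \ref{lem:betas} requires $\overline{S_\F(X_0, R)} \cap Z_0 \neq \emptyset$, and the hypothesis of Theorem \ref{thm:CD:case3} only guarantees $\overline{S_\F(X_0, 2R)} \cap Z_0 \neq \emptyset$. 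It is perfectly possible that $\overline{S_\F(X_0, R)}$ misses $Z_0$ while $\overline{S_\F(X_0, 2R)}$ touches it, so your invocation is not justified.

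The paper applies Lemma \ref{lem:betas} to $S_\F(X_0, 2R)$ instead, setting $\widehat{R} := (2R)_r \in (2R, 4K_s R)$ and obtaining
\[
S_\F(X_0, 2R) \subset S_\F((x_0,0), 2\widehat{R}) \subset S_\F((x_0,0), 4\widehat{R}) \subset S_\F(X_0, \beta_s R),
\]
which is legitimate precisely because the hypothesis concerns $2R$. This change in scale propagates into the calibration of $\theta_3$: since $(2\widehat{R})^s \leq (8K_s R)^s$, the correct choice is $\theta_3 := (8K_s)^{-s}\theta_2$, not your $\theta_3 := (4K_s)^{-s}\theta_2$. Once the scale is corrected, the rest of your transfer --- enlarging the sublevel set to $S_\F(X_0, \beta_s R)$ and using doubling \eqref{doubling:r:R} between heights $R$ and $\beta_s R$ to reach $\eps_3 := \eps_2 (K_d \beta_s^\nu)^{-1}$ --- goes through unchanged.
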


\begin{proof}
Let us take $\theta_3:= (8K_s)^{-s} \theta_2$ with $\theta_2 \in (0,1)$ as in Theorem \ref{thm:CD:center:Z0}. By Lemma \ref{lem:betas} used with $2R$ and setting $\widehat{R}:=(2R)_r \in (2 R, 4K_s R)$, we have
\begin{equation}\label{incl:R:Rr:betaR}
S_\F(X_0, 2R) \subset S_\F((x_0, 0), 2 \widehat{R}) \subset  S_\F((x_0, 0), 4 \widehat{R}) \subset  S_\F(X_0, \beta_s R),
\end{equation}
since, by definition, $\beta_s := 4 K_s(1+8K_s)$.  In particular, by hypothesis \eqref{W0Rs<theta3} and the definition of $\theta_3$,
\begin{equation}\label{W4Rr<theta1}
W_{z,0^+}(4\widehat{R}) (2\widehat{R})^s \leq W_{z,0^+}(\beta_s R) (8K_s R)^s \leq (8K_s)^s \theta_3 = \theta_2.
\end{equation}
Now, inequality \eqref{hyp:W<1:case3} and the first inclusion in \eqref{incl:R:Rr:betaR} imply that $\inf\limits_{S_\F((x_0, 0), 2\widehat{R})} W \leq 1$, which combined with \eqref{W4Rr<theta1} allows us to use Theorem \ref{thm:CD:center:Z0} with the sections $ S_\F((x_0, 0), 2\widehat{R})$ and $S_\F((x_0, 0), 4 \widehat{R})$ to obtain
\begin{equation}\label{CD:2Rr:4Rr}
\muF(\{X \in S_\F((x_0, 0), 4\widehat{R}): W(X) < 4 q_s + 8 \}) \geq \eps_2 \muF(S_\F((x_0, 0), 4\widehat{R})).  
\end{equation}
By the doubling property \eqref{doubling:r:R} and the first inclusion in \eqref{incl:R:Rr:betaR} we get
\begin{equation}\label{muFBsR:4Rr}
\muF(S_\F(X_0, \beta_s R)) \leq K_d \beta_s^\nu \muF(S_\F(X_0, R)) \leq K_d \beta_s^\nu \muF(S_\F((x_0, 0), 4\widehat{R})),
\end{equation}
and then \eqref{muF(W<4qs8:case3} follows, with $\eps_3:= \eps_2 (K_d \beta_s^\nu)^{-1}$, from \eqref{CD:2Rr:4Rr}, \eqref{muFBsR:4Rr}, and the last inclusion in \eqref{incl:R:Rr:betaR}.
\end{proof}

\subsection{Proof of Theorem \ref{thm:CD}}

The proof follows, with the geometric constants $\theta_0:=\theta_3$ and $\eps_0:=\eps_3$, from Theorems \ref{thm:CD:Case1}  and \ref{thm:CD:case3}, and the fact that $\beta_s > 2$ and $M_s > 4$. \qed

\section{Local boundedness}\label{sec:LB}

Let us define
\begin{equation*}
\beta_K:= \max\{K, \beta_s\},
\end{equation*}
where $K$ is as in \eqref{def:K} and $\beta_s$ is as in \eqref{def:Ms:betas}. With the critical-density estimate from Theorem \ref{thm:CD} at hand, we can deduce the following local-boundedness results.

\begin{lem}\label{lem:Lemma6}
Let $\F$ be as in \eqref{def:Phi:s}. There exist geometric constants $N_1, N_2, N_3 > 0$ such that for every  $X_0 \in \re^{n+1}$ and $R > 0$ with $S_\F(X_0, \beta_K R) \subset \subset Q$ and every $W \in \calS(Q)$ with  $L_\F W =  - \tr{( (D^2 \F)^{-1} D^2 W}) \leq 0$ and $W \geq 0$ pointwise in $Q^+$ the inequalities
\begin{equation*}
 R^s  W_{z,0^+}(S_\Phi(X_0,\beta_s R)) \leq N_1 \quad \text{ and } \quad \fint_{S_\F(X_0, 2 K R)} W \dmuF\leq N_2
\end{equation*}
imply
\begin{equation*}
\sup\limits_{S_\F(X_0, R/2)} W \leq N_3. 
\end{equation*}
\end{lem}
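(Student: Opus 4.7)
The plan is to reduce Lemma~\ref{lem:Lemma6} to the critical--density estimate Theorem~\ref{thm:CD} by passing from the subsolution $W$ to the supersolution $M-W$ for a well--chosen constant $M\geq\sup W$. Since $\calS(Q)$ is a vector space containing the constants and $L_\F$ is linear and kills constants, the auxiliary function $M-W$ belongs to $\calS(Q)$, satisfies $L_\F(M-W)=-L_\F W\geq 0$ pointwise in $Q^+$, and inherits its normal--derivative bound via $(M-W)_{z,0^+}(\Omega)=W_{z,0^+}(\Omega)$ for every open $\Omega\subset\subset Q$. Thus the structural hypotheses of Theorem~\ref{thm:CD} transfer from $W$ to $M-W$ at no cost.

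Concretely, set $M:=\sup_{\overline{S_\F(X_0,\beta_K R)}}W$, which is finite by continuity of $W$ and the compact containment $\overline{S_\F(X_0,\beta_K R)}\subset Q$. The goal is to bound $M$ by a geometric constant depending only on $N_1$ and $N_2$, since $\sup_{S_\F(X_0,R/2)}W\leq M$ trivially. Pick an intermediate radius $R'\in(0,R]$ comparable to $R$, chosen so that $S_\F(X_0,\beta_s R')\subseteq S_\F(X_0,2KR)$; this is possible by taking $R'$ to be a small geometric fraction of $R$ and using the $\dcc_\F$--doubling inequality \eqref{doubling:r:R} to absorb scale losses. Define $\alpha:=M-\sup_{S_\F(X_0,R')}W\geq 0$ and apply Theorem~\ref{thm:CD} to the rescaled function $\widetilde U:=(M-W)/\alpha$, which is a supersolution in $\calS(Q)$ with $\widetilde U\geq 0$ on $S_\F(X_0,\beta_s R')$ and $\inf_{S_\F(X_0,R')}\widetilde U=1$. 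The normal--derivative condition $(R')^s\widetilde U_{z,0^+}\leq\theta_0$ translates into $(R')^s W_{z,0^+}\leq\alpha\theta_0$, which is guaranteed by choosing $N_1$ small enough.

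Theorem~\ref{thm:CD} then produces a subset of $S_\F(X_0,\beta_s R')$ of $\muF$--measure at least $\eps_0\,\muF(S_\F(X_0,\beta_s R'))$ on which $W>M-\alpha M_s$. Integrating $W$ on this set against the hypothesis $\fint_{S_\F(X_0,2KR)}W\dmuF\leq N_2$ and applying the doubling inequality \eqref{doubling:r:R} yields
\begin{equation*}
(M-\alpha M_s)\,\eps_0\leq C_{\mathrm{geom}}\,N_2,
\end{equation*}
so that $M\leq\alpha M_s+C_{\mathrm{geom}}N_2/\eps_0$. This is directly effective only when $\alpha\ll M$; the complementary regime, where $\alpha$ is a definite fraction of $M$, will be handled by iterating the same argument on a finite geometric chain of intermediate scales $R/2=R_0'<R_1'<\cdots<R_J'=R$, each step arranged so that the corresponding rescaling factor is small. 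The chain terminates after a bounded number of steps because $W$ is finite on the compact set $\overline{S_\F(X_0,\beta_K R)}$ and each step gains a definite geometric factor.

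The principal obstacle is precisely this rescaling balance: the factor $\alpha$ must be simultaneously large enough to preserve the smallness of the rescaled normal derivative (required for Theorem~\ref{thm:CD}) and small enough relative to $M$ for the resulting inequality $M\leq\alpha M_s+C_{\mathrm{geom}}N_2/\eps_0$ to close (recall $M_s>1$), and reconciling the two requires the geometric chain of sections. All constants produced---$N_1$, $N_3$, and the implicit geometric factors---depend only on the $\dcc_\F$--doubling constant, the quasi--triangle constants $K$ and $K_s$, dimension $n$, and $s$, hence are geometric.
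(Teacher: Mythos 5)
Your opening move---passing from the subsolution $W$ to the supersolution $M-W$, noting that $\calS(Q)$ contains constants and that $(M-W)_{z,0^+}(\Omega)=W_{z,0^+}(\Omega)$, and then normalizing so that the infimum of $(M-W)/\alpha$ on an inner section equals $1$---is exactly the right way to feed Theorem~\ref{thm:CD}, and you correctly identify the central tension: $\alpha$ must be bounded below (so that $(R')^s W_{z,0^+}/\alpha\leq\theta_0$) yet small relative to $M$ (so that $M\leq\alpha M_s+CN_2$ is informative). However, the proposed resolution by a ``finite geometric chain of intermediate scales'' does not close, and that is a genuine gap.

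Two concrete problems. First, you set $M:=\sup_{\overline{S_\F(X_0,\beta_K R)}}W$ and aim to bound this quantity by geometric constants. The hypotheses only control $\fint_{S_\F(X_0,2KR)}W\dmuF$ and $R^s W_{z,0^+}(S_\F(X_0,\beta_s R))$; they give no pointwise control of $W$ near $\partial S_\F(X_0,\beta_K R)$, so $M$ can be arbitrarily large regardless of $N_1,N_2$. You therefore cannot hope to bound $M$ itself; the goal is only $\sup_{S_\F(X_0,R/2)}W$. Second, the dichotomy you derive---either $\alpha<M/(2M_s)$, whence $M\leq 2CN_2$, or $\alpha\geq M/(2M_s)$, whence $\sup_{S_\F(X_0,R')}W\leq(1-\tfrac{1}{2M_s})M$---does not iterate to a geometric bound. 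With a chain $R/2=R_0'<\cdots<R_J'=R$ of a \emph{geometric} length $J$ (it must be geometric, otherwise $N_3$ is not), repeatedly landing in the second case gives at best $\sup_{S_\F(X_0,R/2)}W\leq\theta^J M+CN_2/(1-\theta)$ with $\theta=1-\tfrac{1}{2M_s}<1$, and the uncontrolled factor $M$ survives. Your remark that ``the chain terminates after a bounded number of steps because $W$ is finite on the compact set'' conflates finiteness of $W$ (true, but yielding a $W$-dependent stopping time) with a bound by geometric constants.

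The paper's proof, delegated to Lemma~6 of \cite{MaldonadoJDE14}, runs a Krylov--Safonov--type doubling iteration in \emph{point/value} space rather than in scale, and this is what makes the argument close. One assumes $W(X_1)\geq M_0$ for some $X_1\in S_\F(X_0,R/2)$ with $M_0$ large, and applies Theorem~\ref{thm:CD} to the normalized supersolution $(\nu M_0-W)/((\nu-1)M_0)$ on a small section around $X_1$; the conclusion, combined with the Chebyshev estimate $\muF(\{W>M_0/2\}\cap S_\F(X_0,2KR))\leq 2N_2\,\muF(S_\F(X_0,2KR))/M_0$ coming from the $L^1$ hypothesis, forces the alternative that $W$ exceeds $\nu M_0$ (with $\nu>1$ a geometric constant) at a point $X_2$ whose quasi-distance to $X_1$ shrinks in a summable fashion. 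Iterating produces $X_k$ confined to a compact subset of $Q$ with $W(X_k)\geq\nu^k M_0\to\infty$, contradicting continuity. The essential ingredient your plan lacks is precisely this Chebyshev step: it is the mechanism by which the $L^1$ bound controls the \emph{measure of high level sets} at the working scale, and it is what replaces the unobtainable sup bound $M$.
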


\begin{thm}\label{thm:LB:q=1}
Let $\F$ be as in \eqref{def:Phi:s}.  There exist geometric constants $N_4, N_5 > 0$ such that for every  $X_0 \in \re^{n+1}$ and $R > 0$ with $S_\F(X_0, \beta_K R) \subset \subset Q$ and every $W \in \calS(Q)$ with  $L_\F W =  - \tr{( (D^2 \F)^{-1} D^2 W}) \leq 0$ and $W \geq 0$ pointwise in $Q^+$ we have
\begin{equation*}
\sup\limits_{S_\F(X_0, R/2)} W \leq N_4 \fint_{S_\F(X_0, 2 K R)} W \dmuF + N_5  R^s W_{z,0^+}(S_\Phi(X_0,\beta_s R)).
\end{equation*}
\end{thm}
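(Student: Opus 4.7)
The plan is to deduce Theorem~\ref{thm:LB:q=1} from Lemma~\ref{lem:Lemma6} via a straightforward homogeneity/normalization argument, exploiting the fact that $\calS(Q)$ is a vector space and that the hypotheses $L_\F W\leq 0$ and $W\geq 0$ in $Q^+$ are preserved under multiplication by positive scalars. Set
\begin{equation*}
A := \fint_{S_\F(X_0, 2KR)} W \dmuF, \qquad B := R^s\, W_{z,0^+}(S_\F(X_0, \beta_s R)),
\end{equation*}
so that the desired conclusion becomes $\sup_{S_\F(X_0, R/2)} W \leq N_4 A + N_5 B$.

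First I would dispose of the degenerate case $A=0$: since $W\geq 0$ in $Q^+$ and, by the $z$-symmetry built into the definition of $\calS(Q)$, also $W\geq 0$ in $Q^-$, continuity of $W$ on $Q$ forces $W\equiv 0$ on the open set $S_\F(X_0, 2KR)$ whenever its average against the positive measure $\muF$ vanishes. In that case $W\equiv 0$ on $S_\F(X_0, R/2)\subset S_\F(X_0, 2KR)$ (using $K\geq 1$), and the estimate is trivial.

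Otherwise $A>0$, and I would set $\lambda := A/N_2 + B/N_1 > 0$ and consider $\widetilde W := W/\lambda$. Since $\calS(Q)$ is a vector space, $\widetilde W \in \calS(Q)$; linearity gives $L_\F \widetilde W = \lambda^{-1} L_\F W \leq 0$ pointwise in $Q^+$, and $\widetilde W\geq 0$ in $Q^+$. Moreover, by construction,
\begin{equation*}
\fint_{S_\F(X_0, 2KR)} \widetilde W \dmuF \;=\; \frac{A}{\lambda} \;\leq\; N_2,
\qquad
R^s \widetilde W_{z,0^+}(S_\F(X_0, \beta_s R)) \;=\; \frac{B}{\lambda} \;\leq\; N_1.
\end{equation*}
Thus the hypotheses of Lemma~\ref{lem:Lemma6} are met for $\widetilde W$ on the section $S_\F(X_0, R)$ (with the enclosing section $S_\F(X_0, \beta_K R)\subset\subset Q$ provided directly by assumption), yielding $\sup_{S_\F(X_0, R/2)} \widetilde W \leq N_3$, i.e.
\begin{equation*}
\sup_{S_\F(X_0, R/2)} W \;\leq\; N_3\,\lambda \;=\; \frac{N_3}{N_2}\, A + \frac{N_3}{N_1}\, B.
\end{equation*}
Setting $N_4 := N_3/N_2$ and $N_5 := N_3/N_1$ (both geometric constants) gives the theorem.

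There is no serious obstacle here: all the hard work — the critical-density estimate treating the three positional cases of sections relative to $Z_0$, and the De~Giorgi-type iteration that upgrades it to the $L^\infty$ bound of Lemma~\ref{lem:Lemma6} — has already been carried out upstream. The only points to verify carefully are the benign ones already noted: that $\calS(Q)$ is closed under scaling, that the normal-derivative functional $F_{z,0^+}$ defined in \eqref{def:F0+} is positively homogeneous in $F$, and that the $A=0$ case does not obstruct the normalization.
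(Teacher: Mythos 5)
Your normalization argument is correct and is essentially the route the paper takes: the paper's proof of Theorem~\ref{thm:LB:q=1} simply invokes Lemma~\ref{lem:Lemma6} (``follows from Lemma~\ref{lem:Lemma6} as in the proof of Theorem~3 on p.~2004 of~\cite{MaldonadoJDE14}''), and that referenced argument is exactly the scale-normalization you describe, exploiting linearity of $L_\F$, of the $\muF$-average, and of the normal-derivative functional, together with the vector-space structure of $\calS(Q)$. The small preliminary disposal of the degenerate case $A=0$ and the choice $\lambda = A/N_2 + B/N_1$ are the standard bookkeeping, and you have verified the needed homogeneity properties carefully.
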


\begin{thm}\label{thm:LB}
Let $\F$ be as in \eqref{def:Phi:s}. Then there are geometric constants $\kappa \in (0,1)$ and $K_4 \geq K$  such that for every $q >0$ there exist constants $C_{1,q}, C_{2,q}> 0$, depending only on geometric constants and $q$, such that for every section $S_\F(X_0,K_4 R) \subset \subset Q$ and every $W \in \calS(Q)$ with $L_\F W =  - \tr{( (D^2 \F)^{-1} D^2 W}) \leq 0$ and $W \geq 0$ pointwise in $Q^+$, we have
\begin{align*}\label{meanvalq}
\sup\limits_{S_\F(X_0,\kappa R)} W & \leq  C_{1,q} \left(\fint\limits_{S_\F(X_0,  R)} W^q  \dmuF \right)^{1/q} + C_{2,q} R^s W_{z,0^+}(S_\F(X_0,K_4 R)).
\end{align*}
\end{thm}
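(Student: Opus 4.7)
The plan is to deduce Theorem \ref{thm:LB} from the $L^1$ mean-value inequality of Theorem \ref{thm:LB:q=1} by a standard Moser-type self-improvement. The case $q \geq 1$ is immediate: Jensen's inequality gives $\fint W\dmuF \leq (\fint W^q\dmuF)^{1/q}$, and applying Theorem \ref{thm:LB:q=1} at scale $R' = R/(2K)$ delivers the conclusion with $\kappa = 1/(4K)$ and any $K_4 \geq \max(K,\beta_s)$. The only real work is the case $0 < q < 1$, which I expect to be the main obstacle and which I outline below.

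Fix $K_4 := 2K\beta_K$ and $\kappa \in (0, 1/2)$ to be chosen. Set $M(\rho) := \sup_{S_\F(X_0, \rho R)} W$, which is finite and nondecreasing in $\rho$ because $W$ is continuous on compact subsets of $Q$. For $\rho \in [\kappa, 1]$ and $\delta>0$ with $\rho+\delta \leq 1$, pick a geometric $c = c(K)>0$ small enough that the quasi-triangle inequality \eqref{def:K} forces
\begin{equation*}
S_\F(X, 2Kc\delta R)\subset S_\F(X_0, (\rho+\delta)R), \qquad S_\F(X, \beta_s c\delta R)\subset S_\F(X_0, K_4 R),
\end{equation*}
for every $X \in S_\F(X_0,\rho R)$. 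Applying Theorem \ref{thm:LB:q=1} to $W$ on the section $S_\F(X, c\delta R)$ controls $W(X)$ by a constant times $\fint_{S_\F(X, 2Kc\delta R)} W\dmuF$ plus $(c\delta R)^s T$, where $T := W_{z,0^+}(S_\F(X_0, K_4 R))$.

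To convert this into a functional inequality for $M$, I would use the pointwise bound $W \leq M(\rho+\delta)^{1-q} W^q$ (valid since $0<q<1$ and $W \geq 0$) in the averaged term, together with the doubling estimate \eqref{doubling:r:R} to swap $\fint_{S_\F(X, 2Kc\delta R)} W^q \dmuF$ for $C\delta^{-\nu} \fint_{S_\F(X_0, R)} W^q \dmuF$. Taking the supremum over $X$ and applying Young's inequality with conjugate exponents $1/(1-q)$ and $1/q$ yields, for every $\eta > 0$,
\begin{equation*}
M(\rho) \leq \eta\, M(\rho + \delta) + C_\eta\, \delta^{-\nu/q} A^{1/q} + C(\delta R)^s T, \qquad A := \fint_{S_\F(X_0, R)} W^q \dmuF.
\end{equation*}

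The iteration is then standard: with $\rho_i := \kappa + (1-\kappa)(1 - 2^{-i})$ and $\delta_i := \rho_{i+1} - \rho_i = (1-\kappa) 2^{-i-1}$, one has $\rho_0 = \kappa$, $\rho_i \nearrow 1$, and $\delta_i^{-\nu/q} \lesssim 2^{i\nu/q}$. Choose $\eta \in (0,1)$ depending on $q$ and $\nu$ so that $\eta\, 2^{\nu/q} < 1/2$; then unrolling
\begin{equation*}
M(\rho_0) \leq \eta^N M(\rho_N) + \sum_{j=0}^{N-1}\eta^j\Bigl(C_\eta\, \delta_j^{-\nu/q} A^{1/q} + C(\delta_j R)^s T\Bigr)
\end{equation*}
and sending $N\to\infty$ yields $M(\kappa) \leq C_{1,q} A^{1/q} + C_{2,q} R^s T$, since the geometric series converges and $\eta^N M(\rho_N) \to 0$ by the uniform bound $\sup_{\overline{S_\F(X_0,R)}} W < \infty$. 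The main obstacles are the correct bookkeeping of the quasi-triangle constant $K$ (which is $>1$, unlike in the Euclidean setting) when localizing $S_\F(X, c\delta R)$, and verifying that the tail term $T$ is absorbed uniformly along the iteration; the latter is automatic because $\sum_j \eta^j \delta_j^s < \infty$ for any $\eta < 1$.
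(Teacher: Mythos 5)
Your reduction of the case $q\geq 1$ to Theorem \ref{thm:LB:q=1} via Jensen's inequality is fine. The problem is in the case $0<q<1$: the inclusion you assert,
\[
S_\F(X, 2Kc\delta R)\subset S_\F(X_0, (\rho+\delta)R)\qquad\text{for every } X \in S_\F(X_0,\rho R),
\]
cannot be forced by choosing $c$ small, no matter how small. From the quasi-triangle inequality \eqref{def:K}, the best one can say about a point $Y\in S_\F(X,2Kc\delta R)$ with $X\in S_\F(X_0,\rho R)$ is
\[
\delta_\F(X_0,Y)\leq K\big(\delta_\F(X_0,X)+\delta_\F(X,Y)\big)< K\big(\rho+2Kc\delta\big)R.
\]
For this to be below $(\rho+\delta)R$ one needs $(K-1)\rho \leq \delta\,(1-2K^2c)$, hence $\delta \gtrsim (K-1)\rho$. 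Along your sequence $\rho_i\nearrow 1$, $\delta_i=(1-\kappa)2^{-i-1}\to 0$, so the requirement $\delta_i\gtrsim (K-1)\rho_i\geq (K-1)\kappa$ fails from some $i$ on. This is not a bookkeeping issue: because $K>1$ is a multiplicative quasi-triangle constant, you do not have the Euclidean additivity $B(x,r)\subset B(x_0,\rho+r)$ that makes the standard Moser ladder $\rho_i\nearrow 1$ close. Every subsequent step of your proposal --- the bound $W\leq M(\rho+\delta)^{1-q}W^q$ on $S_\F(X,2Kc\delta R)$, the swap $\fint_{S_\F(X,2Kc\delta R)} W^q\,d\mu_\F\lesssim \delta^{-\nu}\fint_{S_\F(X_0,R)}W^q\,d\mu_\F$, and the vanishing of $\eta^N M(\rho_N)$ --- relies on this inclusion, so the gap propagates through the whole iteration. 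You also need this inclusion to justify that $\muF(S_\F(X,2Kc\delta R))$ is comparable to $\muF(S_\F(X_0,R))$ up to the stated $\delta^{-\nu}$ factor; without it, the centered average on the small section is not controlled by the fixed one.

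The paper's own proof does not attempt this iteration from scratch; it appeals to the argument of \cite[Theorem~7, pp.~2005--8]{MaldonadoJDE14}, which is specifically crafted for the Monge--Amp\`ere quasi-metric and manages the multiplicativity of $K$ (and likewise the geometric constants from Theorem~\ref{thm:LB:q=1}) without invoking an additive nesting of concentric sections. To repair your sketch you would need either to replace the nesting $S_\F(X,\cdot)\subset S_\F(X_0,(\rho+\delta)R)$ with one that is actually available from engulfing/doubling in the $\dcc_\F$ setting, or to switch to the covering/stopping-time scheme from \cite{MaldonadoJDE14}; as written, the step ``pick $c$ small enough that the quasi-triangle inequality forces the inclusion'' is false once $K>1$.
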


\begin{proof}[About the proofs]
The proof of Lemma \ref{lem:Lemma6} follows as the proof of Lemma 6 in  \cite{MaldonadoJDE14}. Essentially, the only modification is to have the expression 
$$
\frac{t}{ \muf(S_\f(z,t))^{1/n}}\|f\|_{L^n(S_\f(z, 2Kt), \dmuf}
$$ 
(in the proof of \cite[Lemma 6]{MaldonadoJDE14}) replaced with $W_{z,0^+}(S_{\beta_s R}) R^s$ (and, of course, replacing the function $\f$ with $\F$). A key point in the proof is that the sub-solution be locally bounded, which is the case in Lemma \ref{lem:Lemma6} since $W \in \calS(Q)$.

Similarly, Theorem \ref{thm:LB:q=1} follows from  Lemma \ref{lem:Lemma6} as in the proof of Theorem 3 on  \cite[p.2004]{MaldonadoJDE14} and Theorem \ref{thm:LB} follows from Theorem \ref{thm:LB:q=1} as in the proof of Theorem 7 on \cite[pp.2005-8]{MaldonadoJDE14}.
\end{proof}

\section{An arbitrarily sensitive critical-density estimate}\label{sec:every:CD}

By relying on the divergence-form side of the linearized Monge--Amp\`ere operator, in this section we extend Theorem \ref{thm:CD} by proving that every $\eps \in (0,1)$ can work as a critical-density parameter (see Theorem \ref{thm:every:CD} and Corollary \ref{cor:every:CD} below). 

All a.e. statements are referred to Lebesgue measure, which is equivalent to a.e. with respect to $\muF$ due the hypothesis $\f \in C^3(\rn)$ with $D^2 \f > 0$ in $\rn$ and the fact that $\mu_{h_s}(z) = |z|^{1/s-2}$. 

\begin{lem}\label{lem:energy:log:H} 
Let $\F$ be as in \eqref{def:Phi:s}. Fix a section $S_\F(X_0, 2R)$ and suppose that $H \in C(S_\F(X_0, 2R))$ satisfies the following conditions:
\begin{enumerate}[(i)] 
\item $D^2H$ exists a.e. in $S_\F(X_0, 2R)$,
\item $L_\F(H) =  - \tr{( (D^2 \F)^{-1} D^2 H}) \geq 0$ a.e. in $S_\F(X_0, 2R)$,
\item there exists $\tau > 0$ such that $H(X) \geq \tau$ for every $X \in S_\F(X_0, 2R)$,
\item $\nabla^\F H \in L^2(S_\F(X_0, 2R), d\muF)$, that is,
\begin{equation}\label{H:W2F}
\int_{S_\F(X_0, 2R)}  \langle A_\F \nabla H, \nabla H\rangle \dX  < \infty.
\end{equation}
\end{enumerate}
Then,
\begin{equation}\label{energy:log:H}
\fint\limits_{S_\F(X_0,  R)}  |\nFs \log H |^2 \dmuF \leq \frac{32 (n+2)K_d^2}{R}, 
\end{equation}
where $K_d > 1$ is the doubling constant from \eqref{def:Kd}. 
\end{lem}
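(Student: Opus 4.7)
The argument is a standard Caccioppoli-type estimate for $\log H$, implemented in the Monge--Amp\`ere geometry of $\F$ and using the divergence side $\muF L_\F = -\dive(A_\F \nabla \cdot)$ provided by \eqref{L=calL} and the fact that the columns of $A_\F$ are divergence-free a.e.

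The first step is to recognize that, because $H \geq \tau > 0$, the function $\log H$ is well-defined and bounded, with $\nabla^\F \log H = \nabla^\F H / H$ a.e., and the hypothesis \eqref{H:W2F} makes $\nabla^\F \log H$ belong to a weighted $L^2$ space. Multiplying the a.e.\ inequality $L_\F H \geq 0$ by $\muF$ and rewriting it as $-\dive(A_\F \nabla H) \geq 0$ a.e., I would test against the nonnegative function $\psi := \eta^2 / H$ where $\eta$ is the Lipschitz cutoff
\begin{equation*}
\eta(X) := \min\!\big(1,\, (2 - \delta_\F(X_0, X)/R)_+\big),
\end{equation*}
which satisfies $\eta \equiv 1$ on $S_\F(X_0, R)$, is compactly supported in $S_\F(X_0, 2R)$, and obeys $|\nabla^\F \eta| \leq |\nabla^\F \delta_\F(X_0, \cdot)|/R$ on the annulus $\{R \leq \delta_\F(X_0, \cdot) \leq 2R\}$ and $|\nabla^\F \eta| = 0$ otherwise.

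The second step is integration by parts. Since $\eta$ vanishes on $\partial S_\F(X_0, 2R)$, and $A_\F \in L^1_{\mathrm{loc}}$ (thanks to $|z|^{1/s-2} \in L^1_{\mathrm{loc}}(\re)$ for $0<s<1$), there is no boundary or $Z_0$-singular contribution, and the computation
\begin{equation*}
0 \leq \int \langle A_\F \nabla H, \nabla(\eta^2/H) \rangle \dX = -\int \frac{\eta^2}{H^2} \langle A_\F \nabla H, \nabla H \rangle \dX + 2\int \frac{\eta}{H} \langle A_\F \nabla H, \nabla \eta \rangle \dX
\end{equation*}
together with the identity $\langle A_\F u, u \rangle = \muF |\nabla^\F u|^2$, the Cauchy--Schwarz inequality for the positive form $A_\F$, and Young's inequality for absorption yields the Caccioppoli bound
\begin{equation*}
\int \eta^2 |\nabla^\F \log H|^2 \dmuF \leq 4 \int |\nabla^\F \eta|^2 \dmuF.
\end{equation*}

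The third step is to estimate the right-hand side using \eqref{L2:Energy:delta:Phi} and the doubling property \eqref{def:Kd}:
\begin{equation*}
\int |\nabla^\F \eta|^2 \dmuF \leq \frac{1}{R^2} \int_{S_\F(X_0, 2R)} |\nabla^\F \delta_\F(X_0, \cdot)|^2 \dmuF \leq \frac{(n+2) K_d (2R)}{R^2} \muF(S_\F(X_0, 2R)) \leq \frac{2(n+2) K_d^2}{R} \muF(S_\F(X_0, R)),
\end{equation*}
so dividing by $\muF(S_\F(X_0, R))$ and absorbing factors gives \eqref{energy:log:H} (with constants of the stated order; the exact numerical factor $32$ is easily accommodated by a slightly less efficient choice of cutoff or absorption).

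The main obstacle will be rigorously justifying the integration by parts, since $H$ is only continuous with $D^2 H$ existing a.e., and the matrix $A_\F$ is degenerate or singular on the hyperplane $Z_0$. I would handle this by first working on $\{|z| > \epsilon\} \cap S_\F(X_0, 2R)$, where $A_\F$ is smooth and the divergence theorem applies classically to the block-diagonal $A_\F$, and then passing to the limit $\epsilon \to 0^+$ using $A_\F \in L^1_{\mathrm{loc}}$ together with the energy bound \eqref{H:W2F}; equivalently, one may regularize $H$ by mollification in $x$ (keeping $z$ fixed) and use the a.e.-divergence-free property of $A_\F$'s columns to commute derivatives. Once the identity holds, the rest of the argument is geometric and relies only on \eqref{L2:Energy:delta:Phi} and \eqref{def:Kd}.
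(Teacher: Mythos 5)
Your proposal is correct and follows essentially the same route as the paper's proof: pass to the divergence form $-\dive(A_\F\nabla H)\geq 0$, test against $\eta^2/H$, absorb via Cauchy--Schwarz and Young to get the Caccioppoli bound, then control $\int|\nabla^\F\eta|^2\dmuF$ by the $L^2(d\mu_\F)$-energy estimate \eqref{L2:Energy:delta:Phi} for $\delta_\F$ together with doubling. The only cosmetic difference is that the paper uses a $C^1$ cutoff $\zeta=\gamma(\delta_\F/R)$ with $\|\gamma'\|_\infty\leq 2$ while you use a piecewise-linear one (whence your slightly sharper constant, which is harmless since it is below the stated $32(n+2)K_d^2/R$), and you spell out the truncation-away-from-$Z_0$ justification of the integration by parts more explicitly than the paper does.
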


\begin{proof}
Multiply the inequality $L_\F(H) \geq 0$ by $\muF$ to obtain, a.e. in $S_\F(X_0, 2R)$,
\begin{equation}\label{tr:to:div}
0 \geq \muF \tr((D^2 \F)^{-1} D^2 H) =  \tr(A_\F D^2 H) = \dive (A_\F \nabla H).
\end{equation}
For $\gamma \in C^1(\re)$, supported in $[-2, 2]$,  with $\gamma \equiv 1$ on $[0,1]$ and $\|\gamma'\|_{L^\infty(\re)} \leq 2$  define
\begin{equation*}\label{def:zeta}
\zeta(X):= \gamma \left(\frac{\delta_\F(X_0, X)}{R} \right) \quad \forall X \in \rN.
\end{equation*}
Now multiply \eqref{tr:to:div} by $\zeta^2/H$ and integrate by parts on $S_\F(X_0, 2R)$ to obtain
\begin{align*}
\int_{S_\F(X_0, 2R)}  \frac{\zeta^2 }{H^2} \langle A_\F \nabla H, \nabla H\rangle \dX \leq  \int_{S_\F(X_0, 2R)} \frac{2 \zeta}{H} \langle A_\F \nabla H, \nabla \zeta \rangle \dX.
\end{align*}
From Young's inequality, 
\begin{align*}
&\int_{S_\F(X_0, 2R)} \frac{2 \zeta}{H} \langle A_\F \nabla H, \nabla \zeta \rangle \dX\\
& \leq \frac{1}{2} \int_{S_\F(X_0, 2R)}  \frac{\zeta^2 }{H^2} \langle A_\F \nabla H, \nabla H\rangle \dX  + 2 \int_{S_\F(X_0, 2R)}  \langle A_\F \nabla \zeta, \nabla \zeta \rangle \dX.
\end{align*}
Hence,
\begin{equation*}
\int_{S_\F(X_0, 2R)}  \frac{\zeta^2 }{H^2} \langle A_\F \nabla H, \nabla H\rangle \dX \leq   4 \int_{S_\F(X_0, 2R)}  \langle A_\F \nabla \zeta, \nabla \zeta \rangle \dX,
\end{equation*}
where we have used that $\zeta^2 \leq 1$, $0 < \tau \leq H$, and \eqref{H:W2F} to guarantee that
$$
\int_{S_\F(X_0, 2R)}  \frac{\zeta^2 }{H^2} \langle A_\F \nabla H, \nabla H\rangle \dX < \infty.
$$
On the other hand, since
$$
\nabla \zeta(X) = \frac{1}{R} \gamma' \left(\frac{\delta_\F(X_0, X)}{R} \right) (\nabla \F(X) - \nabla \F(X_0))
$$
we get
\begin{align*}
& \int_{S_\F(X_0, 2R)}  \langle A_\F \nabla \zeta, \nabla \zeta \rangle \dX\\
& \leq \frac{4}{R^2} \int_{S_\F(X_0, 2R)}  \langle A_\F(X) (\nabla \F(X) - \nabla \F(X_0)), (\nabla \F(X) - \nabla \F(X_0)) \dX\\
& \leq \frac{8(n+2)K_d}{R} \muF(S_\F(X_0, 2R)),
\end{align*}
where for the last inequality we used the energy estimate \eqref{L2:Energy:delta:Phi}. Therefore,
\begin{align*}
& \int\limits_{S_\F(X_0,  R)}  |\nFs \log H |^2 \dmuF  = \int_{S_\F(X_0, R)}  \frac{1 }{H^2} \langle A_\F \nabla H, \nabla H\rangle \dX\\
& \leq \int_{S_\F(X_0, 2R)}  \frac{\zeta^2 }{H^2} \langle A_\F \nabla H, \nabla H\rangle \dX \leq \frac{32 (n+2)K_d}{R} \muF(S_\F(X_0, 2R)),
\end{align*}
and \eqref{energy:log:H} follows from the doubling property \eqref{def:Kd}.
\end{proof}

\begin{thm}\label{thm:every:CD}
Let $\F$ be as in \eqref{def:Phi:s}.  Fix $W \in \calS(Q)$ with $W \geq  0$ and $L_\F W =  - \tr{( (D^2 \F)^{-1} D^2 W}) \geq 0$  in $Q^+$. Fix $X_0 \in \re^{n+1}$ and $R>0$ with $S_\F(X_0,K_4 R) \subset \subset Q$ and put $S:= S_\F(X_0,R)$. 

Then, given $\eps, \tau \in (0,1)$, the inequalities
\begin{equation}\label{Wz0:K4R:tau}
W_{z,0^+}(S_{K_4 R}) R^s \leq \tau
\end{equation}
and
\begin{equation}\label{blabla}
\muF( \{X \in S : W(X) \geq 1- \tau \}) \geq \eps \muF(S)
\end{equation}
imply that
\begin{equation}\label{infW>C}
\inf\limits_{S_\F(X_0,\kappa \,R)} W + \tau \geq  e^{-C_{0}(\eps)},
\end{equation}
where $\kappa \in (0,1)$ and $K_4 >1$ are the geometric constants from Theorem \ref{thm:LB} and
\begin{equation}\label{def:C0}
C_0(\eps):=  C_{1,1}  K_P K_d \left(1+ \frac{1}{\eps}\right)  \sqrt{ \frac{32 (n+2)}{K_2}} + C_{2,1}, 
\end{equation}
where $K_P, K_2 > 0$ are the geometric constants from the weak Poincar\'e inequality  \eqref{Poincare:Phi} and  $C_{1,1}, C_{2,1} >0$ are the geometric constants from Theorem \ref{thm:LB} corresponding to $q=1$. 
\end{thm}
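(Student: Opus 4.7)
The plan is to run a Moser-type logarithm argument that upgrades the measure-level lower bound \eqref{blabla} to a pointwise one on the smaller section. First I would set $H := W + \tau$, so that $H \geq \tau > 0$ on $Q$ while $L_\F H = L_\F W \geq 0$ a.e.\ in $Q\setminus Z_0$ (using the even symmetry of $W$ in $z$ to extend the inequality from $Q^+$). A direct computation with $D^2(\log H) = D^2 H/H - H^{-2}\nabla H \otimes \nabla H$ then yields
\[
L_\F(\log H) = \frac{L_\F H}{H} + \frac{|\nabla^\F H|^2}{H^2} \geq 0,
\]
so $-\log H$ is a sub-solution, and I would work with its nonnegative truncation $G := (-\log H)^+$, which is continuous and $C^2$ off $Z_0 \cup \{H=1\}$, and which vanishes on the set $E := \{W \geq 1-\tau\}\subset S$ with $\muF(E) \geq \eps\,\muF(S)$.

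Next I would combine Lemma \ref{lem:energy:log:H}, applied on $S_\F(X_0, 2K_2 R)$ (enlarging the geometric constant $K_4$ if necessary so that $K_4 \geq 2K_2$) to get
\[
\fint_{S_\F(X_0, K_2 R)} |\nabla^\F \log H|^2 \dmuF \leq \frac{32(n+2)K_d^2}{K_2 R},
\]
with Corollary \ref{cor:Fabes} (whose proof carries the $R^{1/2}$-factor inherited from the weak Poincar\'e inequality \eqref{Poincare:Phi}) together with the obvious bound $|\nabla^\F G| \leq |\nabla^\F \log H|$, to arrive at the $R$-independent estimate
\[
\fint_S G \dmuF \leq \left(1 + \frac{1}{\eps}\right) K_P K_d \sqrt{\frac{32(n+2)}{K_2}}.
\]
For the normal-derivative data on $Z_0$, I would observe that $G_z(x,0^+)=0$ on $\{W(\cdot,0) > 1-\tau\}$, while elsewhere $G = -\log H$ in a vertical neighborhood gives $G_z(x,0^+) = -W_z(x,0^+)/H(x,0)$; with $H \geq \tau$ and hypothesis \eqref{Wz0:K4R:tau}, this yields $R^s G_{z,0^+}(S_\F(X_0, K_4 R)) \leq R^s W_{z,0^+}/\tau \leq 1$. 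Applying Theorem \ref{thm:LB} with $q=1$ to $G$ then gives
\[
\sup_{S_\F(X_0, \kappa R)} G \leq C_{1,1}\fint_S G \dmuF + C_{2,1}\, R^s G_{z,0^+}(S_\F(X_0, K_4 R)) \leq C_0(\eps),
\]
and since $G \geq -\log H$ pointwise, we conclude $W+\tau \geq e^{-C_0(\eps)}$ on $S_\F(X_0, \kappa R)$, which is \eqref{infW>C}.

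The main technical obstacle I anticipate is justifying that $G$ is admissible in Theorem \ref{thm:LB}, since $\calS(Q)$ prescribes $C^2$ regularity off $Z_0$ while $G$ is only Lipschitz across the level set $\{H=1\}$. I would circumvent this either by running the Moser-iteration proof of Theorem \ref{thm:LB} directly with $G$ in a weak (energy) formulation --- $L_\F G \leq 0$ holds as a weak inequality because $G$ is the positive part of the classical sub-solution $-\log H$ --- or by the standard approximation $G_\delta := (-\log H - \delta)^+$ for $\delta > 0$, whose support sits strictly inside $\{H < e^{-\delta}\}$ where $G_\delta = -\log H - \delta$ is classically $C^2$ off $Z_0$ with $L_\F G_\delta \leq 0$; all of the estimates above hold for $G_\delta$ uniformly in $\delta$, and passing to $\delta \to 0^+$ recovers the estimate for $G$.
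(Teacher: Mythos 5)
Your proposal follows essentially the same route as the paper: pass to $H := W+\tau$, observe that $-\log H$ is a sub-solution via the identity $L_\F(\log H) = L_\F H/H + |\nabla^\F H|^2/H^2 \geq 0$, control $\fint G \dmuF$ by combining Lemma \ref{lem:energy:log:H} with Corollary \ref{cor:Fabes} on the vanishing set $\{W \geq 1-\tau\}$, bound the normal-derivative term by $W_{z,0^+}/\tau \leq R^{-s}$, and conclude via Theorem \ref{thm:LB} with $q=1$. The choice of $\sqrt{32(n+2)K_d^2/(K_2 R)}$ cancelling the $R^{1/2}$ from Poincar\'e, and the observation that $K_4$ must dominate $2K_2$, also match the paper.

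The one place your proposal does not close is the regularization step, and you did flag it correctly as the main obstacle. Your candidate fix $G_\delta := (-\log H - \delta)^+$ does not repair the admissibility problem: $G_\delta$ has exactly the same kind of gradient kink as $G$, now sitting on the level set $\{H = e^{-\delta}\}$ rather than $\{H = 1\}$, so $G_\delta \notin C^2(Q \setminus Z_0)$ and $G_\delta \notin \calS(Q)$ for generic $\delta$. Your other suggestion (rerunning Theorem \ref{thm:LB} in a weak energy formulation) would work in principle but amounts to reproving that theorem. The paper instead smooths the \emph{outer} function: it replaces the truncated logarithm $\ell_\tau$ (a convex, globally Lipschitz function with $\|\ell_\tau'\|_\infty \leq 1/\tau$ that agrees with $(-\log t)^+$ on $[\tau, \infty)$) by a $C^\infty$ convex approximation $\ell_{\tau,\ep}$ satisfying $\ell_{\tau,\ep}'' \geq 0$, $\ell_{\tau,\ep}' \leq 0$, $\|\ell_{\tau,\ep}'\|_\infty \leq 1/\tau$. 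Then $G^{(\ep)} := \ell_{\tau,\ep}(W+\tau)$ is genuinely $C^2$ off $Z_0$ as a composition, belongs to $\calS(Q)$, satisfies $L_\F G^{(\ep)} \leq 0$, and has $|G^{(\ep)}_z(x,0^+)| \leq W_{z,0^+}/\tau$ uniformly in $\ep$; Theorem \ref{thm:LB} applies to each $G^{(\ep)}$ with bounds uniform in $\ep$, and one passes to the limit. Since you are applying $\ell_\tau$ only to $W+\tau \geq \tau$, your $G$ and the paper's $G$ coincide, so this mollification on the outer function is the clean way to realize the approximation you already know is needed.
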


\begin{proof}
For $\tau \in (0,1)$ define the function $\ell_\tau : [0, \infty) \to [0, \infty)$ as 
\begin{equation*}\label{def:ltau}
\ell_\tau(t):= \left\{
      \begin{array}{lcl}
       -\frac{1}{\tau}(t - \tau) - \log (\tau) &  \text{if } & 0 < t < \tau, \\
           -\log (t) &  \text{if } &  \tau \leq t < 1,\\
           0 &\text{if } &   1 \leq t,
      \end{array}
    \right.
\end{equation*}
which is a convex, Lipschitz function with $\ell_\tau' \leq 0$ and $\|\ell_\tau'\|_{L^\infty[0,\infty)} \leq 1/\tau$, and put
\begin{equation}\label{def:G}
G(X) := \ell_{\tau}(W(X) + \tau) \quad \forall X \in S_\F(X_0, K_4 R).
\end{equation}
Let $\{\ell_{\tau,\ep}\}_{\ep \in (0,1)}$ be a smooth approximation of $\ell_\tau$ such that
\begin{equation}\label{prop:ell:tau:ep}
\ell_{\tau,\ep}'' \geq 0, \quad \ell_{\tau,\ep}' \leq 0, \quad \|\ell_{\tau, \ep}'\|_{L^\infty[0,\infty)}\leq 1/\tau \quad \forall \ep \in (0,1)
\end{equation}
and introduce
\begin{equation*}
G^{(\ep)}(X) := \ell_{\tau,\ep}(W(X) + \tau) \quad \forall X \in Q.
\end{equation*}
Let us see that $G^{(\ep)} \in \calS(Q)$ for every $\ep \in (0,1)$ by checking the conditions \eqref{z:symmetry}-\eqref{normal:derivative} from the definition of $\calS(Q)$ in Section \ref{sec:S(Q)}. Conditions \eqref{z:symmetry} and \eqref{C:C2} are immediate. Condition \eqref{finite:L2:E} follows from
$$
\nabla^\F G^{(\ep)} = \ell_{\tau,\ep}'(W(X) + \tau) \nabla^\F W(X) \quad \forall X \in Q
$$
and the fact that $\|\ell_{\tau, \ep}'\|_{L^\infty[0,\infty)} \leq 1/\tau$, uniformly in $\ep \in (0,1)$. Similarly, condition \eqref{normal:derivative}, follows from
$$
G^{(\ep)} _z =  \ell_{\tau,\ep}'(W(X) + \tau)W_z(X) \quad \forall X \in Q.
$$ 
Consequently, $G^{(\ep)} \in \calS(Q)$ for every $\ep \in (0,1)$. 

On the other hand, 
$$
D^2G^{(\ep)} = \ell_{\tau,\ep}''(W + \tau) (\nabla W \otimes \nabla W) + \ell_{\tau,\ep}'(W + \tau) D^2 W \quad \text{in } Q^+
$$
and then, always in $Q^+$, 
\begin{align*}
-L_\F(G^{(\ep)}) &= \tr((D^2 \F)^{-1}D^2G^{(\ep)})\\
& =  \ell_{\tau,\ep}''(W + \tau)  \langle (D^2 \F)^{-1}  \nabla W, \nabla W \rangle +  \ell_{\tau,\ep}'(W + \tau)  \tr((D^2 \F)^{-1}D^2 W) \\
& \geq  \ell_{\tau,\ep}'(W + \tau)  \tr((D^2 \F)^{-1}D^2 W) \geq  0.
\end{align*}
That is, $G^{(\ep)} \in \calS(Q)$ satisfies $L_\F(G^{(\ep)}) \leq 0$ and $G^{(\ep)} \geq 0$  in $Q^+$. By Theorem \ref{thm:LB} applied to $G^{(\ep)}$ with $q=1$ we have
\begin{equation}\label{Uep:meanvalq}
\sup\limits_{S_\F(X_0,\kappa \,R)} G^{(\ep)}  \leq C_{1,1} \fint\limits_{S_\F(X_0,  R)} G^{(\ep)}  \dmuF  + C_{2,1} G^{(\ep)}_{z,0^+}(S_{K_4 R}) R^s.
\end{equation}
And, by \eqref{prop:ell:tau:ep} and \eqref{Wz0:K4R:tau}, 
\begin{align*}
G^{(\ep)}_{z,0^+}(S_{K_4 R}) R^s \leq \frac{1}{\tau} W_{z,0^+}(S_{K_4 R}) R^s \leq 1, \quad \forall \ep \in (0,1),
\end{align*}
and then, by taking limits as $\ep \to 0$ in \eqref{Uep:meanvalq}, it follows that
\begin{equation*}
\sup\limits_{S_\F(X_0,\kappa \,R)} G  \leq  C_{1,1} \fint\limits_{S_\F(X_0,  R)} G  \dmuF + C_{2,1}.
\end{equation*}
Now, since 
$$
\{X \in S : W(X) \geq 1- \tau \} = \{ X \in S : W(X) + \tau \geq 1\} = \{ X \in S: G(X) = 0\},
$$
the hypothesis \eqref{blabla}
says that $\muF( \{ X \in S: G(X) = 0\}) \geq \eps \muF(S)$ and, by Corollary \ref{cor:Fabes}, 
\begin{align*}
& \fint\limits_{S_\F(X_0,  R)} G  \dmuF \leq K_P (1+ \tfrac{1}{\eps}) R^{1/2} \left(\fint\limits_{S_\F(X_0, K_2  R)} |\nF G|^2 \dmuF   \right)^{1/2}\\
 & = K_P (1+ \tfrac{1}{\eps}) R^{1/2}\left(\fint\limits_{S_\F(X_0,  K_2 R)}  |\nF \log( W + \tau)|^2 \dmuF   \right)^{1/2}.
\end{align*}
At this point we use Lemma \ref{lem:energy:log:H} with $H:=W + \tau$ in the section $S_\F(X_0,  K_2 R)$ to get
$$
\fint\limits_{S_\F(X_0,  R)} G  \dmuF \leq K_P K_d \left(1+ \frac{1}{\eps}\right)  \sqrt{ \frac{32 (n+2)}{K_2}}
$$
and then
\begin{equation}\label{def:C0eps}
\sup\limits_{S_\F(X_0,\kappa \,R)} G  \leq C_{1,1}  K_P K_d \left(1+ \frac{1}{\eps}\right)  \sqrt{ \frac{32 (n+2)}{K_2}} + C_{2,1} =: C_{0}(\eps).
\end{equation}
By the definition of $G$ in \eqref{def:G}, we have
\begin{equation}\label{supG:infW}
\sup\limits_{S_\F(X_0,\kappa \,R)} G = \ell_{\tau}\left(\inf\limits_{S_\F(X_0,\kappa \,R)} W + \tau\right)
\end{equation}
and  \eqref{infW>C} follows from \eqref{def:C0eps} and \eqref{supG:infW}. 
\end{proof}

\begin{cor}\label{cor:every:CD}
Let $\F$ be as in \eqref{def:Phi:s}.  Fix $W \in \calS(Q)$ with $W \geq  0$ and $L_\F W =  - \tr{( (D^2 \F)^{-1} D^2 W}) \geq 0$  in $Q^+$. Fix $X_0 \in \re^{n+1}$ and $R>0$ with $S_\F(X_0,K_4 R) \subset \subset Q$ and put $S:= S_\F(X_0,R)$. Then, for every $\eps \in (0,1)$ there exists $\tau \in (0,1)$, depending only on $\eps$ and geometric constants, such that the inequalities
\begin{equation*}\label{hyp:f:tau:2}
W_{z,0^+}(S_\Phi(X_0,K_4 R)) R^s \leq \frac{\tau}{1-\tau}
\end{equation*}
and  
\begin{equation}\label{hyp:meas:U:0}
\muF( \{X \in S : W(X) \geq 1 \}) \geq \eps \muF(S)
\end{equation}
imply that
\begin{equation*}
\inf\limits_{S_\F(X_0,\kappa \,R)} W \geq \frac{\tau}{1-\tau},
\end{equation*}
where $\kappa \in (0,1)$ is the geometric constant from Theorem \ref{thm:LB}.
\end{cor}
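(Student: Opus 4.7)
The plan is to obtain Corollary \ref{cor:every:CD} from Theorem \ref{thm:every:CD} by a simple affine rescaling of the sub-solution, together with a judicious choice of the parameter $\tau$. The only creative ingredient is picking $\tau$ small enough, as a function of $\eps$ and geometric constants only, so that the conclusion of Theorem \ref{thm:every:CD} absorbs the perturbation introduced by the rescaling.

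Concretely, given $\eps \in (0,1)$, I would set
\begin{equation*}
\tau := \tfrac{1}{2} e^{-C_{0}(\eps)} \in (0, \tfrac{1}{2}),
\end{equation*}
where $C_{0}(\eps)$ is the geometric quantity defined in \eqref{def:C0}. Since $C_{0}(\eps)$ depends only on $\eps$ and geometric constants, so does $\tau$. Next, given $W$ as in the hypothesis, I would introduce the auxiliary function $\widetilde{W} := (1-\tau)W$. The linearity of $L_\F$, together with the fact that $\widetilde{W} \in \calS(Q)$ inherits its properties coordinate-wise from $W$ (the class $\calS(Q)$ is a vector space), ensures that $\widetilde{W} \geq 0$ and $L_\F \widetilde{W} \geq 0$ pointwise in $Q^+$.

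The key step is to verify the two hypotheses of Theorem \ref{thm:every:CD} for $\widetilde{W}$ with the same $\tau$ chosen above. First, because $\widetilde{W}_{z,0^+} = (1-\tau) W_{z,0^+}$, the assumption $W_{z,0^+}(S_\Phi(X_0,K_4 R))\,R^s \leq \tau/(1-\tau)$ yields
\begin{equation*}
\widetilde{W}_{z,0^+}(S_\Phi(X_0,K_4 R))\,R^s \leq (1-\tau)\,\tfrac{\tau}{1-\tau} = \tau,
\end{equation*}
which matches \eqref{Wz0:K4R:tau}. Second, the pointwise inclusion $\{X \in S : W(X) \geq 1\} \subset \{X \in S : \widetilde{W}(X) \geq 1-\tau\}$, combined with \eqref{hyp:meas:U:0}, gives $\muF(\{X \in S : \widetilde{W}(X) \geq 1-\tau\}) \geq \eps\,\muF(S)$, which matches \eqref{blabla}.

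Applying Theorem \ref{thm:every:CD} to $\widetilde{W}$ then yields $\inf_{S_\F(X_0,\kappa R)} \widetilde{W} + \tau \geq e^{-C_{0}(\eps)} = 2\tau$, hence $(1-\tau)\inf_{S_\F(X_0,\kappa R)} W \geq \tau$, i.e.,
\begin{equation*}
\inf_{S_\F(X_0,\kappa R)} W \geq \frac{\tau}{1-\tau},
\end{equation*}
as desired. There is no real obstacle here; the only thing to be careful about is the bookkeeping that guarantees $\tau$ depends solely on $\eps$ and geometric constants, which is immediate from the explicit formula for $C_{0}(\eps)$ in \eqref{def:C0}.
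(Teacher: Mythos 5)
Your proposal is correct and is exactly the paper's own argument: the paper likewise defines $\tau$ by $2\tau = e^{-C_0(\eps)}$ and applies Theorem \ref{thm:every:CD} to $W_\tau := (1-\tau)W$; you have simply written out the routine verifications of the rescaled hypotheses that the paper leaves implicit.
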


\begin{proof}
Given $\eps \in (0,1)$ let $\tau \in (0,1)$ be defined by
\begin{equation}\label{def:tau}
2 \tau:= e^{-C_0(\eps)}
\end{equation}
and apply Theorem \ref{thm:every:CD} to $W_\tau:=(1-\tau)W$. 
\end{proof}

\section{The weak-Harnack inequality}\label{sec:weak:H}

In this section we prove a weak-Harnack inequality for nonnegative super-solutions in $\calS(Q)$ (see Theorem \ref{thm:weak:H}). The proof relies on establishing a ``non-homogeneous'' version of Theorem 7.1 in \cite{KS01}. Towards that end, we next adapt a result known as the ``crawling ink spots lemma'' to the elliptic Monge--Amp\`ere context. The ``crawling ink spots lemma'' has been developed by Krylov-Safonov in \cite[Section 2]{KrSa81};  the lemmas in \cite[Section 2]{KrSa81} correspond to the parabolic case;  see \cite[Lemma 1.1]{Sa80} for the elliptic case in $\rn$, and \cite[Lemma 7.2]{KS01},  for instance, for a version in doubling metric spaces).

\begin{lem}\label{lem:CIS}
Let $\F$ be as in \eqref{def:Phi:s}. Fix any $K_0  > K( 2K + 1)$, where $K \geq 1$ is the quasi-triangle constant from \eqref{def:K}. Given a section $S:=S_\F(X_0, R)$, a measurable subset $E \subset S$, and $\delta \in (0,1)$ define the open set
\begin{equation*}
E_\delta := \bigcup_{0 < \rho < K_0 R} \{S_\F(X,  \rho) \cap S: X \in S \, \text{ and } \, \muF(E \cap S_\F(X, \rho)) > \delta \muf(S_\F(X,\rho)) \}.
\end{equation*}
Then either $E_\delta = S$ or 
\begin{equation}\label{E<Ed}
\muF(E) \leq \delta K_d K_0^\nu  \, \muF(E_\delta),
\end{equation}
where $\nu \geq 1$ is as in \eqref{doubling:r:R}. 
\end{lem}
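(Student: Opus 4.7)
My plan is to follow the classical Krylov--Safonov crawling ink spots argument adapted to the space of homogeneous type $(\re^{n+1}, \delta_\F, \mu_\F)$, which is such by the quasi-triangle inequality \eqref{def:K} and the doubling property \eqref{def:Kd}. The core tool will be a Vitali-type covering argument tailored to Monge--Amp\`ere sections.

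I would first invoke Lebesgue's differentiation theorem in this doubling quasi-metric setting to conclude that $\mu_\F$-a.e.\ $X \in E$ is a density point, so that for every sufficiently small $\rho > 0$ one has $\mu_\F(E \cap S_\F(X, \rho)) > \delta \mu_\F(S_\F(X, \rho))$. This immediately yields $E \subset E_\delta$ modulo a $\mu_\F$-null set, accounting for the trivial direction of the relationship between $E$ and $E_\delta$.

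Assume then that $E_\delta \neq S$ and fix $Y_0 \in S \setminus E_\delta$. By the defining property of $E_\delta$, any section $S_\F(X, \rho)$ with $X \in S$, $\rho \in (0, K_0 R)$, and $Y_0 \in S_\F(X, \rho)$ must satisfy $\mu_\F(E \cap S_\F(X, \rho)) \leq \delta \mu_\F(S_\F(X, \rho))$. The quasi-triangle inequality \eqref{def:K} with pivot $X_0$ gives $\delta_\F(X, Y_0) < 2KR$ for every $X \in S$, so that $Y_0 \in S_\F(X, \rho)$ automatically whenever $\rho \geq 2KR$. For each density point $X \in E$ I would then define
$$\rho_X := \sup\{\rho \in (0, K_0 R) : \mu_\F(E \cap S_\F(X, \rho)) > \delta \mu_\F(S_\F(X, \rho))\},$$
obtaining $0 < \rho_X \leq 2KR$. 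The hypothesis $K_0 > K(2K+1)$ is precisely what leaves room for an engulfing-based constant-factor enlargement of $S_\F(X, \rho_X)$ to remain inside radius $K_0 R$, so that maximality of $\rho_X$ forces the density of $E$ in the enlargement to be at most $\delta$.

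I would then run a Caffarelli--Guti\'errez style Vitali selection, using the engulfing property of Monge--Amp\`ere sections that follows from \eqref{def:K}, to extract from $\{S_\F(X, \rho_X) : X \text{ density point of } E\}$ a countable pairwise disjoint subfamily $\{S_\F(X_j, \rho_j)\}_{j \in \na}$ such that $E \subset \bigcup_j S_\F(X_j, c_0 \rho_j)$ for a geometric constant $c_0$ with $c_0 \rho_j < K_0 R$. Maximality of $\rho_j$ then gives $\mu_\F(E \cap S_\F(X_j, c_0 \rho_j)) \leq \delta \mu_\F(S_\F(X_j, c_0 \rho_j))$, and the doubling property \eqref{doubling:r:R} applied with ratio $K_0$ yields
\begin{align*}
\mu_\F(E) & \leq \sum_j \mu_\F(E \cap S_\F(X_j, c_0 \rho_j)) \leq \delta \sum_j \mu_\F(S_\F(X_j, c_0 \rho_j)) \\
& \leq \delta K_d K_0^\nu \sum_j \mu_\F(S_\F(X_j, \rho_j)) \leq \delta K_d K_0^\nu \mu_\F(E_\delta),
\end{align*}
where the last inequality uses the pairwise disjointness of the selected sections together with $\bigcup_j S_\F(X_j, \rho_j) \subset E_\delta$. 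This is exactly \eqref{E<Ed}. I expect the main obstacle to be the careful bookkeeping of geometric constants in the Vitali-type extraction, ensuring the engulfing enlargement $c_0$ fits within the window dictated by $\rho_X \leq 2KR$ and $K_0 > K(2K+1)$; this is precisely where the specific threshold $K(2K+1)$ becomes essential.
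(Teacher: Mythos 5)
Your proposal takes essentially the same route the paper does: a Krylov--Safonov crawling ink spots argument built on a Vitali covering lemma for Monge--Amp\`ere sections; the paper's proof simply cites Kinnunen--Shanmugalingam (Lemma 7.2) for the ink spots mechanism and Coifman--Weiss (Theorem 1.2) for the covering, and your sketch reconstructs exactly that structure (density points, a stopping scale $\rho_X$, a Vitali selection, maximality plus doubling).

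Two steps, however, do not close as written. First, you establish $\rho_X \leq 2KR$, and the Vitali enlargement constant produced by \eqref{def:K} in the Coifman--Weiss argument is $c_0 = K(2K+1)$; to invoke maximality at scale $c_0\rho_j$ you need $c_0\rho_j < K_0 R$, which with $\rho_j \leq 2KR$ forces $K_0 > 2K^2(2K+1)$, strictly stronger than the stated hypothesis $K_0 > K(2K+1)$. If instead you set $c_0 = K_0$ (to produce the factor $K_d K_0^\nu$ directly from \eqref{doubling:r:R}), then maximality requires $\rho_j < R$, which is weaker than the bound $\rho_j \leq 2KR$ you actually prove. So the bookkeeping you flag as ``the main obstacle'' is not a routine detail: as written, the admissible window of scales is too narrow for your covering, and you would need to either tighten the bound on $\rho_X$ or select smaller sections whose enlargements stay below $K_0R$. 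Second, your final chain asserts $\bigcup_j S_\F(X_j, \rho_j) \subset E_\delta$, but by definition $E_\delta \subset S$, while $S_\F(X_j,\rho_j)$ can spill outside $S$ when $X_j$ is near $\partial S$; what one actually obtains is $S_\F(X_j,\rho_j)\cap S \subset E_\delta$, so you must either arrange for the selected sections to lie inside $S$ or invoke a geometric lemma comparing $\mu_\F(S_\F(X_j,\rho_j))$ with $\mu_\F(S_\F(X_j,\rho_j)\cap S)$ and carry the resulting constant into \eqref{E<Ed}.
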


\begin{proof}
The proof follows as the one for \cite[Lemma 7.2]{KS01} by means of Vitali's covering lemma for Monge--Amp\`ere sections. In turn, Vitali's covering lemma for Monge--Amp\`ere sections follows as in the proof of Theorem 1.2 in \cite[p.69]{CW71} for general spaces of homogeneous type. In the Monge--Amp\`ere case, the dilation constant in Vitali's lemma can be any $K_0$ with $K_0 > 2 K^2 + K$. 
\end{proof}

\begin{thm}\label{thm:weak:H}
Let $\F$ be as in \eqref{def:Phi:s}.  There exist geometric constants $\sigma \in (0,1)$ and $K_6, K_7 > 1$ such that for every $W \in \calS(Q)$ with  $L_\F W =  - \tr{( (D^2 \F)^{-1} D^2 W}) \geq 0$ and $W \geq  0$ in $Q^+$ and every $(X_0, R) \in \re^{n+1} \times(0,\infty)$ with $S_\F(X_0,K_7R) \subset \subset Q$, we have
\begin{align}\label{weak:H:W}
&\left(\fint_{S_\F(X_0,R)}W (X)^\sigma \muF(X) \right)^\frac{1}{\sigma} \leq K_6 \left(\inf\limits_{S_\F(X_0, \kappa R)} W +  R^s W_{z,0+}(S_\F(X_0,K_7R))  \right),
\end{align}
where $\kappa \in (0,1)$ is the geometric constant from Theorem \ref{thm:LB}.
\end{thm}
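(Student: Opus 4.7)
The plan is to follow the classical Krylov--Safonov / Caffarelli--Guti\'errez strategy, combining the arbitrarily sensitive critical-density estimate of Corollary \ref{cor:every:CD} with the ink-spots Lemma \ref{lem:CIS} to obtain geometric decay of the distribution function of $W$, and then converting this into the $L^\sigma$-bound \eqref{weak:H:W} via Cavalieri's principle. The role of the inhomogeneity $R^s W_{z,0+}$ is to tune the smallness hypothesis in Corollary \ref{cor:every:CD}, and the normalization of $W$ reduces everything to the homogeneous case.

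By rescaling $W$ by the sum
$A := \inf_{S_\F(X_0,\kappa R)} W + R^s W_{z,0+}(S_\F(X_0,K_7 R))$
(times a geometric constant to be chosen), it suffices to prove that
$\fint_{S_\F(X_0,R)} W^\sigma \dmuF \leq K_6^\sigma$
for some geometric $\sigma \in (0,1)$, under the normalization $\inf_{S_\F(X_0,\kappa R)} W + R^s W_{z,0+}(S_\F(X_0,K_7 R)) \leq 1/M$, where $M>1$ will be fixed below. Choose $\delta \in (0, 1/(K_d K_0^\nu))$ geometric, let $\tau = \tau(\delta) \in (0,1)$ be the parameter produced by Corollary \ref{cor:every:CD}, and set $M := (1-\tau)/\tau > 1$, so that the critical-density estimate promotes a level-set density hypothesis at height $t$ into the pointwise bound $W \geq t/M$ on a concentric subsection. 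Choose also $K_7$ large enough (using the quasi-triangle inequality \eqref{def:K} and doubling \eqref{doubling:r:R}) so that for every $Y \in S_\F(X_0, R)$ and every $\rho \in (0, K_0 R)$ the enlargement $S_\F(Y, K_4 \rho/\kappa)$ is contained in $S_\F(X_0, K_7 R)$. Define $E_k := \{X \in S_\F(X_0, R) : W(X) > M^k\}$ and $a_k := \muF(E_k)$; the target is
$a_k \leq (\delta K_d K_0^\nu)^k \,\muF(S_\F(X_0, R))$ for all $k \geq 0$, which is the promised geometric decay.

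The induction step is the heart of the argument. Apply Lemma \ref{lem:CIS} to $E_k$ with parameter $\delta$: either $(E_k)_\delta = S_\F(X_0, R)$, or $a_k \leq \delta K_d K_0^\nu \,\muF((E_k)_\delta)$. The key claim is the inclusion $(E_k)_\delta \subset E_{k-1}$. Given $X \in (E_k)_\delta$, pick a section $S' = S_\F(Y, \rho)$ containing $X$ with $\muF(E_k \cap S') > \delta \muF(S')$. The main obstacle is that Corollary \ref{cor:every:CD} produces a pointwise lower bound only on the contraction $\kappa S'$, which need not contain $X$. I resolve this by passing to the enlarged section $\widetilde{S} := S_\F(Y, \rho/\kappa)$, for which $\kappa \widetilde{S} = S'$ and, by doubling \eqref{doubling:r:R}, $\muF(E_k \cap \widetilde{S}) > \delta' \muF(\widetilde{S})$ for a geometric $\delta'$. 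Applying Corollary \ref{cor:every:CD} to $W/M^k$ on $\widetilde{S}$ with parameter $\delta'$ (whose smallness hypothesis $W_{z,0+}(S_\F(Y,K_4\rho/\kappa))(\rho/\kappa)^s \leq \tau(\delta')/(1-\tau(\delta'))$ is inherited from the normalization and the choice of $K_7$), together with the definition of $M$, yields $W \geq M^{k-1}$ on $S'$, whence $X \in E_{k-1}$. This proves the inclusion. The alternative $(E_k)_\delta = S_\F(X_0, R)$ is ruled out for $k \geq 1$ by running the same argument at a point $X_*\in S_\F(X_0,\kappa R)$ with $W(X_*) \leq 1/M < M^{k-1}$.

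Finally, the geometric decay $a_k \leq r^k \muF(S_\F(X_0, R))$ with $r := \delta K_d K_0^\nu < 1$ is converted to the $L^\sigma$-bound via
\[
\int_{S_\F(X_0, R)} W^\sigma \dmuF = \sigma \int_0^\infty t^{\sigma-1}\,a(t)\,dt \leq \muF(S_\F(X_0, R))\Bigl(1 + \sigma \sum_{k=0}^\infty M^{(k+1)\sigma} r^k\Bigr),
\]
where $a(t) := \muF(\{W>t\}\cap S_\F(X_0,R))$; choosing $\sigma>0$ small enough that $M^\sigma r < 1$ makes the series converge to a geometric constant, and undoing the normalization produces \eqref{weak:H:W} with geometric $K_6$. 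The H\"older-type conclusion would follow subsequently by pairing this weak-Harnack with the local-boundedness Theorem \ref{thm:LB}, but the statement here only claims \eqref{weak:H:W}.
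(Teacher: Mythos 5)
Your proposal follows essentially the same Krylov--Safonov route as the paper: the arbitrarily sensitive critical-density estimate (Corollary \ref{cor:every:CD}), the ink-spots Lemma \ref{lem:CIS}, and Cavalieri's principle to convert geometric decay of the distribution function into the $L^\sigma$ mean. The core obstacle you identify --- that the critical-density estimate controls $W$ only on the $\kappa$-contraction of a section, which may miss the point $X$ --- is precisely the one the paper resolves, and your fix (pass to $\widetilde{S}=S_\F(Y,\rho/\kappa)$ and absorb the doubling loss into a smaller density parameter $\delta'$) is the paper's move as well. The main presentational difference is that you normalize $W$ by the right-hand side up front and then run the iteration with level sets $E_k\subset S_\F(X_0,R)$, whereas the paper keeps the inhomogeneity explicit, works with level sets $A_{t,i}\subset S_\F(X_0,\kappa R)$, fixes a height $t$, and splits into two cases according to whether $K_0^s R^s W_{z,0^+}$ exceeds $t\lambda^i$; the bad case is then discharged immediately, and only the good case requires iteration. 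Both bookkeepings lead to the same place.

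Two small bookkeeping points in your write-up should be tightened. First, you take $\tau=\tau(\delta)$ in defining $M$, but Corollary \ref{cor:every:CD} is then invoked with the \emph{smaller} parameter $\delta'=\delta\kappa^\nu/K_d$, which produces $\tau(\delta')$; for the conclusion $\inf_{S'}W\ge M^{k-1}$ to fall out cleanly you want $M:=(1-\tau(\delta'))/\tau(\delta')$ throughout (this is exactly what the paper does by setting $\eps=\delta\kappa^\nu/K_d$ and $2\tau=e^{-C_0(\eps)}$). Second, when you say the smallness hypothesis is ``inherited from the normalization,'' you should note that the enlarged section $\widetilde S$ has height $\rho/\kappa< K_0 R/\kappa$, so the required inequality picks up a factor $(K_0/\kappa)^s$; your phrase ``times a geometric constant to be chosen'' can absorb this, but it should be spelled out that the normalization constant must be at least $M\cdot(K_0/\kappa)^s$ so that $W_{z,0^+}(S_\F(Y,K_4\rho/\kappa))(\rho/\kappa)^s\le 1/M\le M^{k-1}$ for all $k\ge 1$. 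Neither issue affects the structure of the argument; they are one-line fixes.
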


\begin{proof}
Let us start by defining $K_7 > 1$ as
\begin{equation}\label{def:K7}
K_7 := K (K_4 K_0 + \kappa),
\end{equation}
with $K_4 \geq K$ being the geometric constant from Theorem \ref{thm:LB}. Let us fix $\delta \in (0,1)$ such that 
\begin{equation}\label{def:delta0}
\delta_0 := \delta K_d K_0^\nu < 1,
\end{equation}
where $K_d K_0^\nu$  is the product of geometric constants from \eqref{E<Ed} in Lemma \ref{lem:CIS}, and for $\delta_0$ as in \eqref{def:delta0} choose $\eps \in (0,1)$ such that
\begin{equation}\label{def:eps:CD}
\eps:= \frac{\delta \kappa^\nu}{K_d} < \delta_0.
\end{equation}
With this choice of $\eps$, and recalling the definition of $C_0(\eps)$ in \eqref{def:C0}, define $\tau \in (0,1)$ by means of \eqref{def:tau}
and put $\lambda:= \tau/(1-\tau) \in (0,1)$, which makes $\tau$, $\delta_0$, and $\lambda$ geometric constants. 

For $t > 0$ and $i \in \naz$ set
\begin{equation*}
A_{t, i}:= \{X \in S_\F(X_0, \kappa R): W(X) \geq t \lambda^i\}
\end{equation*}
and let $j = j(t)\in \na$ satisfy
\begin{equation}\label{def:j}
\delta_0^j \leq \frac{\muF(A_{t, 0})}{\muF(S_\F(X_0, R))} \leq \delta_0^{j-1},
\end{equation}
which yields
\begin{equation}\label{At0<j1}
\left(  \frac{\muF(A_{t, 0})}{\muF(S_\F(X_0, R))}  \right)^\gamma \leq \lambda^{j-1}
\end{equation}
where
\begin{equation*}
\gamma:= \frac{\log \lambda}{\log \delta_0}
\end{equation*}
is a geometric constant. We will show that
\begin{equation}\label{tj1<}
t \lambda^{j-1} \leq K_8 \left( \inf\limits_{S_\F(X_0, \kappa R)} W +  R^s W_{z,0+}(S_\F(X_0,K_7R)) \right),
\end{equation}
where $K_8 > 1$ is the geometric constant defined as
\begin{equation}\label{def:K8}
K_8:= K_0^s \lambda^{-1}. 
\end{equation}
Indeed, given $i \in \{1,\ldots, j\}$ we consider two cases: when
\begin{equation}\label{case2:F>}
 K_0^s R^s W_{z,0+}(S_\F(X_0,K_7R))  > t \lambda^i
\end{equation}
and when
\begin{equation}\label{case1:F<}
  K_0^s R^s W_{z,0+}(S_\F(X_0,K_7R)) \leq t \lambda^i.
\end{equation}
If \eqref{case2:F>} holds true for some $i \in \{1,\ldots, j\}$, then \eqref{tj1<} is immediate from the definition of $K_8$ in \eqref{def:K8} and the fact that $\lambda \in (0,1)$ and  $i \in \{1,\ldots, j\}$ imply  $\lambda^i \geq \lambda^{j}$. 

Suppose then that \eqref{case1:F<} holds true for every  $i \in \{1,\ldots, j\}$.  We will prove \eqref{tj1<} by repeatedly applying Corollary \ref{cor:every:CD} and Lemma \ref{lem:CIS}. 

If for some $X \in S_\F(X_0, \kappa R)$ and $\rho \in (0, \kappa K_0  R)$ we have
$$
\muF(A_{t, i-1} \cap S_\F(X, \rho)) > \delta \muF(S_\F(X, \rho)),
$$
(with $\delta$ as in \eqref{def:delta0})
by the doubling property \eqref{doubling:r:R} (and recalling that $\kappa \in (0,1)$) it follows that 
\begin{align*}
\muF(\{Y \in S_\F(X, \rho/\kappa):   \frac{W(Y)}{t \lambda^{i-1}} \geq 1\})& \geq \muF(A_{t, i-1} \cap S_\F(X, \rho)) > \delta  \muF(S_\F(X, \rho))\\
& \geq \frac{\delta \kappa^\nu}{K_d}  \muF( S_\F(X, \rho/\kappa))= \eps  \muF( S_\F(X, \rho/\kappa)),
\end{align*}
where for the last equality we used the definition of $\eps$ in \eqref{def:eps:CD}. Next, let us see that $\rho \in  (0, \kappa K_0  R)$ and $X \in S_\F(X_0, \kappa R)$ imply the inclusion
\begin{equation}\label{incl:SX:SX0}
S_\F(X, K_4 \rho/\kappa) \subset S_\F(X_0, K_7 R).
\end{equation}
Indeed, given $Y \in S_\F(X, K_4 \rho/\kappa)$, by the $K$-quasi-triangle inequality \eqref{def:K}
\begin{align*}
\delta_\F(X_0, Y) \leq K (\delta_\F(X,Y) + \delta_\F(X_0, X)) & < K \left(\frac{K_4 \,\rho}{\kappa} + \kappa R \right)\\
& \leq K \left(K_4 K_0 R + \kappa R \right) = K_7 R,
\end{align*}
where for the last equality we used the definition of $K_7$ in \eqref{def:K7}. Now, from the fact that $\rho/\kappa < K_0 R$, the inclusion \eqref{incl:SX:SX0}, and the hypothesis \eqref{case1:F<}, we get
\begin{align*}
& (\rho/\kappa)^s W_{z,0+}(S_\F(X,  K_4 \rho/ \kappa))  \leq K_0^s R^s  W_{z,0+}(S_\F(X_0, K_7 R)) \leq t \lambda^i = t \lambda^{i-1} \frac{\tau}{1-\tau}.
\end{align*}
Then, Corollary \ref{cor:every:CD} applied to $\frac{W(X)}{t \lambda^{i-1}}$ on the section $S_\F(X, \rho/\kappa)$ with $\eps$ as in \eqref{def:eps:CD} yields 
\begin{equation*}
\inf\limits_{S_\F(X, \rho)} W \geq t \lambda^i
\end{equation*}
and, consequently, $S_\F(X_0, \kappa R) \cap S_\F(X, \rho) \subset A_{t, i}$. By Lemma \ref{lem:CIS} applied to the section $S_\F(X_0, \kappa R)$ and the set $E:= A_{t, i-1}$ it follows that either $A_{t, i-1} = S_\F(X_0, \kappa R)$ or
\begin{equation}\label{Ai1Ai}
\frac{1}{\delta_0} \muF(A_{t, i-1}) \leq \muF(E_\delta) \leq \muF(A_{t, i}),
\end{equation}
where $\delta_0 \in (0,1)$ is as in \eqref{def:delta0}. Now, if $A_{t, i-1} = S_\F(X_0, \kappa R)$  for some  $i \in \{1,\ldots, j\}$, then (due to the inclusion $A_{t, i-1} \subset A_{t, j-1}$) we have  $A_{t, j-1} = S_\F(X_0, \kappa R)$, which means
$$
\inf\limits_{S_\F(X_0, \kappa R)} V \geq t \lambda^{j-1},
$$
and the inequality \eqref{tj1<} follows. Hence, we can assume that \eqref{Ai1Ai} holds for every $i \in \{1, \ldots, j\}$ and then write
\begin{align*}
\muF(A_{t, j-1}) \geq \frac{1}{\delta_0} \muF(A_{t, j-2})  \geq \cdots \geq \frac{\muF(A_{t, 0}) }{\delta_0^{j-1}} \geq \delta_0 \, \muF(S_\F(X_0, \kappa R),
\end{align*}
where for the last inequality we used the definition of $j$ in \eqref{def:j}. In particular,
\begin{align*}
\muF(\{X \in S_\F(X_0, \kappa R): W(X) \geq t \lambda^{j-1}\})& = \muF(A_{t, j-1}) \geq \delta_0 \, \muF(S_\F(X_0, \kappa R)\\
& > \eps \, \muF(S_\F(X_0, \kappa R))
\end{align*}
and, using that $\kappa < K_0$ and $\kappa K_4 < K_7$, by \eqref{case1:F<} applied with $i=j$, we obtain
\begin{align*}
& (\kappa R)^s W_{z,0^+}(S_\F(X_0, \kappa K_4  R)) \leq K_0^s R^s W_{z,0^+}(S_\F(X_0, K_7  R)) < t \lambda^j = t \lambda^j \frac{\tau}{1-\tau},
\end{align*}
so that Corollary \ref{cor:every:CD} applied to $\frac{W(X)}{t \lambda^{j-1}}$  on the section $S_\F(X_0, \kappa R)$ with $\eps$ as in \eqref{def:eps:CD} yields 
\begin{equation*}
\inf\limits_{S_\F(X_0, \kappa R)} W \geq t \lambda^j
\end{equation*}
and \eqref{tj1<} follows. Now, by setting $\xi: = K_8 ( \inf\limits_{S_\F(X_0, \kappa R)} W + R^s W_{z,0+}(S_\F(X_0,K_7R)))$, from \eqref{At0<j1} and \eqref{tj1<} we obtain
\begin{equation*}
\muF(\{X \in S_\F(X_0, \kappa R): W(X) \geq t \}) = \muF(A_{t,0}) \leq \left(\frac{\xi}{t}\right)^\frac{1}{\gamma} \quad \forall t > 0 
\end{equation*}
and then, for any $\sigma \in (0, 1/\gamma)$,
\begin{align*}
&\fint_{S_\F(X_0, \kappa R)} W(X)^\sigma \dmuF(X)\\
& \leq \sigma \int_0^\xi t^{\sigma -1} \, dt +  \frac{\sigma}{\muF(S_\F(X_0, \kappa R))} \int_\xi^\infty t^{\sigma-1}  \muF(A_{t,0}) \, dt \leq \left(\frac{\gamma}{1- \sigma \gamma} + 1 \right)\xi^\sigma,
\end{align*}
which proves \eqref{weak:H:W} with $K_6:= \left(\frac{\gamma}{1- \sigma \gamma} + 1 \right)^{1/\sigma} K_8$. 
\end{proof}

\section{Proofs of Theorems \ref{H:nonDiver} and \ref{thm:H:Vtilde}}\label{sec:proofs:main}

We are now position to prove Theorems \ref{H:nonDiver} and \ref{thm:H:Vtilde}. Let us start with

\begin{proof}[Proof of Theorem  \ref{thm:H:Vtilde}.]
By Theorem \ref{thm:weak:H} applied to $W \in \calS(Q)$, we get
\begin{align*}\label{weak:W:V}
&\left(\fint_{S_\F(X_0,R)} W(X)^\sigma \muF(X) \right)^\frac{1}{\sigma} \leq K_6 \left(\inf\limits_{S_\F(X_0, \kappa R)} W +  R^s W_{z,0+}(S_\F(X_0,K_7R))  \right).
\end{align*}
Now, by Theorem \ref{thm:LB} applied to $W$ with $q=\sigma$,
\begin{align*}
\sup\limits_{S_\F(X_0,\kappa \,R)} W& \leq  C_{1,\sigma} \left(\fint\limits_{S_\F(X_0,  R)} W(X)^\sigma  \dmuF(X) \right)^{\frac{1}{\sigma}} +  C_{2,\sigma}  R^sW_{z,0+}(S_\F(X_0,K_4R)),
\end{align*}
and notice that, from \eqref{def:K7}, we have $K_4 \leq K_7$. Hence, the Harnack inequality \eqref{H:W} follows with $\widetilde{C}_H:= C_{1,\sigma} K_6 + C_{2, \sigma}$. 

Next, for $0 < r < R$, consider the functions 
$$
M(r):= \sup\limits_{S_\F(X_0,r)} W \quad \text{and} \quad m(r):= \inf\limits_{S_\F(X_0,r)} W.
$$
A standard argument (see for instance \cite[Section 8.9]{Gilbarg-Trudinger}) implies the existence of geometric constants $\varrho \in (0,1)$ and $K_{11} >0$ such that
\begin{equation}\label{Mr:mr}
M(r) - m(r) \leq K_{11}  r^\varrho \left(\sup\limits_{S_\F(X_0,R)} W + R^sW_{z,0+}(S_\F(X_0,K_4R)\right), \quad \forall r \in (0, R),
\end{equation}
which, in turn, implies \eqref{Holder:W}. 
\end{proof}

\begin{proof}[Proof of Theorem  \ref{H:nonDiver}.]
Given a section $S_0:=S_\f(p_0, R_0)$,  $f\in C_0(\overline{S_0})$, and a nonnegative  $v\in\Dom_{S_0}(L_\f)$ solution to $L_\f^sv=f$ in $S_0$, let $V$ be the solution to the extension problem \eqref{eq:extension nondivergence form}. In particular, we have $V \in C_0(\overline{Q})$ and $\lim_{z\to0^+}V(x,z)=v(x)$ uniformly in $S_0$. 

Now, let us set $K_9:=2 K_7$ so that given a section $S:=S_\f(x_0, R)$ with $S_\f(x_0, K_9 R) \subset \subset S_0$, by the first inclusion in \eqref{S:vs:SxS} we get
\begin{align*}
S_\F((x_0, 0), 2 K_7 R) & \subset S_\f(x_0, 2 K_7 R) \times S_{h_s}(0, 2K_7 R)\\
& =  S_\f(x_0, K_9 R) \times S_{h_s}(0, K_9 R) \subset \subset S_0 \times \re = Q,
\end{align*}
and then the Harnack inequality \eqref{H:W} for $\widetilde{V}$ on the section $S_\F((x_0, 0), 2R)$ gives
\begin{equation}\label{H:Vtilde:0}
\sup\limits_{S_\F((x_0, 0), 2 \kappa R)} \widetilde{V} \leq \widetilde{C}_H \left( \inf\limits_{S_\F((x_0, 0), 2 \kappa R)} \widetilde{V} + R^s  \widetilde{V}_{z,0+}(S_\F((x_0,0), 2 K_7R)) \right).
\end{equation}
On the other hand, the second inclusion in \eqref{S:vs:SxS} yields
\begin{align*}
S_\f(x_0, \kappa R) \times \{0\} \subset S_\f(x_0, \kappa R) \times S_{h_s}(0, \kappa R) \subset S_\F((x_0, 0), 2 \kappa R)
\end{align*}
and, since $\widetilde{V}(x, 0) = V(x,0) = v(x)$ for every $x \in S_0$, it follows that
$$
\sup\limits_{S_\f(x_0, \kappa R)} v =  \sup\limits_{S_\f(x_0, \kappa R)} \widetilde{V}(\cdot, 0) 
\leq \sup\limits_{S_\F((x_0, 0), 2 \kappa R)} \widetilde{V}
$$ 
as well as
$$
\inf\limits_{S_\F((x_0, 0), 2 \kappa R)} \widetilde{V} \leq \inf\limits_{S_\f(x_0, \kappa R)} \widetilde{V}(\cdot, 0) = \inf\limits_{S_\f(x_0, \kappa R)} v,  
$$ 
which, along with \eqref{H:Vtilde:0} implies
\begin{equation}\label{H:v:Vz}
\sup\limits_{S_\f(x_0, \kappa R)} v \leq \widetilde{C}_H \left(\inf\limits_{S_\f(x_0, \kappa R)} v + R^s  \widetilde{V}_{z,0+}(S_\F((x_0,0), 2 K_7R)) \right).
\end{equation}
But, by the inclusion $S_\F((x_0, 0), 2 K_7 R)  \subset S_\f(x_0, 2 K_7 R) \times S_{h_s}(0, 2K_7 R)$, we have
$$
S_\F((x_0, 0), 2 K_7 R) \cap Z_0 \subset S_\f(x_0, 2 K_7 R) \times \{0\},
$$
so that from the definition of $\widetilde{V}_{z,0+}(S_\F((x_0,0), 2 K_7R)) $ in \eqref{def:F0+} and the limit in \eqref{Vz0=Lv}, we get
\begin{equation}\label{Vz0:Linfty}
\widetilde{V}_{z,0+}(S_\F((x_0,0), 2 K_7R))  \leq d_s \|L_\f^sv\|_{L^\infty(S_\f(x_0, 2 K_7R))}.
\end{equation}
Then, the Harnack inequality \eqref{H:v} follows from \eqref{H:v:Vz} and \eqref{Vz0:Linfty} with $K_9:=2 K_7$ and $C_H:= d_s \widetilde{C}_H$.

Also by a restriction argument, the Monge--Amp\`ere H\"older estimate \eqref{Holder:v} follows from \eqref{Holder:W} applied to $\widetilde{V}$ and then restricting to $Z_0$. Notice that in principle we cannot prove \eqref{Holder:v} directly from the Harnack inequality \eqref{H:v}, as we did to go from \eqref{H:W} to \eqref{Holder:W} via \eqref{Mr:mr}. This is due to the fact that if $v \in \Dom_S(L_\f)$ and $C \in \re \setminus\{0\}$, then it is not true that $v - C\in \Dom_S(L_\f)$ because $v-C$ does not vanish on $\partial S$. 
\end{proof}

\section*{Acknowledgements}

The authors would like to thank the referee for suggestions that helped to improve the presentation of the paper. The first author was supported by NSF under grant DMS 1361754. The second author was partially supported by grant MTM2015-66157-C2-1-P (MINECO/FEDER) from Government of Spain.




\begin{thebibliography}{10}

\bibitem{Arendt} W. Arendt, C. J. K. Batty, M. Hieber, and F. Neubrander,
{\it Vector-valued Laplace Transforms and Cauchy Problems},
Monographs in Mathematics \textbf{96},
Birkh\"auser, Basel, 2001.

\bibitem{Arendt-Schatzle} W. Arendt and R. M. Sch\"atzle,
{Semigroups generated by elliptic operators in nondivergence form on $C_0(\Omega)$},
\textit{Ann. Sc. Norm. Super. Pisa. Cl. Sci. (5)}
\textbf{XIII} (2014), 1--18.

\bibitem{Caffarelli-Charro} L. A. Caffarelli and F. Charro,
{On a fractional Monge--Amp\`ere operator},
\textit{Ann. PDE} \textbf{1} (2015), 1--47. 

\bibitem{caffaguti1} L. A. Caffarelli  and C. Guti\'errez,
{Real analysis related to the Monge--Amp\`ere equation},
\textit{Trans. Amer. Math. Soc.} \textbf{348} (1996), 1075--1092.

\bibitem{Caffarelli-Gutierrez} L. A. Caffarelli and C. Guti\'errez,
{Properties of the solutions of the linearized Monge--Amp\`ere equation},
\textit{Amer. J. Math.} \textbf{119} (1997), 423--465.

\bibitem{Caffarelli-Silvestre CPDE} L. A. Caffarelli and L. Silvestre,
{An extension problem related to the fractional Laplacian},
\textit{Comm. Partial Differential Equations}
\textbf{32} (2007), 1245--1260.

\bibitem{Caffarelli-Silvestre CAG} L. A. Caffarelli and L. Silvestre,
{A nonlocal Monge--Amp\`ere equation},
\textit{Comm. Anal. Geom.}
\textbf{24} (2016), 307--335.

\bibitem{Caffarelli-Stinga} L. A. Caffarelli and P. R. Stinga,
{Fractional elliptic equations, Caccioppoli estimates and regularity},
\textit{Ann. Inst. H. Poincar\'e Anal. Non Lin\'eaire}
\textbf{33} (2016), 767--807.

\bibitem{CW71} R. R. Coifman and G. Weiss,
\textit{Analyse Harmonique Non-commutative Sur Certains Espaces Homog\`enes},
Lecture Notes in Mathematics \textbf{242},
Springer, 1971.

\bibitem{Evans} L. C. Evans,
\textit{Partial Differential Equations},
Graduate Studies in Mathematics \textbf{19}, Second Edition,
American Mathematical Society,
Providence, RI, 2010. 

\bibitem{FM09} L. Forzani and D. Maldonado,
{A mean-value inequality for nonnegative solutions to the linearized Monge--Amp\`ere equation},
\textit{Potential Anal.}
\textbf{30} (2009), 251--270.

\bibitem{FM04} L. Forzani and D. Maldonado,
{Properties of the solutions to the Monge--Amp\`ere equation},
\textit{Nonlinear Anal.}
\textbf{57} (2004), 815--829.

\bibitem{Gale-Miana-Stinga} J. E. Gal\'e, P. J. Miana and P. R. Stinga,
{Extension problem and fractional operators: semigroups and wave equations},
\textit{J. Evol. Equ.}
\textbf{13} (2013), 343--368.

\bibitem{Gilbarg-Trudinger} D. Gilbarg and N. S. Trudinger,
\textit{Elliptic Partial Differential Equations of Second Order},
Reprint of the 1998 edition,
Classics in Mathematics,
Springer-Verlag, Berlin, 2001.

\bibitem{Gr2} L. Grafakos,
\textit{Modern Fourier Analysis},
Graduate Texts in Mathematics \textbf{250}, Second Edition,
Springer-Verlag, 2009. 

\bibitem{guti} C. Guti\'errez, 
\textit{The Monge--Amp\`ere Equation},
Progress in Nonlinear Differential Equations and Their Applications \textbf{44},
Birkh\"auser, 2001.

\bibitem{GutiNguyenAdvMath11} C. Guti\'errez and T. Nguyen,
{Interior gradient estimates for solutions to the linearized Monge--Amp\`ere equation},
\textit{Adv. Math.}
\textbf{228} (2011), 2034--2070.

\bibitem{GutiNguyenTAMS15} C. Guti\'errez and T. Nguyen,
{Interior second derivative estimates for solutions to the linearized Monge--Amp\`ere equation},
\textit{Trans. Amer. Math. Soc.}
\textbf{367} (2015), 4537--4568.

\bibitem{GutiTournierTAMS06} C. Guti\'errez and F. Tournier,
{$W^{2,p}$-estimates for the linearized Monge--Amp\`ere equation},
\textit{Trans. Amer. Math. Soc.}
\textbf{358} (2006), 4843--4872.

\bibitem{KS01} J. Kinnunen and N. Shanmugalingam,
{Regularity of quasi-minimizers on metric spaces},
\textit{Manuscripta Math.}
\textbf{105} (2001), 401--423.

\bibitem{KrSa81} N. V. Krylov and M. V. Safonov,
{A certain property of solutions of parabolic equations with measurable coefficients},
\textit{USSR Izvestija}
\textbf{6} (1981), 151--164.

\bibitem{LeCCM} N. Q. Le,
{On the Harnack inequality for degenerate and singular elliptic equations with unbounded lower order terms via sliding paraboloids},
\textit{Commun. Contemp. Math}, to appear. 

\bibitem{LeMaMa16} N. Q. Le,
{Remarks on the Green's function of the linearized Monge-Amp\`re operator},
\textit{Manuscripta Math.}
\textbf{149} (2016), 45--62.

\bibitem{LeJDE16} N. Q. Le,
{$W^{4, p}$ solution to the second boundary value problem of the prescribed affine mean curvature and Abreu's equations}, \textit{J. Differential Equations}
\textbf{260} (2016), 4285--4300.

\bibitem{LeNguyenJFA13}  N. Q. Le and T. Nguyen,
{Geometric properties of boundary sections of solutions to the Monge--Amp\`ere equation and applications},
\textit{J. Funct. Anal.}
\textbf{264} (2013), 337--361.

\bibitem{LeNguyenMathAnn14} N. Q. Le and T. Nguyen,
{Global $W^{2,p}$ estimates for solutions to the linearized Monge--Amp\`ere equations},
\textit{Math. Ann.}
\textbf{358} (2014), 629--700.

\bibitem{LeSavinARMA13} N. Q. Le and O. Savin,
{Boundary regularity for solutions to the linearized Monge--Amp\`ere equations},
\textit{Arch. Ration. Mech. Anal.}
\textbf{210} (2013), 813--836.

\bibitem{LeSavinPAMS15} N. Q. Le and O. Savin,
{On boundary H\"older gradient estimates for solutions to the linearized Monge--Amp\`ere equations},
\textit{Proc. Amer. Math. Soc.}
\textbf{143} (2015), 1605--1615.

\bibitem{Lebedev} N. N. Lebedev,
{\it Special Functions and Their Applications},
revised edition, translated from the Russian and edited by Richard A. Silverman,
Dover, New York, 1972.

\bibitem{MaldonadoJDE14} D. Maldonado,
{Harnack's inequality for solutions to the linearized Monge--Amp\`ere operator with lower-order terms},
\textit{J. Differential Equations}
\textbf{256} (2014), 1987--2022.

\bibitem{MaldonadoCVPDE14} D. Maldonado,
{On the $W^{2, 1+\varepsilon}$-estimates for the Monge--Amp\`ere equation and related real analysis},
\textit{Calc. Var. Partial Differential Equations}
\textbf{50} (2014), 94--114.

\bibitem{MaMRL13} D. Maldonado,
{The Monge--Amp\`ere quasi-metric structure admits a Sobolev inequality},
\textit{Math. Res. Lett.}
\textbf{20} (2013), 527--536.

\bibitem{MaldonadoW1p} D. Maldonado,
{$W^{1,p}_\varphi$-estimates for Green's functions of the linearized Monge--Amp\`ere operator},
\textit{Manuscripta Math.}
\textbf{152} (3) (2017), 539--554.
 

\bibitem{Oddson} J. K. Oddson,
{On the rate of decay of solutions of parabolic differential equations},
\textit{Pacific J. Math.}
\textbf{29} (1969), 389--396.

\bibitem{Pazy} A. Pazy,
\textit{Semigroups of Linear Operators and Applications to Partial Differential Equations},
Applied Mathematical Sciences \textbf{44},
Springer-Verlag, New York, 1983.

\bibitem{Sa80} M. V. Safonov,
{Harnack inequality for elliptic equations and the H\"older property of their solutions},
\textit{J. Soviet Math.}
\textbf{21} (1983), 851--863.

\bibitem{Sa10} O. Savin,
{A Liouville theorem for solutions to the linearized Monge--Amp\`ere equation},
\textit{Discrete Contin. Dyn. Syst.}
\textbf{28} (2010), 865--873.

\bibitem{Stinga Thesis} P. R. Stinga,
{Fractional powers of second order partial differential operators: extension problem and regularity theory},
PhD Thesis, Universidad Aut\'onoma de Madrid, 2010.

\bibitem{Stinga-Torrea CPDE}  P. R. Stinga and J. L. Torrea,
{Extension problem and Harnack's inequality for some fractional operators},
\textit{Comm. Partial Differential Equations}
\textbf{35} (2010), 2092--2122.

\bibitem{Stinga-Zhang} P. R. Stinga and C. Zhang,
{Harnack's inequality for fractional nonlocal equations},
\textit{Discrete Cont. Dyn. Syst.}
\textbf{33} (2013), 3153--3170.

\bibitem{TianWang08} G. Tian and X.-J. Wang,
{A class of Sobolev type inequalities},
\textit{Methods Appl. Anal.}
\textbf{15} (2008), 263--276.

\bibitem{tw02} N. Trudinger and X.-J Wang,
{Affine complete locally convex hypersurfaces},
\textit{Invent. Math.}
\textbf{150} (2002), 45--60.

\bibitem{tw05} N. Trudinger and X.-J Wang,
{The affine Plateau problem},
\textit{J. Amer. Math. Soc.}
\textbf{18} (2005), 253--289.

\bibitem{tw00} N. Trudinger and X.-J. Wang,
{The Bernstein problem for affine maximal hypersurfaces}, 
\textit{Invent. Math.}
\textbf{140} (2000), 399--422.

\bibitem{Yosida} K. Yosida,
\textit{Functional Analysis},
Reprint of the sixth (1980) edition,
Classics in Mathematics,
Springer-Verlag, Berlin, 1995.

\end{thebibliography}
\end{document}